\begin{document}
\newtheorem{thm}{Theorem}[section]
\newtheorem{lem}[thm]{Lemma}
\newtheorem{prop}[thm]{Proposition}
\newtheorem{cor}[thm]{Corollary}
\theoremstyle{definition}
\newtheorem{ex}[thm]{Example}
\newtheorem{rem}[thm]{Remark}
\newtheorem{prob}[thm]{Problem}
\newtheorem{thmA}{Theorem}
\renewcommand{\thethmA}{}
\newtheorem{defi}[thm]{Definition}
\renewcommand{\thedefi}{}
\input amssym.def
\long\def\alert#1{\smallskip{\hskip\parindent\vrule%
\vbox{\advance\hsize-2\parindent\hrule\smallskip\parindent.4\parindent%
\narrower\noindent#1\smallskip\hrule}\vrule\hfill}\smallskip}
\def\ff{\frak}
\def\Spec{\mbox{\rm Spec}}
\def\type{\mbox{ type}}
\def\Hom{\mbox{ Hom}}
\def\rank{\mbox{ rank}}
\def\Ext{\mbox{ Ext}}
\def\Ker{\mbox{ Ker}}
\def\Max{\mbox{\rm Max}}
\def\End{\mbox{\rm End}}
\def\l{\langle\:}
\def\r{\:\rangle}
\def\Rad{\mbox{\rm Rad}}
\def\Zar{\mbox{\rm Zar}}
\def\Supp{\mbox{\rm Supp}}
\def\Rep{\mbox{\rm Rep}}
\def\cal{\mathcal}
\title[On the Chang's group of BL-algebras]{On the Chang's group of BL-algebras}
\thanks{2000 {\it Mathematics Subject Classification.}
Primary 06D99, 08A30}
\thanks{\today}
\author{Clestin Lele and Jean B Nganou}
\address{Department of Mathematics and Computer Science, University of Dschang, Cameroon
} \email{celestinlele@yahoo.com}
\address{Department of Mathematics, University of Oregon, Eugene,
OR 97403} \email{nganou@uoregon.edu}
\begin{abstract} For an arbitrary BL-algebra $L$, we construct an associated lattice Abelian group $G_L$ that coincides with Chang's $\ell$-group when the BL-algebra is an MV-algebra. We prove that the Chang's group of the MV-center of any BL-algebra $L$ is a direct summand in $G_L$. We also compute examples of this group.
 \vspace{0.20in}\\
{\noindent} Key words: BL-algebra, MV-algebra, ideal, MV-center, prime ideal, BL-chain, good sequence, $\ell$-group.
\end{abstract}
\maketitle
\section{Introduction}
On one hand, an MV-algebra is an Abelian monoid $(M,\oplus , 0)$ with an involution $^\ast:M\to M$ (i. e.; $(x^\ast)^\ast=x$ for all $x\in M$) satisfying the following axioms for all $x, y\in M$: $0^\ast\oplus x=0^\ast$;
$(x^\ast\oplus y)^\ast\oplus y=(y^\ast\oplus x)^\ast\oplus x$.
On the other hand, a lattice ordered Abelian group ($\ell$-group) is an Abelian group $(G,+,-,0)$ equipped with a lattice order $\leq$ that is translation invariant. In other words, for all $a,b,c\in G$, if $a\leq b$, then $a+c\leq b+c$.\\
It is known that the category of MV-algebras is equivalent to that of unital lattice ordered groups ($\ell$-groups). This equivalence, which depends in large part on the natural algebraic addition of MV-algebras \cite{CM} has been an essential tool in the study of MV-algebras. One would solve problems of MV-algebras by considering and solving the corresponding problems within lattice ordered groups where more tools are available, and settings are more familiar.  \\
On the other hand, MV-algebras are BL-algebras satisfying the double negation. But, until recently, the essential ingredient (algebraic addition) was still missing within the BL-algebras framework as had observed several authors \cite{a2}, \cite{T3}. Very recently we introduced an algebraic addition in BL-algebras, as natural generalization of the addition in MV-algebras \cite{ln}. \\
Given that a proper algebraic addition is now available in BL-algebras, it is natural to consider the constructions in MV-algebras that rely on the addition. The Chang's $\ell$-group is one of the most important such constructions. We define good sequences in BL-algebras as a natural generalization of good sequences in MV-algebras and show that their set is a commutative monoid under the addition of good sequences. The key step is to prove the associativity of the multiplication of good sequences. To achieve this goal, we use existing properties of MV-chains, the subdirect product representation theorem for BL-algebras, and the decomposition of BL-chains into Wasjberg hoops with bottom hoop being a wasjberg algebra \cite{am, Bu}. In addition, this monoid has a natural structure of a lattice where the suprema and infima are taken component wise. Unlike for MV-algebras, this monoid is no longer cancellative in general. Regardless, one can use the general construction due to Grothendieck of an Abelian group from a commutative monoid. Applying this construction to the monoid of good sequences, we obtain a lattice Abelian group with strong units, which we shall refer to as the Chang's $\ell$-group of the BL-algebra. We also consider the case of BL-algebras of cancellative type, where the monoid of good sequences becomes cancellative and isomorphic to a subgroup of the positive cone of the Chang's group. This construction extends in a natural way the functor $\Xi$ introduced in \cite{CM} to a functor from the category of BL-algebras to that of lattice ordered groups with strong units. Some authors have already investigated possible extensions of the Mundici's functor to more general algebras \cite{EG, GT}. In the present case, the Mundici's equivalence is no longer an equivalence (nearly an adjoint pair), and the failure to be invertible provides a new understanding of the gap between MV-algebras and BL-algebras.\\
Finally, since the algebraic addition in BL-algebras does not behave nearly as nicely as the one in MV-algebras, we have been very cautious (probably over cautious at times) in proving the results most of the time with identical techniques as in \cite{CM}. But, there are also several instances where the techniques used for MV-algebras no longer work, and we have found completely different techniques.

 \section{Preliminaries}   
  A \textit{hoop} is an algebra $\textbf{A}=(A,\otimes, \to, 1)$ such that $(A,\otimes,1)$ is a commutative monoid and for all $x,y,z\in A$:\\
$$
\begin{aligned}
x\to x&=1\\
x\otimes(x\to y)&=y\otimes(y\to x)\\
x\to(y\to z)&=(x\otimes y)\to z
\end{aligned}
$$
It is known \cite[Prop. 2.1]{am} that any hoop $(A,\otimes, \to, 1)$ is a (natural) ordered residuated commutative monoid, where the order is defined by $x\leq y$ iff $x\to y=1$ and the residuation is: $$x\otimes y\leq z \; \text{iff}\; x\leq y\to z$$
In addition, a hoop is a lower semilattice order, where $x\wedge y=x\otimes(x\to y)$.\\
If the order of the hoop $A$ is linear, $A$ is called totally ordered hoop. A \textit{Wasjsberg hoop} is a hoop that satisfies the equation:$(x\to y)\to y=(y\to x)\to x$.\\  If the hoop $A$ has a minimum element $0$ with respect to the order above, then $A$ is called bounded hoop and is denoted by $(A,\to, \otimes, 0, 1)$.\\
A \textit{Wasjberg algebra} is a bounded A Wasjberg hoop.\\
A BL-algebra is a bounded hoop $(A,\to, \otimes, 0, 1)$ satisfying for all $x,y,z\in A$:\\
$(x\to y)\to z\leq ((y\to x)\to z)\to z$.
Every BL-algebra has the complementation operation defined by $\bar{x}=x\rightarrow 0$.\\
A product BL-algebra is a BL-algebra satisfying:$$x\wedge \overline{x}=0 \; \ \ \text{and}\; \ \ \overline{\overline{x}}\to ((x\otimes z\to y\otimes z)\to (x\to y) )=1$$
A complete study of product BL-algebras is found in \cite{C3}.\\
A G\"odel algebra (G-algebra) is a BL-algebra satisfying $x\otimes x=x$.\\
The following axioms of BL-algebras that shall be needed can be found  \cite{H0},\cite{H2},\cite{T1}, \cite{T2}.
\begin{align} 
& x\leq y \; \text{iff}\; x\rightarrow y=1; x\otimes y \leq x\wedge y;\\
& x\rightarrow(y\rightarrow z)=(x\otimes y)\rightarrow z;\\
& x\rightarrow (y\rightarrow z)=y\rightarrow (x\rightarrow z);\\
& \text{If}\; x\leq y, \; \text{then}\; y\rightarrow z\leq x\rightarrow z\; \text{and}\; z\rightarrow x\leq z\rightarrow y;\\
& x\leq y\rightarrow (x\otimes y);  x\otimes (x\rightarrow y)\leq y;\\
& 1\rightarrow x=x; x\rightarrow x=1; x\rightarrow 1=1; x\leq y\rightarrow x, x\leq \bar{\bar{x}}, \bar{\bar{\bar{x}}}=\bar{x};\\
& x\otimes \bar{x}= 0; x\otimes y=0\; \text{iff}\;   x\leq \bar{y};\\
& x\leq y\; \text{implies}\;  x\otimes z \leq y\otimes z,  z \rightarrow x \leq  z \rightarrow y, y \rightarrow z \leq  x \rightarrow z, \bar{y} \leq \bar{x} ;\\
& \overline{x\otimes y}  =  x \rightarrow \bar{y}; \overline{x \wedge y}  = \bar{x} \vee \bar{y},\overline{x \vee y}  = \bar{x} \wedge \bar{y}, \bar{0}=1 \; \text{and}\; \bar{1}=0;\\
& \overline{\overline{x\rightarrow y}}= \bar{\bar{x}}  \rightarrow  \bar{\bar{y}},  \overline{\overline{x \wedge y}}= \bar{\bar{x}}  \wedge   \bar{\bar{y}},  \overline{\overline{x\vee y}}= \bar{\bar{x}}   \vee    \bar{\bar{y}}, \bar{\bar{x}}\otimes\bar{\bar{y}}=\overline{\overline{x \otimes y}};\\
& x\otimes (y\vee z)= (x\otimes y)\vee (x\otimes z),  x\otimes (y\wedge z)= (x\otimes y)\wedge (x\otimes z);\\
& x\rightarrow (y\wedge z)= (x\rightarrow y)\wedge (x\rightarrow z); x\rightarrow (y\vee z)= (x\rightarrow y)\vee (x\rightarrow z).
\end{align}
An MV-algebra is a BL-algebra satisfying the double negation, that is $\bar{\bar{x}}=x$. It is known that this is equivalent to being a Wasjberg algebra.\\
In the literature, MV-algebras are also defined as Abelian monoids $(M,\oplus , 0)$ with an involution $^\ast:M\to M$ (i. e.; $(x^\ast)^\ast=x$ for all $x\in M$) satisfying the following axioms for all $x, y\in M$: $0^\ast\oplus x=0^\ast$; $(x^\ast\oplus y)^\ast\oplus y=(y^\ast\oplus x)^\ast\oplus x$.\\
 The two views are known to be equivalent. In fact, starting with a BL-algebra $M$ satisfying the double negation, if one writes $x\oplus y:=\bar{x}\to y$ and $x^\ast:=\bar{x}$, then $(M, \oplus, ^\ast, 0)$ satisfies the definition above. Conversely, given $(M, \oplus, ^\ast, 0)$ is an MV-algebra, if one defines $x\otimes y=(x^\ast\oplus y^\ast)^\ast$; $x\rightarrow y=x^\ast \oplus y$; $x\wedge y=x\otimes (y\oplus x^\ast)$; $x\vee y=x\oplus (y\otimes x^\ast)$; $x^\ast=\bar{x}$ and $1=\bar{0}$ where $\bar{x}=x\rightarrow 0$. Then $(M,\wedge, \vee, \otimes, \rightarrow, 0, 1)$ is a BL-algebra satisfying the double negation.\\
 Therefore in a Wasjberg algebra, $x\oplus y$ will denote the element $\bar{x}\to y$.\\
For any BL-algebra $L$, the subset  $ MV(L) =\{\bar{x}, x \in L \}$ is the largest MV-sub algebra of  $L$  and is called the MV-center of  $L$ \cite{T3}. The addition in the MV-center is defined by $\bar{x}\oplus \bar{y} = \overline{x \otimes y}$ for any $ \bar{x}, \bar{y} \in MV(L)$. A detailed treatment of the MV-center is found in \cite{T3}.\\ 
We recall that  for any subset  $X $ of a BL-algebra  $L$, 
 $ \overline{X}= \{ \bar{x},  x \in X \}$.\\
 Recall the definition of ordinal sums of hoops.\\
 Let $(I,\leq)$ be a totally ordered set. For each $i\in I$, let $\textbf{A}_i=(A_i,\otimes_i,\to _i, 1)$ be a hoop such that for every $i\ne j$, $A_i\cap A_j=\{1\}$. Then, the ordinal sum of the family $(\textbf{A}_i)_{i\in I}$ is the hoop $\bigoplus_{i\in I}\textbf{A}_i=(\cup_{i\in I}A_i, \otimes, \to, 1)$ where the operations $\otimes$ and $\to$ are defined by:
 $$
 \begin{aligned}
 x\otimes y =
\left\{\begin{array}{ll}
  x\otimes_i y & \ \ \mbox{if} \ \ x,y\in A_i;\\
 x&  \ \ \mbox{if} \ \ x\in A_i\setminus\{1\}, \; y\in A_j\; \mbox{and}\; i<j;\\
  y&  \ \ \mbox{if} \ \ y\in A_i\setminus\{1\}, \; x\in A_j\; \mbox{and}\; i<j.\\
\end{array}\right.\\
 x\to y =
\left\{\begin{array}{ll}
  x\to_i y & \ \ \mbox{if} \ \ x,y\in A_i;\\
 1&  \ \ \mbox{if} \ \ x\in A_i\setminus\{1\}, \; y\in A_j\; \mbox{and}\; i<j;\\
  y&  \ \ \mbox{if} \ \ y\in A_i, \; x\in A_j\; \mbox{and}\; i<j.\\
\end{array}\right.\\
 \end{aligned}
 $$
 A \textit{tower} of totally order Wasjberg hoops is a family $\tau=(C_i:i\in I)$ indexed by a totally ordered set $(I,\leq )$ with minimum $0$ such that for each $i\in I$
 $$\textbf{C}_i=(C_i,\otimes_i,\to _i, 1)$$
 is a totally ordered Wasjberg hoop, with $C_i\cap C_j=\{1\}$ for $i\ne j$ and $\textbf{C}_0$ is a Wasjberg algebra (MV-algebra). \\
 For every tower of totally order Wasjberg hoops is a family $\tau=(C_i:i\in I)$, $\textbf{A}_\tau$ denotes the ordinal sum of the family $\tau$.\\
 The following result which is found in  \cite[Theorem 3.7]{am}, or \cite[Theorem 3.4]{Bu} is the most important result on the structure of totally ordered BL-algebras (BL-chains).
 \begin{thm}\label{bl-chain}
 Every BL-chain $\textbf{A}$ is isomorphic to an algebra of the form $\textbf{A}_\tau$ for some tower $\tau$ of totally order Wasjberg hoops.
 \end{thm}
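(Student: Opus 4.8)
The plan is to prove the structure theorem by analyzing the hoop structure of a BL-chain through its idempotent-like decomposition. The key idea is that a BL-chain, being totally ordered, carries a natural equivalence relation whose classes are Wasjberg hoops, and these classes are themselves totally ordered, yielding the ordinal sum decomposition. I would start by defining, for a BL-chain $\textbf{A}$, a relation that identifies elements lying in the ``same level'' of the ordinal structure. A standard choice is to set $x \sim y$ iff the monoid operation $\otimes$ behaves non-trivially between them—concretely, iff neither $x \to y$ nor $y \to x$ collapses to an endpoint in a degenerate way. The first step is to verify this is a congruence-like partition whose blocks $C_i$ inherit a totally ordered Wasjberg hoop structure from $\textbf{A}$.

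**Next I would** establish that the index set $I$ of blocks is totally ordered with a minimum, and that the bottom block $\textbf{C}_0$ containing $0$ is a bounded Wasjberg hoop, i.e.\ a Wasjberg algebra. Since $\textbf{A}$ is a BL-chain, its order restricts to a total order on $I$ via $i \leq j$ iff every element of $C_i \setminus \{1\}$ lies below every element of $C_j \setminus \{1\}$. The block $C_0$ containing the bottom element $0$ is forced to be the minimum. To see each block is a Wasjberg hoop, I would use the defining BL-algebra inequality $(x\to y)\to z \leq ((y\to x)\to z)\to z$ together with the hoop axioms; restricted within a single block, this inequality should force the Wasjberg identity $(x\to y)\to y = (y\to x)\to x$, using the totality of the order to turn the inequality into an equality. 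The boundedness of $\textbf{C}_0$ follows because $0 \in C_0$ acts as the global minimum.

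**The main obstacle will be** showing that the operations $\otimes$ and $\to$ of $\textbf{A}$, when compared across two distinct blocks $C_i$ and $C_j$ with $i < j$, reduce exactly to the ordinal-sum formulas: that $x \otimes y = x$ and $x \to y = 1$ and $y \to x = y$ for $x \in C_i \setminus \{1\}$, $y \in C_j$. This requires exploiting the BL-chain axioms to prove that lower-level elements absorb higher-level ones under $\otimes$ and that the residuum collapses appropriately. The delicate point is ruling out intermediate behavior—one must show no element of a higher block can be ``partially divided'' by an element of a lower block—which relies crucially on divisibility ($x \otimes (x \to y) = x \wedge y$), prelinearity, and the total order. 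Once these cross-block computations match the ordinal sum definition, the map sending $\textbf{A}$ to $\textbf{A}_\tau$ with $\tau = (C_i : i \in I)$ is verified to be an isomorphism of BL-algebras, completing the proof. I would rely on the residuation and the lattice identities $(9)$--$(13)$ throughout to handle the routine verifications, reserving the genuine work for the cross-block absorption laws.
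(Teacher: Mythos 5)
First, note that the paper does not prove this statement at all: Theorem \ref{bl-chain} is quoted from \cite[Theorem 3.7]{am} and \cite[Theorem 3.4]{Bu}, so there is no in-paper proof to compare against, and your outline has to be measured against those sources. Its overall architecture does match theirs --- partition the chain into blocks, show each block is a totally ordered Wasjberg hoop, show the blocks are order-convex and totally ordered among themselves, and verify that the cross-block operations are the ordinal-sum ones --- but as written every genuinely hard step is asserted rather than carried out. The most serious gap is that your equivalence relation is never actually defined: ``neither $x\to y$ nor $y\to x$ collapses to an endpoint in a degenerate way'' is not a condition one can compute with. The relation that works is, for $x\leq y$, $x\sim y$ iff $(y\to x)\to x=y$; in the ordinal sum this holds exactly when $x$ and $y$ lie in a common component, since for $x$ strictly below the component of $y$ the definition of $\to$ gives $y\to x=x$ and hence $(y\to x)\to x=1\neq y$ when $y<1$. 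Proving that this relation is transitive and that its classes are closed under $\otimes$ and $\to$ is the crux of the arguments in \cite{am} and \cite{Bu} and occupies most of those proofs; your outline does not engage with it.

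Two further points. Your second step has the logic backwards: the Wasjberg identity $(x\to y)\to y=(y\to x)\to x$ inside a block is not something you derive from prelinearity after the blocks are formed --- in a chain with $x\leq y$ it reduces to $(y\to x)\to x=y$, which is precisely the membership condition above, so it holds essentially by construction; the real content is the closure of the blocks under the operations and their convexity (which is what makes the index set $I$ totally ordered, a fact you assume rather than prove). And the step you correctly identify as ``the main obstacle,'' namely the absorption laws $x\otimes y=x$, $x\to y=1$, $y\to x=x$ for $x$ in a strictly lower block than $y$, is exactly where divisibility and the total order must be deployed, but you give no argument for it. So the proposal is a reasonable roadmap that coincides with the known proof strategy, but it is not yet a proof: the definition, transitivity and closure of the block relation and the cross-block computations are missing, and those steps are the theorem.
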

 We also recall the following representation theorem for BL-algebras.
 \begin{thm}\label{bl=s}\cite[Lemma 2.3.16]{H2}
 Every BL-algebra is a sub-direct product of BL-chains.
 \end{thm}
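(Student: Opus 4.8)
The plan is to realize this as a concrete instance of Birkhoff's subdirect representation theorem, carried out through the \emph{implicative filters} of a BL-algebra so that the subdirect factors are forced to be chains. Recall that a filter of $L$ is a subset $F\ni 1$ closed under $\otimes$ and satisfying: $x\in F$ and $x\to y\in F$ imply $y\in F$. Filters are in bijection with congruences via $x\equiv y\iff (x\to y)\wedge(y\to x)\in F$, each quotient $L/F$ is again a BL-algebra, and the induced order satisfies $[x]\leq[y]$ in $L/F$ if and only if $x\to y\in F$.

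First I would isolate the filters whose quotient is linearly ordered. Call a filter $P$ \emph{prime} if for all $x,y\in L$ one has $x\to y\in P$ or $y\to x\in P$. Using the description of the order on $L/P$ just recalled, $P$ is prime precisely when any two classes are comparable, i.e. precisely when $L/P$ is a BL-chain. Thus it suffices to produce enough prime filters to separate points.

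The core of the argument is the following separation lemma: for each $a\in L$ with $a\neq 1$ there is a prime filter $P$ with $a\notin P$. I would prove this by Zorn's lemma. The poset of filters not containing $a$ is nonempty (it contains $\{1\}$) and closed under unions of chains, hence has a maximal element $P$; the work is to check $P$ is prime. If not, then $x\to y\notin P$ and $y\to x\notin P$ for some $x,y$, so by maximality the filters generated by $P\cup\{x\to y\}$ and by $P\cup\{y\to x\}$ both contain $a$. Writing $u=x\to y$ and $v=y\to x$, this gives $p_{1}\otimes u^{m}\leq a$ and $p_{2}\otimes v^{n}\leq a$ for some $p_{1},p_{2}\in P$ and integers $m,n$. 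I would then invoke the prelinearity identity $u\vee v=1$ (valid in every BL-algebra) together with the distributive law $x\otimes(y\vee z)=(x\otimes y)\vee(x\otimes z)$ of (11): a short computation shows $u^{m}\vee v^{n}=1$, and setting $p=p_{1}\otimes p_{2}\in P$ one gets $p=p\otimes(u^{m}\vee v^{n})=(p\otimes u^{m})\vee(p\otimes v^{n})\leq a$, forcing $a\in P$ and contradicting $a\notin P$. This passage, pushing the two containments through prelinearity and the $\otimes$-over-$\vee$ distributivity, is the step I expect to be the main obstacle.

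Finally I would assemble the representation. Given $a\neq b$ in $L$, at least one of $a\to b$, $b\to a$ differs from $1$; applying the separation lemma to that element yields a prime filter $P$ with $[a]\neq[b]$ in $L/P$. Consequently the intersection of all prime filters is $\{1\}$, so the diagonal homomorphism $L\to\prod_{P}L/P$, with $P$ ranging over all prime filters, is injective. Each projection onto $L/P$ is surjective and, by the second step, each $L/P$ is a BL-chain; hence this map exhibits $L$ as a subdirect product of BL-chains.
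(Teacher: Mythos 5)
Your proof is correct and is essentially the standard argument that the paper itself does not reproduce but simply cites from H\'ajek (Lemma 2.3.16): separate points by prime filters obtained via Zorn's lemma, using prelinearity $(x\to y)\vee(y\to x)=1$ together with the distributivity of $\otimes$ over $\vee$ to show that a filter maximal among those avoiding a given $a\neq 1$ is prime. All the steps, including the key computation $p\otimes(u^{m}\vee v^{n})=(p\otimes u^{m})\vee(p\otimes v^{n})\leq a$, check out.
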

 \begin{rem}\label{equation}
 It follows from Theorem \ref{bl=s} that an equation holds in every BL-algebra if and only if its holds in every BL-chain.
 \end{rem}
  We also recall some basic facts about the algebraic addition treated in \cite{cn, ln}\\
 For every $x,y\in L $, we recall that $$x  \oslash y:= \bar{x}\rightarrow y $$ We can observe that if the BL-algebra is an MV-algebra, the operation $\oslash $ and $\oplus $ are the same. In particular, in the MV-center of $L $, $\oslash $ coincides with  $\oplus .$
\begin{lem}\label{pseudo} In every BL-algebra $L$.
\begin{itemize}
\item[(i).] The operation $\oslash $ is associative. That is for every $x,y, z \in L $,  $ (x  \oslash y) \oslash z = x  \oslash (y \oslash z)$. 
\item[(ii).] The operation $\oslash $ is compatible with the BL-order. That is for every $x,y, z, t \in L $,  such that   $ x \leq  y$ and $ z \leq t$, then  $ x\oslash z \leq  y\oslash t$.
\item[(iii).] The operation $\oslash$ distributes over $\vee$ and $\wedge$. That is for every $x,y,z\in L$, $ x  \oslash (y \vee z) = (x  \oslash  y)\vee (x \oslash z) $; $(x \vee y)  \oslash z= (x  \oslash  z)\vee (y \oslash z) $; $ x  \oslash (y \wedge z) = (x  \oslash  y)\wedge (x \oslash z) $; $(x \wedge y)  \oslash z= (x  \oslash  z)\wedge (y \oslash z) $
\item[(iv).] For every $x,y, z \in L $, $ x  \oslash (y \oslash z) = y  \oslash (x \oslash z) $. 
\end{itemize}
  \end{lem}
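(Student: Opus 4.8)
The plan is to unfold every claim through the definition $x\oslash y=\bar{x}\to y$ and then read it off from the axioms (1)--(12) recorded above, isolating associativity as the only step that needs a genuine idea.

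Statements (ii) and (iv) are immediate. For (ii): from $x\le y$ and axiom (8) we get $\bar{y}\le\bar{x}$, so antitonicity of $\to$ in the first slot (axiom (4)) gives $\bar{x}\to z\le\bar{y}\to z$, while $z\le t$ together with monotonicity of $\to$ in the second slot (again (4)) gives $\bar{y}\to z\le\bar{y}\to t$; chaining these, $x\oslash z=\bar{x}\to z\le\bar{y}\to z\le\bar{y}\to t=y\oslash t$. For (iv): $x\oslash(y\oslash z)=\bar{x}\to(\bar{y}\to z)$ and $y\oslash(x\oslash z)=\bar{y}\to(\bar{x}\to z)$ coincide by the exchange identity (3) applied to $\bar{x},\bar{y},z$.

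For (iii), the two identities with $\vee,\wedge$ in the second argument follow directly from the distributivity of $\to$ over $\vee$ and $\wedge$ (axiom (12)): $x\oslash(y\vee z)=\bar{x}\to(y\vee z)=(\bar{x}\to y)\vee(\bar{x}\to z)$, and likewise for $\wedge$. For the two identities with $\vee,\wedge$ in the first argument, I would first rewrite the antecedent by De Morgan (axiom (9)), namely $(x\vee y)\oslash z=(\bar{x}\wedge\bar{y})\to z$ and $(x\wedge y)\oslash z=(\bar{x}\vee\bar{y})\to z$. The desired equalities then concern only how $\to$ interacts with $\wedge,\vee$ in its \emph{first} slot, and these are not among the listed axioms; so I would invoke Remark \ref{equation} and verify them in an arbitrary BL-chain, where $\{\bar{x},\bar{y}\}$ is linearly ordered and each identity collapses to a triviality (the smaller of $\bar{x},\bar{y}$ produces the larger implication).

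The crux is associativity (i). Applying axiom (2) on the outer product and once more on the inner term gives $x\oslash(y\oslash z)=\bar{x}\to(\bar{y}\to z)=(\bar{x}\otimes\bar{y})\to z$, whereas $(x\oslash y)\oslash z=\overline{\bar{x}\to y}\to z$; hence (i) is equivalent to the single identity $\overline{\bar{x}\to y}=\bar{x}\otimes\bar{y}$. This identity is the main obstacle, and I would derive it from the more symmetric $\overline{a\to b}=\bar{\bar{a}}\otimes\bar{b}$ (which yields the claim upon setting $a=\bar{x},\,b=y$ and using $\bar{\bar{\bar{x}}}=\bar{x}$ from (6)). To prove the latter, combine $\overline{\overline{a\to b}}=\bar{\bar{a}}\to\bar{\bar{b}}$ from (10) with $\overline{\bar{\bar{a}}\otimes\bar{b}}=\bar{\bar{a}}\to\overline{\bar{b}}=\bar{\bar{a}}\to\bar{\bar{b}}$ from (9), obtaining $\overline{\overline{a\to b}}=\overline{\bar{\bar{a}}\otimes\bar{b}}$; then apply one more complementation and simplify both sides by (6) --- on the left via $\bar{\bar{\bar{u}}}=\bar{u}$, on the right via $\overline{\overline{\bar{\bar{a}}\otimes\bar{b}}}=\bar{\bar{a}}\otimes\bar{b}$, which is the last identity of (10) --- to conclude $\overline{a\to b}=\bar{\bar{a}}\otimes\bar{b}$. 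Alternatively, this identity can be checked componentwise in each BL-chain via Theorem \ref{bl-chain}, using that every complement lands in the bottom Wasjberg algebra $\mathbf{C}_0$, so only a handful of cases according to the components of $x,y,z$ arise; but the direct axiomatic route above is shorter. With the identity in hand, (i) follows at once.
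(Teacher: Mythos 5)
Your proof is correct, but there is nothing in this paper to compare it against: the authors state Lemma \ref{pseudo} as a recollection of ``basic facts about the algebraic addition treated in \cite{cn, ln}'' and give no proof here, deferring entirely to the companion paper on algebraic addition. Taken on its own terms, your argument is sound and self-contained. Parts (ii) and (iv) do follow immediately from axioms (4), (8) and the exchange law (3); the second-slot distributivities in (iii) are exactly axiom (12), and your reduction of the first-slot ones via De Morgan (9) to the identities $(a\wedge b)\to z=(a\to z)\vee(b\to z)$ and $(a\vee b)\to z=(a\to z)\wedge(b\to z)$, checked on chains and lifted by Remark \ref{equation}, is a standard and valid move (the second of these is even a general residuation fact). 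The real content is your isolation of associativity as equivalent to the single identity $\overline{\bar{x}\to y}=\bar{x}\otimes\bar{y}$, and your derivation of $\overline{a\to b}=\bar{\bar{a}}\otimes\bar{b}$ from (9), (10) and $\bar{\bar{\bar{u}}}=\bar{u}$ is correct: both sides have equal double complements, and both are fixed by double complementation (the left by (6), the right by the last identity of (10)). This purely equational route is arguably cleaner than a case analysis over the ordinal-sum decomposition, and it makes explicit a useful identity that the paper never records.
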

 Since the pseudo-addition $\oslash$ is not commutative and lacks an identity, we can construct a natural commutative operation $+$ as follows: 
 For every $x,y \in L $,  $$ x + y := ( x\oslash y)\wedge (y \oslash x)\ \ \ \ \ \ \ \ \ \ \ \ \ \ \ (\ast) $$
 Whenever $+$ is used in this work, it should be referring to this definition.\\
 The following lemma summarizes the main properties of the addition $+$.
 \begin{lem} In every BL-algebra $L$,
 \begin{itemize}
 \item[(i).] The operation $+ $ is associative. That is for every $x,y, z \in L $,  $ (x +  y)+  z = x  + (y + z) $. 
 \item[(ii).] The operation $+ $ is compatible with the BL-order. That is for every $x,y, z, t \in L $,  such that   $ x \leq  y$ and $ z \leq t$, then  $ x +z \leq  y + t $.
 \item[(iii).]  $0$ is the identity for $+$. That is, for every $x\in L$, $x+0=x$.
 \item[(iv).] $1$ is an absorbing element for $+$. That is $x+1=1$, for every $x\in L$.
\item[(v).]  For every $x,y\in L$, $x,y\leq x+y$.
\item[(vi).] For every $x\in L$, $\overline{\overline{x+\overline{x}}}=1$.
\item[(vii).] For every $x,y\in L$, $x+y=1$ implies $\bar{x}\leq y$.
\item[(viii).] For every $x, y\in L$, $\overline{\overline{x+y}}=\overline{\overline{x}}+\overline{\overline{y}}$.
\end{itemize}
 \end{lem}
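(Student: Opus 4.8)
The plan is to verify each item directly from the defining equation $(\ast)$, using the properties of $\oslash$ collected in Lemma \ref{pseudo} together with the BL-algebra identities recalled above; the only item requiring genuine care is the associativity (i), which I would treat last. For the elementary items I would argue as follows. For (iii), I compute $x\oslash 0=\bar x\to 0=\bar{\bar x}$ and $0\oslash x=\bar 0\to x=1\to x=x$, so that $x+0=\bar{\bar x}\wedge x=x$ since $x\leq\bar{\bar x}$. For (iv), $x\oslash 1=\bar x\to 1=1$ and $1\oslash x=\bar 1\to x=0\to x=1$, whence $x+1=1\wedge 1=1$. For (v), residuation gives $x\leq\bar x\to y$ (equivalently $x\otimes\bar x=0\leq y$), while $x\leq\bar y\to x$ is an instance of $x\leq y\to x$; hence $x\leq(x\oslash y)\wedge(y\oslash x)=x+y$, and the bound on $y$ is symmetric. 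For (ii), Lemma \ref{pseudo}(ii) applied to each factor gives $x\oslash z\leq y\oslash t$ and $z\oslash x\leq t\oslash y$, and taking meets preserves the inequality. For (vii), $x+y=1$ forces both $x\oslash y=1$ and $y\oslash x=1$ (a meet equal to $1$ forces each factor to equal $1$), and $x\oslash y=\bar x\to y=1$ says precisely $\bar x\leq y$.

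The complementation items rely on the double-negation identities. For (vi) I note $x\oslash\bar x=\bar x\to\bar x=1$, so $x+\bar x=\bar{\bar x}\to x$, and then $\overline{\overline{\bar{\bar x}\to x}}=\bar{\bar x}\to\bar{\bar x}=1$ using $\overline{\overline{u\to v}}=\bar{\bar u}\to\bar{\bar v}$ and $\bar{\bar{\bar x}}=\bar x$. For (viii) I expand $x+y=(\bar x\to y)\wedge(\bar y\to x)$, push $\overline{\overline{(\,\cdot\,)}}$ through $\wedge$ and through $\to$ via the stated identities, and use $\bar{\bar{\bar x}}=\bar x$ to rewrite the result as $(\bar{\bar x}\oslash\bar{\bar y})\wedge(\bar{\bar y}\oslash\bar{\bar x})=\bar{\bar x}+\bar{\bar y}$.

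The heart of the argument is associativity (i), and the strategy is to expand both $(x+y)+z$ and $x+(y+z)$ and show they collapse to the same meet. Using the distributivity of $\oslash$ over $\wedge$ (Lemma \ref{pseudo}(iii)), the associativity of $\oslash$ (Lemma \ref{pseudo}(i)), and the exchange identity $u\oslash(v\oslash w)=v\oslash(u\oslash w)$ (Lemma \ref{pseudo}(iv)), I would first simplify $(x+y)\oslash z=x\oslash(y\oslash z)$ and $z\oslash(x+y)=\bigl(z\oslash(x\oslash y)\bigr)\wedge\bigl(z\oslash(y\oslash x)\bigr)$, obtaining
\[(x+y)+z=\bigl(x\oslash(y\oslash z)\bigr)\wedge\bigl(z\oslash(x\oslash y)\bigr)\wedge\bigl(z\oslash(y\oslash x)\bigr).\]
A symmetric computation gives
\[x+(y+z)=\bigl(x\oslash(y\oslash z)\bigr)\wedge\bigl(x\oslash(z\oslash y)\bigr)\wedge\bigl(z\oslash(y\oslash x)\bigr).\]
These two expressions share two of their three factors, and the remaining factors coincide by the exchange identity, since $z\oslash(x\oslash y)=x\oslash(z\oslash y)$ by Lemma \ref{pseudo}(iv). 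Hence $(x+y)+z=x+(y+z)$.

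I expect the main obstacle to be the bookkeeping in (i): one must track carefully which $\oslash$-terms collapse under the associativity and exchange identities so that the two triple meets match exactly. Once Lemma \ref{pseudo} is in hand, every other item reduces to a short direct computation with the defining equation and the complementation axioms, so the proof is otherwise essentially mechanical.
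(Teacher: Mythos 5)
Your proof is correct, but there is nothing in the paper to compare it against: the lemma is merely \emph{recalled} there (the authors defer its proof to their earlier papers [cn, ln]), so you have in effect supplied the missing argument. All of the short items check out: (iii) uses $x\oslash 0=\bar{\bar{x}}$, $0\oslash x=x$ and $x\leq\bar{\bar{x}}$; (iv), (v), (ii), (vii) are exactly the one-line computations you give; (vi) and (viii) follow from $\overline{\overline{u\to v}}=\bar{\bar{u}}\to\bar{\bar{v}}$, $\overline{\overline{u\wedge v}}=\bar{\bar{u}}\wedge\bar{\bar{v}}$ and $\bar{\bar{\bar{x}}}=\bar{x}$ as you say. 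The only genuinely delicate point is (i), and your bookkeeping is right: distributing $\oslash$ over $\wedge$ in each argument and using associativity of $\oslash$ collapses $(x+y)\oslash z$ to the single term $x\oslash(y\oslash z)$ (its two factors coincide via the exchange law), and likewise $(y+z)\oslash x$ collapses to $z\oslash(y\oslash x)$, so both triple meets share two factors and the remaining pair $z\oslash(x\oslash y)$ and $x\oslash(z\oslash y)$ agree by Lemma~\ref{pseudo}(iv). One stylistic remark: an alternative route, in the spirit of how the paper handles later identities, would be to invoke Remark~\ref{equation} and verify associativity only on BL-chains via Proposition~\ref{ad-ord}, where $+$ is essentially the MV-sum on $C_0$ truncated at $1$; your equational proof is more self-contained since it does not rely on the subdirect representation, and it works uniformly for all eight items.
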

 We shall also use some basic facts about lattice ordered groups, and for the convenience of the reader, we recall those facts here.\\
 A partially ordered group (po-group) is a group $(G,\cdot ,^{-1},1)$ equipped with a partial order $\leq$ that is translation invariant. In other words, for all $a,b,c\in G$, if $a\leq b$, then $a\cdot c\leq b\cdot c$. A subset $S$ of a po-group is called convex if for all $a,b,c\in G$ such that $a\leq b\leq c$ and $a,c\in S$, then $b\in S$. The positive cone of $G$, usually denoted by $G^{+}$ is defined as the subset of $G$ of all elements $x$ such that $0\leq x$. If $G$ is a po-group whose order defines a lattice structure, then $G$ is called a lattice-ordered group or $\ell$-group.  An $\ell$-subgroup of an $\ell$-group $G$ is a subgroup that is also a sublattice. An ideal of an $\ell$-group $G$ is a normal convex $\ell$-subgroup of $G$.\\
 The only $\ell$-groups that we shall deal with are Abelian, therefore we will use the additive notation $(G,+,-,0)$.
 Given an Abelian $\ell$-group $G$, an element $u\in G^{+}$ is called a strong unit if for all $x\in G$, there exists an integer $n\geq 1$, such that $x\leq nu$.\\
 We shall also use the following traditional notations: given $a\in G$, $a^+=a\vee 0$, $a^-=-a\vee 0$, in particular $a^+,a^-\in G^+$.\\
 The following result characterizes $\ell$-groups among all partially ordered abelian groups.
 \begin{prop}\label{lat-group}\cite[Prop. 3.3]{D}
 Let $(G,+,\leq, -, 0)$ be a partially ordered abelian group. Then $G$ is an $\ell$-group if and only for every $a\in G$, $a\vee 0$ exists in $G$.
 \end{prop}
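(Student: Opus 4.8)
The plan is to prove the two implications separately, with essentially all of the work lying in the converse direction. The forward implication is immediate: if $G$ is an $\ell$-group, then by definition its order is a lattice, so every pair of elements has a supremum and in particular $a\vee 0$ exists for each $a\in G$.

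For the converse, I assume that $a\vee 0$ exists for every $a$ and aim to produce $a\vee b$ for an arbitrary pair. The crucial observation is that translation invariance of the order forces translation invariance of suprema: whenever $s=\sup\{x,y\}$ exists and $c\in G$ is arbitrary, then $s+c=\sup\{x+c,y+c\}$. I would verify this directly; the element $s+c$ dominates both $x+c$ and $y+c$ by translation invariance, and for any upper bound $t$ of $\{x+c,y+c\}$ one translates by $-c$ to get $t-c\geq x$ and $t-c\geq y$, hence $t-c\geq s$ and so $t\geq s+c$. With this lemma in hand, the supremum of an arbitrary pair reduces to a supremum with $0$: translating the element $(a-b)\vee 0$ by $b$ gives
$$a\vee b=\bigl((a-b)\vee 0\bigr)+b,$$
which exists by hypothesis.

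It then remains to produce infima, and here I would use that negation is an order-reversing bijection of $G$. Since $x\leq y$ if and only if $-y\leq -x$, the map $a\mapsto -a$ interchanges suprema and infima; concretely $a\wedge b=-\bigl((-a)\vee(-b)\bigr)$, and the supremum on the right exists by the previous paragraph. Thus every pair $\{a,b\}$ has both a supremum and an infimum, so the partial order on $G$ is a lattice; combined with the translation invariance assumed from the outset, this is exactly the assertion that $G$ is an $\ell$-group.

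I do not expect a genuine obstacle in this argument. The only point requiring care is the verification that translation preserves suprema, since that is precisely the mechanism that upgrades the single hypothesis that $a\vee 0$ exists into the existence of all binary suprema, after which the passage to infima via negation and the conclusion are routine.
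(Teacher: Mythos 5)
Your proposal is correct and follows the same route as the paper's own proof: the forward direction is immediate, and the converse uses the identities $a\vee b=((a-b)\vee 0)+b$ and $a\wedge b=-\bigl((-a)\vee(-b)\bigr)$, which is exactly what the paper does (you merely spell out the translation-invariance-of-suprema step that the paper leaves implicit).
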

 \begin{proof}
 In fact suppose $(G,+,\leq, -, 0)$ is partially ordered abelian group such that for every $a\in G$, $a\vee 0$ exists. Then for every $a, b\in G$, it is easy to see that $a\vee b$ exists and $a\vee b=((a-b)\vee 0)+b$. In addition $a\wedge b$ exists and $a\wedge b=-(-a\vee -b)$.
 \end{proof}
For details on lattice ordered groups, \cite{D} offers a complete treatment on the topic.
 \section{Addition and BL-chains}
We start by the following result provides a complete description of the addition in ordinal sums of tower of totally order Wasjberg hoops. 
 \begin{prop}\label{ad-ord}
Let $\tau=(C_i:i\in I)$ be a tower of totally order Wasjberg hoops. Then for every $x,y\in \textbf{A}_\tau$ with $x\ne 0$ and $y\ne 0$;
 $$
 \begin{aligned}
 x+ y =
\left\{\begin{array}{ll}
  x\oplus_0 y & \ \ \mbox{if} \ \ x,y\in C_0;\\
 1&   \ \  \mbox{otherwise}.\\
\end{array}\right.\\
 \end{aligned}
 $$
 \end{prop}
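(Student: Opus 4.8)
The plan is to reduce the statement to an explicit computation of the complementation and of the pseudo-addition $\oslash$ inside the ordinal sum $\mathbf{A}_\tau$, using only the defining tables for $\otimes$ and $\to$ together with the identities $x\oslash y=\bar x\to y$ and $x+y=(x\oslash y)\wedge(y\oslash x)$.

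First I would pin down the complementation $\bar x=x\to 0$. Since $\mathbf{C}_0$ is the bottom hoop, it carries the global minimum $0$, and its index is strictly below that of every other $\mathbf{C}_i$. Feeding this into the ordinal-sum formula for $\to$ yields two regimes: if $x\in C_0$ then $\bar x=x\to_0 0\in C_0$, whereas if $x\in C_i\setminus\{1\}$ with $i>0$ then, because $0$ lies in the strictly lower component, the rule ``$x\to y=y$ when $y\in A_i,\ x\in A_j,\ i<j$'' applies and gives $\bar x=0$. In particular $\bar x=0$ for every nonunit outside $C_0$ (and for $x=1$), while for $x\in C_0\setminus\{0\}$ one has $\bar x\in C_0\setminus\{1\}$.

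With this in hand I would split into the cases of the statement. If $x,y\in C_0$, then $\oslash$ restricted to $C_0$ uses only $\to_0$, so $x\oslash y=(x\to_0 0)\to_0 y=x\oplus_0 y$; commutativity of $\oplus_0$ in the Wasjberg algebra $\mathbf{C}_0$ then collapses the meet, giving $x+y=(x\oplus_0 y)\wedge(y\oplus_0 x)=x\oplus_0 y$. If $x$ and $y$ are not both in $C_0$, I would instead show that each of $x\oslash y$ and $y\oslash x$ equals $1$. When one argument, say $x$, lies in $C_0\setminus\{0\}$ and $y\in C_j\setminus\{1\}$ with $j>0$, then $\bar x\in C_0\setminus\{1\}$ sits in a strictly lower component than $y$, so the rule ``$x\to y=1$ when $x\in A_i\setminus\{1\},\ y\in A_j,\ i<j$'' forces $x\oslash y=\bar x\to y=1$; and since $\bar y=0$ we get $y\oslash x=0\to x=1$. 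When both $x,y$ lie outside $C_0$ (nonzero and $\ne 1$), both complements vanish, so $x\oslash y=0\to y=1$ and likewise $y\oslash x=0\to x=1$. Finally, if either argument is $1$, the sum is $1$ because $1$ is absorbing for $+$. In every ``otherwise'' case the meet of $1$ with $1$ is $1$, as claimed.

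The step I expect to require the most care is the mixed case, $x\in C_0$ with $y$ in a higher component: the argument hinges on $\bar x\ne 1$, which is exactly where the hypothesis $x\ne 0$ enters, and on $0$ being the global minimum so that $0\to z=1$ for all $z$. These two facts, both guaranteed by the tower convention that $\mathbf{C}_0$ is a bounded Wasjberg algebra carrying the minimum, are what push both one-sided pseudo-sums up to $1$; the remaining cases are then routine readings of the ordinal-sum tables.
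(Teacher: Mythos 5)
Your proof is correct and follows the same route as the paper: first compute the complementation in the ordinal sum (namely $\bar{x}=x\to_0 0$ for $x\in C_0$ and $\bar{x}=0$ otherwise), then read off $x\oslash y$ and $y\oslash x$ from the ordinal-sum table for $\to$. The paper leaves the resulting case analysis implicit ("the result follows from the definitions"), whereas you spell it out, correctly isolating where $x\neq 0$ (so $\bar x\neq 1$) and the minimality of $0$ are used.
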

 \begin{proof}
 By the definition of the ordinal sum, a simple calculation shows that in $\textbf{A}_\tau$:
  $$
 \begin{aligned}
 \bar{x} =
\left\{\begin{array}{ll}
  \bar{x}^0 & \ \ \mbox{if} \ \ x\in C_0;\\
0&  \ \  \mbox{otherwise}.\\
\end{array}\right.\\
 \end{aligned}
 $$
where $\bar{x}^0:=x\to_00$. Now, the result follows from the definitions of addition and that of $\to$ in the ordinal sum.
 \end{proof}
 \begin{lem}\label{goodchain}
Let $L$ be a BL-chain, and $x, y\in L$.\
\begin{itemize}
\item[(i).] $x+y<1$ implies $x\otimes y=0$.
\item[(ii).] $x+y=x+z$ and $x\otimes y=x\otimes z$ imply $\bar{y}=\bar{z}$.
\item[(iii).] $x+y=x+z<1$ implies $\bar{y}=\bar{z}$.
\item[(iv).] $x+y=x$ implies $x=1$ or $y=0$.
\item[(v).] $x+y+(x\otimes y)=x+y$
\item[(vi)] $(x\otimes y)+((x+y)\otimes z)=(x+y)\otimes ((x\otimes y)+z)$.
\item[(vii).] $(x\vee y)+z=(x+z)\vee (y+z)$ and $(x\wedge y)+z=(x+z)\wedge (y+z)$
\item[(viii).] $x+y=x$ if and only if $\bar{x}+\bar{y}=\bar{y}$.
\end{itemize}
\end{lem}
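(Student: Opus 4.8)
The plan is to reduce every item to the ordinal-sum description of a BL-chain. By Theorem \ref{bl-chain} we may assume $L=\textbf{A}_\tau$ for a tower $\tau=(C_i:i\in I)$ with $C_0$ a Wasjberg algebra, so that (as in the proof of Proposition \ref{ad-ord}) complementation is $\bar x=\bar x^0$ when $x\in C_0$ and $\bar x=0$ otherwise, and addition obeys the dichotomy of Proposition \ref{ad-ord}: for $x,y\neq 0$ one has $x+y=x\oplus_0 y$ if $x,y\in C_0$ and $x+y=1$ in every other case. Thus each statement splits into the regime $x+y=1$, where it is routine, and the regime $x+y<1$, which by Proposition \ref{ad-ord} forces $x,y\in C_0$ and hence pushes the genuine content into the bottom MV-chain $C_0=MV(L)$. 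Throughout I would use freely that $0$ is the identity and $1$ the absorbing element for $+$, that $1\otimes z=z$, and that $+$ is monotone (the preceding lemma on $+$).

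For (i), if $x=0$ or $y=0$ the claim is trivial, and if $x,y\neq 0$ then $x+y<1$ forces $x,y\in C_0$ with $x\oplus_0 y<1$; so it suffices to prove the MV-chain fact $a\oplus b<1\Rightarrow a\otimes b=0$. I would get this by contraposition in the chain $C_0$: if $a\otimes b\neq 0$ then $a\not\le\bar b$, hence $\bar b<a$, so $\bar a<b$, whence $a\oplus b=\bar a\to b=1$. Items (v) and (vi) then fall out from the regime split alone: when $x+y=1$ both sides reduce by absorption and $1\otimes z=z$, and when $x+y<1$ we have $x\otimes y=0$ by (i), so both sides collapse through the identity and zero laws. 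Item (vii) needs no cases: on a chain $\vee,\wedge$ are $\max,\min$, and since $w\mapsto w+z$ is monotone it commutes with $\max$ and $\min$, which is exactly the two distributivities.

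The substantive items are (ii) and (iv). For (ii) I would avoid ordinal-sum bookkeeping by projecting onto the MV-center through the retraction $x\mapsto\bar{\bar x}$: applying $\bar{\bar{\cdot}}$ to $x+y=x+z$ and using $\overline{\overline{x+y}}=\bar{\bar x}+\bar{\bar y}$ (the preceding lemma on $+$), and to $x\otimes y=x\otimes z$ using $\bar{\bar x}\otimes\bar{\bar y}=\overline{\overline{x\otimes y}}$, reduces everything to $\bar{\bar x}\oplus\bar{\bar y}=\bar{\bar x}\oplus\bar{\bar z}$ and $\bar{\bar x}\otimes\bar{\bar y}=\bar{\bar x}\otimes\bar{\bar z}$ inside the MV-chain $MV(L)$, where $+$ and $\oplus$ agree. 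The remaining core is the MV-chain cancellation law: $a\oplus b=a\oplus c$ and $a\otimes b=a\otimes c$ imply $b=c$; in a chain one orders $b,c$ and argues that the $\oplus$-equation settles the case $a\oplus b,a\oplus c<1$ while the $\otimes$-equation settles the complementary case, the mixed case being excluded by (i). From $\bar{\bar y}=\bar{\bar z}$ one recovers $\bar y=\bar z$. Then (iii) is immediate: $x+y=x+z<1$ gives $x\otimes y=0=x\otimes z$ by (i), and (ii) applies. For (iv), if $y\neq 0$ then either $x,y\in C_0$, where the MV-chain identity $a\oplus b=a\Rightarrow a=1$ or $b=0$ gives $x=1$, or not both lie in $C_0$, whence $x+y=1$ by Proposition \ref{ad-ord} and the hypothesis forces $x=1$.

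Finally (viii) combines (iv) with the equivalence $\bar y=1\Leftrightarrow y=0$: the forward direction uses (iv) to split into $x=1$ (so $\bar x=0$ and $\bar x+\bar y=0+\bar y=\bar y$) or $y=0$ (so $\bar y=1$ absorbs), and the backward direction reads $\bar x+\bar y=\bar y$ inside $MV(L)$ and applies (iv) there, giving $\bar y=1$ or $\bar x=0$. I expect the main obstacle to sit exactly here and in the cancellation step of (ii): the pseudo-addition is not cancellative on $L$, so one must quarantine all the genuine arithmetic inside the MV-center $C_0$. The truly delicate point is the backward direction of (viii), where from $\bar x=0$ one must recover $x+y=x$; this requires controlling elements lying above $C_0$ (where $\bar x=0$ without $x=1$), which is precisely the feature separating BL-chains from MV-chains and where the argument must be handled with the greatest care.
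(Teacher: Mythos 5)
Your treatment of items (i)--(vii) is sound, though it takes a partly different route from the paper: the paper deliberately avoids the ordinal-sum decomposition (remarking that direct proofs are equally simple) and proves (i) by noting that $x+y<1$ forces $\bar{x}\nleq y$ or $\bar{y}\nleq x$, hence $y\leq\bar{x}$ or $x\leq\bar{y}$ in a chain, then derives (v) and (vi) from the dichotomy ``$x\otimes y=0$ or $x+y=1$'' exactly as you do; you instead route (i), (iv), (v), (vi) through Theorem \ref{bl-chain} and Proposition \ref{ad-ord}. For (ii) you and the paper use the identical device: project onto the MV-center via $x\mapsto\bar{\bar{x}}$ and invoke MV-chain cancellation (\cite[Lemma 1.6.1]{C2}). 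Two small points: in (iv) your appeal to Proposition \ref{ad-ord} presupposes $x\neq 0$ (the case $x=0$ is immediate since $0+y=y$), and your sketch of the MV-chain cancellation law is vaguer than the citation it replaces; neither is a real obstacle.

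The genuine issue is the one you yourself flag and do not close: the ``if'' direction of (viii). From $\bar{x}+\bar{y}=\bar{y}$, item (iv) yields only $\bar{y}=1$ or $\bar{x}=0$, and in the second case you must deduce $x+y=x$ from $\bar{x}=0$. You rightly identify this as the point where BL-chains diverge from MV-chains, but no amount of care will repair it, because the implication is false. Take $L=[0,1]$ with the product structure and $x=y=\frac{1}{2}$: then $\bar{x}=\bar{y}=0$, so $\bar{x}+\bar{y}=0=\bar{y}$, yet $x+y=(0\to\frac{1}{2})\wedge(0\to\frac{1}{2})=1\neq\frac{1}{2}=x$. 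So your proposal proves only the ``only if'' half of (viii), and indeed only that half is provable. (The paper's own proof, ``this follows easily from (iv)'', establishes the same half and silently skips the converse; fortunately only the forward implication is used later, namely in Theorem \ref{summand} to show that $\overline{\mathbf{a}}$ is a good sequence whenever $\mathbf{a}$ is.)
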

\begin{proof}
We could use the decomposition of BL-chains given by Theorem \ref{bl-chain}, Proposition \ref{ad-ord}, combined with similar properties for MV-chains\cite[Lemma 1.6.1]{C2}. But since direct proofs are equally simple, we prefer to give direct proofs.\\
(i). Since $x+y<1$, then $x\oslash y\ne 1$ or $y\oslash x \ne 1$, that is $\bar{x}\nleq y$ or $\bar{y}\nleq x$. Since $L$ is chain, then $y\leq \bar{x}$ or $x\leq \bar{y}$. But in each case, $x\otimes y=0$ as needed.\\
(ii). Suppose that $x+y=x+z$ and $x\otimes y=x\otimes z$, then $\overline{\overline{x}}+\overline{\overline{y}}=\overline{\overline{x}}+\overline{\overline{z}}$ and $\overline{\overline{x}}\otimes \overline{\overline{y}}=\overline{\overline{x}}\otimes \overline{\overline{z}}$. It follows from a similar result for MV-chains that $\overline{\overline{y}}=\overline{\overline{z}}$, from which we obtain $\bar{y}=\bar{z}$.\\
(iii). Suppose that $x+y=x+z<1$, then by (i), $x\otimes y=x\otimes z=0$. Hence, the result follows from (ii).\\
(iv). Suppose that $x+y=x$ and $x<1$, then $x+y=x+0<1$ and it follows from (iii) that $\bar{y}=1$. Thus, $y=0$ as required.\\
(v). If $x\otimes y=0$, the result is clear. On the other hand, if $x\otimes y\ne 0$, then by (i), $x+y=1$ and the equation is also obvious.\\
(vi) If $x+y=1$, then both sides of the equation equal $(x\otimes y)+z$. But, if $x+y<1$, then by (i) both sides of the equation equal $(x+y)\otimes z$.\\
(vii) If $x\leq y$, then $x+z\leq y+z$ and $(x\vee y)+z=(x+z)\vee (y+z)=y+z$. The proof when $y\leq x$ is similar.\\
(viii) This follows easily from (iv).
\end{proof}
\begin{rem}\label{sub}
It follows from Remark \ref{equation} that the equations (v), (vi) and (vii) of Lemma \ref{goodchain} above hold in every BL-algebra. We should also point out that in property (ii), the conclusion cannot be replaced by $y=z$.  
\end{rem}
Recall \cite{am} that a hoop $\textbf{A}=(A,\otimes, \to, 1)$ is called cancellative if $(A,\otimes)$ is a cancellative monoid. It is therefore clear that no nontrivial BL-algebra is cancellative since $0\otimes 1=0$. For BL-algebras, we introduce the following definition.
\begin{defi}
A BL-chain $L$ is called \textit{of cancellative type} if for every $x,y,z\in L$,
\begin{center}
$x+y=x+z$ and $x\otimes y=x\otimes z$ imply $y=z$.
\end{center}
A BL-algebra is called of cancellative type if it is a subdirect product of BL-chains of cancellative type.
\end{defi}
Note that if $L$ is a BL-chain such that $(L\setminus \{0\}, \otimes, \to, 1)$ is a cancellative hoop, then $L$ is of cancellative type.
\begin{ex}
(i) Every MV-algebra is of cancellative type.\\
(ii)The Product structure on $[0,1]$ is of cancellative type, while the G\"{o}del structure on $[0,1]$ is not.\\
(iii) More generally, every product BL-algebra is of cancellative type. In fact, it follows from \cite[Lemma 5.1]{afm} that every product BL-chain is of cancellative type. It is also clear that every product BL-algebra is a subdirect product of product BL-chains.
\end{ex}
The following Lemma is key for the rest of the paper.
\begin{lem}\label{Idass}
In every BL-algebra $L$, the following equation holds.
$$(x\otimes y)+((x+y)\otimes z)=(x\otimes z)+((x+z)\otimes y)$$
\end{lem}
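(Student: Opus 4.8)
The plan is to reduce the identity first to BL-chains and then to the MV-component, where it becomes a pure MV-algebra identity. By Remark~\ref{equation} it suffices to verify the displayed equation in an arbitrary BL-chain $L$; so for the rest I assume $L$ is totally ordered and, invoking Theorem~\ref{bl-chain}, identify it with an ordinal sum $\textbf{A}_\tau$ of a tower whose bottom component $C_0$ is an MV-algebra. The two facts I will lean on throughout are Proposition~\ref{ad-ord}, by which $x+y=x\oplus_0 y$ when $x,y\in C_0$ are nonzero and $x+y=1$ otherwise, and the ordinal-sum product rule, by which the product of two elements lying in different components equals the one sitting in the lower component.

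First I would clear away the degenerate inputs. If any of $x,y,z$ equals $0$, the identity collapses to commutativity of $\otimes$ once the neutrality of $0$ for $+$ is used; if any of them equals $1$, the absorbing property of $1$ for $+$ together with Lemma~\ref{goodchain}(v) (valid in every BL-algebra by Remark~\ref{sub}) reduces both sides to a common value. This leaves the case where $x,y,z$ each lie strictly between $0$ and $1$, hence each in a well-defined component of the tower.

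The heart of the matter is the case $x,y,z\in C_0$. There $+$ is $\oplus_0$ and $\otimes$ is $\otimes_0$, so the claim becomes the MV-algebra identity $(x\otimes y)\oplus((x\oplus y)\otimes z)=(x\otimes z)\oplus((x\oplus z)\otimes y)$. I would prove this as a separate MV-algebra fact: both sides are the same symmetric function of $x,y,z$ (each measures the overflow of the truncated sum $x\oplus y\oplus z$), so the equation holds in the standard MV-algebra $[0,1]$ and therefore, by Chang's completeness theorem, in every MV-algebra, in particular in $C_0$.

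Finally, when at least one of $x,y,z$ lies in a higher component $C_i$ with $i>0$, I would rewrite each side through Lemma~\ref{goodchain}(vi) as $(x+y)\otimes((x\otimes y)+z)$ and $(x+z)\otimes((x\otimes z)+y)$ and run a short case analysis on the components containing $x,y,z$, using Lemma~\ref{goodchain}(i) (so that $x\otimes y\ne 0$ already forces $x+y=1$) and Proposition~\ref{ad-ord} (any sum meeting a component above $C_0$ equals $1$). The main obstacle I anticipate is precisely this cross-component bookkeeping: unlike in an MV-chain, the product of an element of $C_0$ with an element of a higher component is generally nonzero, so the clean implication ``$x+y<1\Rightarrow x\otimes y=0$'' is no longer enough, and one must instead track, configuration by configuration, which component every intermediate product falls into and check that both sides collapse to the same element. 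Organizing these configurations so that each visibly reduces to a common value is the delicate part of the argument; the MV-component case above is, by comparison, routine.
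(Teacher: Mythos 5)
Your overall strategy is the same as the paper's: reduce to BL-chains by Remark~\ref{equation}, decompose the chain as an ordinal sum $\textbf{A}_\tau$ by Theorem~\ref{bl-chain}, dispose of inputs in $\{0,1\}$, prove the identity inside $C_0$ as a pure MV-algebra fact, and finish with a case analysis on which components contain $x,y,z$. Your handling of the degenerate inputs is correct, and so is the all-in-$C_0$ case: where the paper simply cites the corresponding identity for MV-chains from Cignoli--D'Ottaviano--Mundici, you reprove it by checking it in $[0,1]$ (both sides compute the overflow of $x+y+z$) and invoking completeness, which is a legitimate alternative.

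The gap is that the cross-component cases --- which is where the paper does essentially all of its work, in seven separate configurations --- are only described, not carried out; you explicitly defer ``the delicate part,'' and a proof needs those cases checked. They are in fact milder than you fear: by Proposition~\ref{ad-ord} every sum $u+v$ with $u$ or $v$ outside $C_0$ (both nonzero) equals $1$, and by the ordinal-sum product rule $u\otimes v=u$ whenever $u\in C_0\setminus\{1\}$ and $v$ lies in a higher component, so each configuration collapses after one or two applications of these rules. The only genuinely non-obvious one is $y\in C_0$, $x,z\notin C_0$ (and its mirror image), where one must decide where $x\otimes z$ lands: the paper observes that if $x\otimes z\in C_0$ then $\overline{x\otimes z}=x\to\bar{z}=x\to 0=\bar{x}=0$, forcing $x\otimes z=1$ since $C_0$ is an MV-algebra, so in either event $(x\otimes z)+y=1$ and both sides agree. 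Your idea of first rewriting both sides via Lemma~\ref{goodchain}(vi) (valid in all BL-algebras by Remark~\ref{sub}) is a sensible organizing device the paper does not use, but it does not by itself remove the need to track where the products fall. One small correction: Lemma~\ref{goodchain}(i) is not weakened in the ordinal sum --- if $x\in C_0\setminus\{1\}$ and $y$ sits in a higher component then $x\otimes y=x\ne 0$, but also $x+y=1$, so the implication is intact; it is merely, as you say, insufficient on its own to close the cases.
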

\begin{proof}
Again, by Remark \ref{equation}, there is no harm in assuming that $L$ is a BL-chain. But, by Theorem \ref{bl-chain}, there exists a tower of totally order Wasjberg hoops $\tau=(C_i:i\in I)$ such that $L\cong \textbf{A}_\tau$. We shall justify the equation for $\textbf{A}_\tau$ by considering cases.\\
First, it is clear that the equation holds if any of $x,y,z$ is equal to $0$ or $1$, so we may assume that $\{x,y,z\}\cap\{0,1\}=\emptyset$.\\
It is also clear from the definition of $\otimes$ in the ordinal sum that for $x\in C_0\setminus\{1\}$ and $y\notin C_0$, then $x\otimes y=x$.\\
Case 1: If $x,y,z\in C_0$, the equation holds since $C_0$ is an MV-chain MV-chains\cite[Lemma 1.6.1]{C2}.\\
Case 2: If $x,y\in C_0$ and $z\notin C_0$. Then, $(x\otimes y)+((x+y)\otimes z)=(x\otimes _0y)\oplus_0(x\oplus_0y)=x\oplus_0y=(x\otimes z)+y=(x\otimes z)+((x+z)\otimes y)$.\\
Case 3: If $x,z\in C_0$ and $y\notin C_0$. Then, $(x\otimes y)+((x+y)\otimes z)=x\oplus_0z=(x\otimes _0z)\oplus_0(x\oplus_0z)=(x\otimes z)+((x+z)\otimes y)$ \\
Case 4: If $y,z\in C_0$ and $x\notin C_0$. Then, $(x\otimes y)+((x+y)\otimes z)=y\oplus_0z=z\oplus_0 x=(x\otimes z)+((x+z)\otimes y)$.\\
Case 5: If $x\in C_0$ and $y, z\notin C_0$. Then, both sides of the equation are equal to $1$.\\
Case 6: If $y\in C_0$ and $x,z\notin C_0$. Then $(x\otimes y)+((x+y)\otimes z)=1$ and $(x\otimes z)+((x+z)\otimes y)=(x\otimes z)+y$. If $x\otimes z\notin C_0$, then $(x\otimes z)+y=1$. If $x\otimes z\in C_0$, then $\overline{x\otimes z}=\overline{x\otimes z}^0\in C_0$. But, $\overline{x\otimes z}=x\to \bar{z}=x\to 0=\bar{x}=0$. Hence, $x\otimes z=1$ since $C_0$ is an MV-algebra. Therefore, both sides of the equation are equal to $1$.\\
Case 7: If $z\in C_0$ and $x,y\notin C_0$. This case is similar to case 6.\\
Case 8: If $x,y,z\notin C_0$. An analysis similar to the one in case 6 combined with Proposition \ref{ad-ord} shows that both sides of the equation are equal to $1$.
\end{proof}
 \section{Good Sequences in BL-algebras}
\begin{defi}
Let $L$ be a BL-algebra. A sequence $\mathbf{a}:=(a_1, a_2, \ldots, a_n, \ldots)$ of elements of $L$ is called a good sequence if for all $i$, $a_{i}+a_{i+1}=a_i$, and there exists an integer $n$ such that $a_r=0$ for all $r>n$. In this case, instead of writing $\mathbf{a}=(a_1, a_2, \ldots, a_n, 0, 0 \ldots)$, we will simply write $\mathbf{a}=(a_1, a_2, \ldots, a_n)$
\end{defi} 
Note that if, $\mathbf{a}=(a_1, a_2, \ldots, a_n)$ is a good sequence, then $(\underbrace{1, \ldots, 1}_{m\;'s\; 1},a_1, a_2, \ldots, a_n)$ is again a good sequence. The later will be denoted by $(1^m,a_1, a_2, \cdots, a_n)$.\\
We define the addition of sequences as follows:
\begin{defi}
Let $L$ be a BL-algebra and let $\mathbf{a}=(a_1, a_2, \ldots, a_n, \ldots)$, $\mathbf{b}=(b_1, b_2, \ldots, b_n, \ldots)$ be sequences in $L$. We define the sum of $\mathbf{a}$ and $\mathbf{b}$ by $\mathbf{a}+\mathbf{b}=(c_1,c_2,\ldots, c_n, \ldots)$ where $c_i:=a_i+(a_{i-1}\otimes b_1)+\cdots +(a_1\otimes b_{i-1})+b_i$.
\end{defi}
It is clear that the addition above is commutative as $+$ and $\otimes$ are commutative in $L$.
We start by characterizing good sequences of BL-chains. 
\begin{prop}\label{goodschain}
Let $L$ be a BL-chain, then good sequences of $L$ are of the form $(1^p,a)$ for some integer $p\geq 0$ and $a\in L$.
\end{prop}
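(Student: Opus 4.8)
The plan is to prove that every good sequence of a BL-chain $L$ has the form $(1^p, a)$, i.e. a block of $1$'s followed by a single nonzero element and then zeros. The natural tool is the structure theorem (Theorem \ref{bl-chain}), which lets me assume $L\cong \textbf{A}_\tau$ for a tower $\tau=(C_i:i\in I)$ of totally ordered Wasjberg hoops, together with the explicit description of the addition in Proposition \ref{ad-ord}. The defining condition of a good sequence is $a_i+a_{i+1}=a_i$ for all $i$, so the heart of the matter is to understand which pairs $(x,y)$ in a BL-chain satisfy $x+y=x$.

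First I would analyze the equation $x+y=x$ in a BL-chain. By Lemma \ref{goodchain}(iv), $x+y=x$ forces $x=1$ or $y=0$. This is the crucial dichotomy. Applying it to consecutive entries of a good sequence $\mathbf{a}=(a_1,a_2,\ldots)$: the relation $a_i+a_{i+1}=a_i$ means that for each $i$, either $a_i=1$ or $a_{i+1}=0$. I would then argue by induction (or a minimality argument) that once some entry $a_k$ is not equal to $1$, the next entry $a_{k+1}$ must be $0$, and hence (since $0=a_{k+1}$ makes $a_{k+1}+a_{k+2}=0$ force $a_{k+2}=0$ by the monotonicity in Lemma on $+$, or directly from goodness) all subsequent entries vanish. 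Thus the sequence consists of an initial run of $1$'s, say $p$ of them, followed by the single entry $a:=a_{p+1}$ (which may be any element of $L$, including possibly $1$ if the sequence is all ones, or $0$), and then zeros.

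More precisely, I would let $p$ be the largest index such that $a_1=\cdots=a_p=1$ (with $p=0$ if $a_1\ne 1$); this is well-defined because the sequence is eventually $0$, so not every entry can be $1$ unless the sequence is the zero sequence handled trivially. For $i=p+1$, the entry $a_{p+1}$ is by definition not forced to be $1$; and for every $i\geq p+1$ with $a_i\ne 1$, Lemma \ref{goodchain}(iv) applied to $a_i+a_{i+1}=a_i$ gives $a_{i+1}=0$. Taking $i=p+1$ yields $a_{p+2}=0$ unless $a_{p+1}=1$, but $a_{p+1}=1$ would contradict the maximality of $p$. Hence $a_{p+2}=0$, and then all later terms are $0$ as well. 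Writing $a=a_{p+1}$ gives $\mathbf{a}=(1^p,a)$, as claimed.

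The main obstacle I anticipate is essentially bookkeeping rather than a deep difficulty: I must handle the degenerate cases cleanly, namely the zero sequence (take $p=0$, $a=0$) and sequences that are entirely $1$'s up to the final nonzero spot. The dichotomy from Lemma \ref{goodchain}(iv) does almost all the work, so an alternative and perhaps cleaner route is to bypass the structure theorem entirely and argue purely from Lemma \ref{goodchain}(iv): since $+$ is monotone and $0$ is the identity (from the earlier lemma on $+$), once an entry drops below $1$ the goodness condition propagates a $0$ to the next slot and monotonicity keeps it $0$ thereafter. I would present the direct argument via Lemma \ref{goodchain}(iv) as the primary proof, noting the structural decomposition only as an optional cross-check.
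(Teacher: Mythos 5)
Your argument is correct and is essentially the paper's proof: the paper simply states that the result "is clear from Lemma \ref{goodchain}(iv)", which is exactly the dichotomy ($x+y=x$ implies $x=1$ or $y=0$) you use, with your induction on the first non-$1$ entry being the bookkeeping the paper leaves implicit. No need for the structure theorem or Proposition \ref{ad-ord}; your primary route is the intended one.
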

\begin{proof}
This is clear from Lemma \ref{goodchain}(iv).
\end{proof}
\begin{rem}\label{sumc}
If $C$ is a BL-chain and $\mathbf{a}, \mathbf{b}$ are two good sequences in $C$, then by Proposition \ref{goodschain}, there exists integers $p,q\geq 0$ and $a, b\in L$ such that $\mathbf{a}=(1^p,a)$ and $\mathbf{b}=(1^q,b)$. Therefore, $\mathbf{a}+\mathbf{b}=(1^{p+q}, a+b, a\otimes b)$ and it follows from Lemma \ref{goodchain} (v) that $\mathbf{a}+\mathbf{b}$ is a good sequence. Thus, the sum of good sequences in a BL-chain is again a good sequence.
\end{rem}
\begin{lem}\label{goodc}
Suppose that $L\subseteq \prod_iC_i$ be a subdirect product of BL-chains and $\pi_i:\prod_iC_i\to C_i$ is the natural projection. If $\mathbf{a}=(a_1, a_2, \ldots, a_n, \ldots)$ is a sequence in $L$, for every $i$, let $\mathbf{a}_i=(\pi_i(a_1), \pi_i(a_2), \ldots, \pi_i(a_n), \ldots)$ which is a sequence in $C_i$. Then;\\
1. A sequence $\mathbf{a}$ is a good sequence in $L$ if and only if for every $i$, $\mathbf{a}_i$ is a good sequence in $C_i$ and there exists $m\geq 0$ such that $\pi_i(a_n)=0$ for all $n>m$ and for all $i$.\\
2. For every good sequences $\mathbf{a}, \mathbf{b}$ in $L$, $\mathbf{a}+\mathbf{b}$ is a good sequence in $L$ if and only if for all $i$, $\mathbf{a}_i+\mathbf{b}_i$ is a good sequence in $C_i$.
\end{lem}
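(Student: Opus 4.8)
The plan is to exploit the fact that in the subdirect representation $L\subseteq\prod_iC_i$ each projection $\pi_i\colon L\to C_i$ is a surjective BL-algebra homomorphism and that the family $(\pi_i)$ is separating, i.e. $x=y$ in $L$ precisely when $\pi_i(x)=\pi_i(y)$ for all $i$. In particular $\pi_i$ preserves $\otimes$, $\to$, $0$ and $1$; and since $+$ is built from these through $\oslash$ (recall $x\oslash y=\bar x\to y$ and $x+y=(x\oslash y)\wedge(y\oslash x)$), it also preserves $+$. Everything below is then bookkeeping on top of these two facts.

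For part 1, the forward implication is immediate: if $\mathbf{a}$ is good in $L$, then applying $\pi_i$ to each relation $a_j+a_{j+1}=a_j$ gives $\pi_i(a_j)+\pi_i(a_{j+1})=\pi_i(a_j)$, so each $\mathbf{a}_i$ is good, and a bound $n$ with $a_r=0$ for $r>n$ works uniformly (take $m=n$) because $\pi_i(0)=0$. For the converse, the separating property turns the coordinatewise relations $\pi_i(a_j)+\pi_i(a_{j+1})=\pi_i(a_j)$ back into $a_j+a_{j+1}=a_j$ in $L$, while the hypothesised uniform $m$ forces $a_r=(\pi_i(a_r))_i=0$ for all $r>m$. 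The point I would emphasise is exactly why the uniform bound must be assumed separately: goodness of each $\mathbf{a}_i$ only supplies a support bound $n_i$ possibly depending on $i$, and over an infinite index set these need not be bounded, so without the extra hypothesis $\mathbf{a}$ itself may fail the finite-support requirement.

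For part 2, I would first record the termwise identity $(\mathbf{a}+\mathbf{b})_i=\mathbf{a}_i+\mathbf{b}_i$. This follows by applying $\pi_i$ to the convolution $c_k=\sum_{j=0}^{k}a_{k-j}\otimes b_j$ (with the conventions $a_0=b_0=1$) term by term, using that $\pi_i$ commutes with $+$ and $\otimes$. With this identity, part 2 becomes an application of part 1 to $\mathbf{c}=\mathbf{a}+\mathbf{b}$: its $i$-th projection is $\mathbf{a}_i+\mathbf{b}_i$, so by part 1 the statement ``$\mathbf{c}$ good in $L$'' is equivalent to ``each $\mathbf{a}_i+\mathbf{b}_i$ good in $C_i$, together with a uniform support bound for $\mathbf{c}$.''

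The only remaining point — and the step I expect to be the crux — is that this uniform bound is automatic once $\mathbf{a}$ and $\mathbf{b}$ are already good in $L$. Here I would use their supports $n_a,n_b$ and check that for $k>n_a+n_b$ every convolution term $a_{k-j}\otimes b_j$ vanishes: if $k-j\le n_a$ and $j\le n_b$ then $k\le n_a+n_b$, a contradiction, so one factor is $0$ and the product is $0$ (since $0\otimes x\le 0\wedge x=0$). As $0$ is the identity for $+$, this yields $c_k=0$ in $L$ for all $k>n_a+n_b$, hence $\pi_i(c_k)=0$ for all such $k$ and all $i$. Thus the uniform-bound clause of part 1 holds with $m=n_a+n_b$, and the two implications of part 2 drop out of part 1 applied to $\mathbf{c}$.
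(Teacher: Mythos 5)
Your proof is correct and follows essentially the same route as the paper's: both rest on the facts that each $\pi_i$ is a BL-homomorphism (hence preserves $+$ and $\otimes$, so $(\mathbf{a}+\mathbf{b})_i=\mathbf{a}_i+\mathbf{b}_i$) and that the family $(\pi_i)$ is separating. The paper dismisses the details as ``simple''; your explicit check that every convolution term $a_{k-j}\otimes b_j$ vanishes for $k>n_a+n_b$, which supplies the uniform support bound needed to invoke part 1, is a worthwhile detail it leaves implicit.
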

\begin{proof}
1. This follows from the fact that $a_n+a_{n+1}=a_n$ if and only if $\pi_i(a_n+a_{n+1})=\pi_i(a_n)$ for all $i$.\\
2.  The proof is simple and follows once again from the fact that every $\pi_i$ is a BL-homomorphism, and $x=y$ in $L$ if and only if $\pi_i(x)=\pi_i(y)$ for all $i$.
\end{proof}
It follows from Lemma \ref{goodc} (2) and Remark \ref{sumc} that in any BL-algebra, the sum of two good sequences in again a good sequences. It is clear that for every good sequence $\mathbf{a}$ in $L$, $\mathbf{a}+(0)=\mathbf{a}$.\\
The following Lemma whose proof is straightforward reduces the associativity of the addition of good sequences to the case of BL-chains.
\begin{lem}\label{goodass}
Suppose that $L\subseteq \prod_iC_i$ be a subdirect product of BL-chains and $\pi_i:\prod_iC_i\to C_i$ is the natural projection. If $\mathbf{a}=(a_1, a_2, \ldots, a_n, \ldots)$ is a sequence in $L$, for every $i$, let $\mathbf{a}_i=(\pi_i(a_1), \pi_i(a_2), \ldots, \pi_i(a_n), \ldots)$ which is a sequence in $C_i$. Then for every sequences $\mathbf{a}, \mathbf{b}, \mathbf{c}$ in $L$, $(\mathbf{a}+ \mathbf{b})+\mathbf{c}=\mathbf{a}+(\mathbf{b}+\mathbf{c})$ if and only if for all $i$, $(\mathbf{a}_i+ \mathbf{b}_i)+\mathbf{c}_i=\mathbf{a}_i+(\mathbf{b}_i+\mathbf{c}_i)$.
\end{lem}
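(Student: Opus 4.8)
The plan is to exploit the fact that the addition of sequences is built entirely out of the binary operations $+$ and $\otimes$ of the underlying BL-algebra, together with the standard observation that a subdirect product is separated by its family of projections. The whole statement is then just a transfer of a termwise identity through the homomorphisms $\pi_i$.

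First I would record the key commutation property: each projection $\pi_i\colon\prod_iC_i\to C_i$ is a BL-homomorphism, hence preserves $\otimes$, $\to$, $0$ and $1$, and therefore also the derived operations $\bar{\cdot}$, $\oslash$, and the commutative addition $+$. Applying $\pi_i$ term by term to the defining formula for the $n$-th entry of $\mathbf{a}+\mathbf{b}$, one gets
$$\pi_i\big(a_n+(a_{n-1}\otimes b_1)+\cdots+(a_1\otimes b_{n-1})+b_n\big)=\pi_i(a_n)+(\pi_i(a_{n-1})\otimes\pi_i(b_1))+\cdots+\pi_i(b_n),$$
and the right-hand side is exactly the $n$-th entry of $\mathbf{a}_i+\mathbf{b}_i$. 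In other words, for all $i$ the projection $\pi_i$ commutes with the addition of sequences, so that the projected sequence of $\mathbf{a}+\mathbf{b}$ equals $\mathbf{a}_i+\mathbf{b}_i$. (The only point to check here is that $\pi_i$ respects the mixed products $a_{n-j}\otimes b_j$ and the iterated sum, which is immediate from the homomorphism property; this is the one spot where a little care is needed, but it is entirely routine.)

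Second, I would translate equality of sequences into equality of all their projections. Since $L\subseteq\prod_iC_i$ is a subdirect product, the projections are jointly injective, so two sequences in $L$ coincide if and only if they coincide termwise after applying every $\pi_i$. Combining this with the commutation property and applying it twice, $\pi_i$ sends $(\mathbf{a}+\mathbf{b})+\mathbf{c}$ to $(\mathbf{a}_i+\mathbf{b}_i)+\mathbf{c}_i$ and $\mathbf{a}+(\mathbf{b}+\mathbf{c})$ to $\mathbf{a}_i+(\mathbf{b}_i+\mathbf{c}_i)$. Hence $(\mathbf{a}+\mathbf{b})+\mathbf{c}=\mathbf{a}+(\mathbf{b}+\mathbf{c})$ holds in $L$ if and only if $(\mathbf{a}_i+\mathbf{b}_i)+\mathbf{c}_i=\mathbf{a}_i+(\mathbf{b}_i+\mathbf{c}_i)$ holds in $C_i$ for every $i$, which is the asserted equivalence. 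There is no real obstacle: the content is precisely the commutation of $\pi_i$ with sequence addition, and everything else is the standard subdirect-product separation argument, so the proof is indeed straightforward as claimed.
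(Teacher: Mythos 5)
Your argument is correct and is exactly the ``straightforward'' proof the paper has in mind (the paper omits it entirely, declaring it routine): each $\pi_i$ is a BL-homomorphism, hence preserves the derived operations $\bar{\cdot}$, $\oslash$, $\wedge$ and $+$, so it commutes with sequence addition entrywise, and the subdirect embedding $L\subseteq\prod_iC_i$ separates points, so equality in $L$ is equivalent to equality of all projections. Nothing is missing.
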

Throughout the rest of this paper, $M_L$ will denote the set of all good sequences in the BL-algebra $L$.
\begin{prop}\label{assoc}
For every BL-algebra $L$, $(M_L, +, (0))$ is an Abelian monoid satisfying: for every $\mathbf{a}, \mathbf{b}\in M_L$,
$$\mathbf{a}+\mathbf{b}=(0)\ \ \; \text{implies}\; \  \ \mathbf{a}=\mathbf{b}=(0) $$
\end{prop}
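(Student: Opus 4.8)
The plan is to treat the Abelian-monoid axioms and the zero-sum-free property separately, since most of the former are already in hand. Closure of $M_L$ under $+$ has been recorded (from Lemma \ref{goodc}(2) together with Remark \ref{sumc}), commutativity of $+$ is immediate from the commutativity of $+$ and $\otimes$ in $L$, and $(0)$ is a two-sided identity because $\mathbf a+(0)=\mathbf a$. Thus the only monoid axiom requiring genuine work, and the step I expect to be the main obstacle, is associativity.

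For associativity I would first invoke Lemma \ref{goodass} to reduce the identity $(\mathbf a+\mathbf b)+\mathbf c=\mathbf a+(\mathbf b+\mathbf c)$ to the case where $L$ is a BL-chain $C$. By Proposition \ref{goodschain} every good sequence of $C$ has the form $(1^p,a)$, and by Remark \ref{sumc} one has $(1^p,a)+(1^q,b)=(1^{p+q},a+b,a\otimes b)$. The leading block of $1$'s can then be separated off: adding $(1)$ to a good sequence merely prepends a $1$ (a one-line check from the definition of $+$ on sequences and goodness), and the identity $((1)+\mathbf u)+\mathbf v=(1)+(\mathbf u+\mathbf v)$ holds in $C$ directly, because in a chain a good sequence of length $\ge 2$ must begin with $1$ (Proposition \ref{goodschain}). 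Iterating this reduces associativity in $C$ to the case of single-entry sequences $(a),(b),(c)$, where a direct computation from the definition gives
$$((a)+(b))+(c)=\big((a+b)+c,\ (a\otimes b)+((a+b)\otimes c),\ a\otimes b\otimes c\big),$$
$$(a)+((b)+(c))=\big(a+(b+c),\ (a\otimes(b+c))+(b\otimes c),\ a\otimes b\otimes c\big).$$
The first and third coordinates agree by associativity of $+$ and of $\otimes$ in $L$. The crux of the whole argument is the equality of the middle coordinates,
$$(a\otimes b)+((a+b)\otimes c)=(a\otimes(b+c))+(b\otimes c),$$
which is exactly where Lemma \ref{Idass} enters: applying it with $(x,y,z)=(b,c,a)$ and rearranging by commutativity of $+$ and $\otimes$ converts the right-hand side into the left-hand side.

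For the zero-sum-free property, suppose $\mathbf a+\mathbf b=(0)$. Writing $L\subseteq\prod_iC_i$ as a subdirect product of chains and using that each projection $\pi_i$ is a BL-homomorphism, this yields $\mathbf a_i+\mathbf b_i=(0)$ in every $C_i$. By Proposition \ref{goodschain} and Remark \ref{sumc}, with $\mathbf a_i=(1^{p_i},\alpha_i)$ and $\mathbf b_i=(1^{q_i},\beta_i)$ one has $\mathbf a_i+\mathbf b_i=(1^{p_i+q_i},\alpha_i+\beta_i,\alpha_i\otimes\beta_i)$. Equality with $(0)$ forces the first coordinate to vanish, hence $p_i=q_i=0$, and then $\alpha_i+\beta_i=0$; since $\alpha_i,\beta_i\le\alpha_i+\beta_i$ (a basic property of $+$), we get $\alpha_i=\beta_i=0$. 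Thus $\mathbf a_i=\mathbf b_i=(0)$ for all $i$, and as a sequence in $L$ is determined by its projections, $\mathbf a=\mathbf b=(0)$.
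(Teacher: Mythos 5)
Your proof is correct and follows essentially the same route as the paper's: associativity is reduced to BL-chains via Lemma \ref{goodass}, good sequences there are written as $(1^p,a)$ using Proposition \ref{goodschain}, and the crux is exactly the middle-coordinate identity supplied by Lemma \ref{Idass} together with Lemma \ref{goodchain}(v). The only divergences are organizational: the paper computes directly with $(1^p,a)$, $(1^q,b)$, $(1^r,c)$ rather than first peeling off the leading $1$'s, and it obtains the zero-sum-free property immediately from $a_i,b_i\le(\mathbf{a}+\mathbf{b})_i$ without passing to the subdirect decomposition.
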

\begin{proof}
It remains to prove that the addition of good sequences is associative. By Lemma \ref{goodass}, it is enough to show this for BL-chains. Let $C$ be a BL-chain, and let $\mathbf{a}, \mathbf{b}, \mathbf{c}$ in $C$, then by Lemma \ref{goodschain}, there exist integers $p,q,r\geq 0$ and $a,b,c\in C$ such that $\mathbf{a}=(1^p,a)$, $\mathbf{b}=(1^q,b)$, $\mathbf{c}=(1^r,c)$. Using the identies of Lemma \ref{goodchain}(v) and Lemma \ref{Idass}, we have:
$$
\begin{aligned}
(\mathbf{a}+ \mathbf{b})+\mathbf{c}&=(1^{p+q},a+b,a\otimes b)+(1^r,c)\\
&=(1^{p+q+r}, a\otimes b+a+b+c, a\otimes b+(a+b)\otimes c, a\otimes b\otimes c)\\
&=(1^{p+q+r}, a+b+c, a\otimes (b+c)+(b\otimes c), a+b+c+b\otimes c)\\
&=(1^{p},a)+(1^{q+r}, b+c, b\otimes c)\\
&=\mathbf{a}+(\mathbf{b}+\mathbf{c})
\end{aligned}
$$
Therefore, the addition $+$ is associative in $M_L$.\\
On the other hand, if $\mathbf{a}+\mathbf{b}=(0)$, by definition of the addition for all $i$, $a_i, b_i\leq (\mathbf{a}+\mathbf{b})_i=0$. Hence $a_i=b_i=0$ for all $i$.
\end{proof}
\section{The Chang's group of a BL-algebra}
Using the Grothendieck construction of a group from a commutative monoid, we define on $M_L\times M_L$ the relation $\sim$ by $(\mathbf{a}, \mathbf{b})\sim (\mathbf{c}, \mathbf{d})$ if and only if there exists $\mathbf{k}\in M_L$ such that $\mathbf{a}+\mathbf{d}+\mathbf{k}=\mathbf{b}+\mathbf{c}+\mathbf{k}$. Then, it is easy to see that $\sim$ is an equivalence relation on $M_L\times M_L$. Let $G_L=M_L\times M_L/\sim$ the factor set of $M_L\times M_L$ by $\sim$. If we denote the equivalence class of $(\mathbf{a}, \mathbf{b})$ by $[\mathbf{a}, \mathbf{b}]$, one has a well-defined operation $+$ on $G_L$ given by $[\mathbf{a}, \mathbf{b}]+[\mathbf{c}, \mathbf{d}]=[\mathbf{a+c}, \mathbf{b+d}]$. Moreover, $(G_L, +, [(0), (0)])$ is an Abelian group where $-[\mathbf{a}, \mathbf{b}]=[\mathbf{b}, \mathbf{a}]$. We shall refer to this group as the Chang's group of the BL-algebra $L$. This terminology is motivated by the fact that when $L$ is an MV-algebra, this group coincides with the well-known Chang's $\ell$-group of an MV-algebra.\\
Our next goal is to add an order structure on $G_L$.\\
We start with the order $\leq$ on the monoid $(M_L,+)$ defined by:  $\mathbf{a}, \mathbf{b}\in M_L$; $\mathbf{a}\leq \mathbf{b}$ if $a_i\leq b_i$ for all $i$. It is clear that $\leq$ is a partial order on $M_L$. As the next result show, $\leq$ is actually a lattice order. 
\begin{prop}
$(M_L,\leq)$ is a lattice where for every $\mathbf{a}, \mathbf{b}\in M_L$:\\
1. $\mathbf{a}\vee \mathbf{b}=(a_1\vee b_1, a_2\vee b_2,\ldots, a_n\vee b_n,\ldots)$ and\\
2. $\mathbf{a}\wedge \mathbf{b}=(a_1\wedge b_1, a_2\wedge b_2,\ldots, a_n\wedge b_n,\ldots)$.
\end{prop}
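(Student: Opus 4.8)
The plan is to verify two separate things: first, that the componentwise join $\mathbf{a}\vee\mathbf{b}=(a_1\vee b_1,a_2\vee b_2,\ldots)$ and the componentwise meet $\mathbf{a}\wedge\mathbf{b}=(a_1\wedge b_1,a_2\wedge b_2,\ldots)$ of two good sequences are themselves good sequences, so that they genuinely lie in $M_L$; and second, that these sequences are respectively the supremum and infimum of $\{\mathbf{a},\mathbf{b}\}$ for the componentwise order $\leq$. The second part is purely formal and I would dispatch it quickly: since $\leq$ is defined componentwise and each $a_i\vee b_i$ (resp.\ $a_i\wedge b_i$) is the least upper bound (resp.\ greatest lower bound) of $a_i,b_i$ in the lattice $L$, the sequence $\mathbf{a}\vee\mathbf{b}$ dominates both $\mathbf{a}$ and $\mathbf{b}$, and any $\mathbf{d}\in M_L$ with $\mathbf{d}\geq\mathbf{a}$ and $\mathbf{d}\geq\mathbf{b}$ satisfies $d_i\geq a_i\vee b_i$ for every $i$, whence $\mathbf{d}\geq\mathbf{a}\vee\mathbf{b}$; the argument for $\wedge$ is dual. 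Thus the whole difficulty is the closure statement, namely that $M_L$ is closed under componentwise $\vee$ and $\wedge$.

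To establish closure I would reduce to BL-chains via the subdirect representation, in the same spirit as Lemma \ref{goodc}. Fix a subdirect embedding $L\subseteq\prod_iC_i$ into BL-chains with projections $\pi_i$. Since $L$ is a subalgebra it is closed under $\vee$ and $\wedge$, so $\mathbf{a}\vee\mathbf{b}$ and $\mathbf{a}\wedge\mathbf{b}$ are sequences in $L$; and because each $\pi_i$ is a BL-homomorphism, $\pi_i(\mathbf{a}\vee\mathbf{b})=\mathbf{a}_i\vee\mathbf{b}_i$ and $\pi_i(\mathbf{a}\wedge\mathbf{b})=\mathbf{a}_i\wedge\mathbf{b}_i$ are the componentwise join and meet taken inside the chain $C_i$. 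By Lemma \ref{goodc}(1) it then suffices to check that in each chain $C_i$ the componentwise join and meet of two good sequences are good, together with the existence of a uniform tail bound. The uniform bound is immediate: if $\pi_i(a_n)=0$ for all $n>m_a$ and all $i$, and $\pi_i(b_n)=0$ for all $n>m_b$, then for $n>\max(m_a,m_b)$ both $\pi_i(a_n)\vee\pi_i(b_n)$ and $\pi_i(a_n)\wedge\pi_i(b_n)$ vanish.

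The core computation therefore takes place in a single BL-chain $C$, where I would invoke Proposition \ref{goodschain} to write the two good sequences as $(1^p,a)$ and $(1^q,b)$ with, say, $p\leq q$ and $a,b\in C$. A direct inspection of components shows that when $p<q$ one has $(1^p,a)\vee(1^q,b)=(1^q,b)$ and $(1^p,a)\wedge(1^q,b)=(1^p,a)$, while when $p=q$ one obtains $(1^p,a\vee b)$ and $(1^p,a\wedge b)$ respectively; in every case the result is visibly of the form $(1^r,c)$ and hence a good sequence, since the only defining relations to check, $1+c=1$ and $c+0=c$, hold because $0$ is neutral and $1$ is absorbing for $+$. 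The expected main obstacle is precisely this closure under the lattice operations: once the chain case and the uniform tail bound are secured, Lemma \ref{goodc}(1) yields $\mathbf{a}\vee\mathbf{b},\mathbf{a}\wedge\mathbf{b}\in M_L$, and the least-upper-bound and greatest-lower-bound verifications are then routine, completing the proof.
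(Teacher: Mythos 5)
Your proposal is correct and follows essentially the same route as the paper: reduce the supremum/infimum verification to a formality of the componentwise order, reduce closure of $M_L$ under componentwise $\vee$ and $\wedge$ to the BL-chain case via the subdirect-product lemma, and then check the chain case by writing the good sequences as $(1^p,a)$ and $(1^q,b)$ and inspecting the cases $p<q$, $p=q$, $p>q$. Your treatment is in fact slightly more explicit than the paper's on the reduction step (the uniform tail bound and the compatibility of the projections with the lattice operations), but the argument is the same.
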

\begin{proof}
Note that it is enough to prove that if $\mathbf{a}, \mathbf{b}$ are good sequences in $L$, so are $(a_1\vee b_1, a_2\vee b_2,\ldots, a_n\vee b_n,\ldots)$ and $(a_1\wedge b_1, a_2\wedge b_2,\ldots, a_n\wedge b_n,\ldots)$. This is because, since the order $\leq$ is coordinates-wise, it would therefore automatic that the formulae provide the supremum and the infimum.\\
1. Because of Lemma \ref{goodc}, we may assume that $L$ is a BL-chain. Let $\mathbf{a}, \mathbf{b}$ be good sequences in $L$, then $\mathbf{a}=(1^p,a)$ and $\mathbf{b}=(1^q,b)$ for some integers $p,q\geq 0$ and $a,b\in L$. Then,
 $$
 \begin{aligned}
(a_1\vee b_1, a_2\vee b_2,\ldots, a_n\vee b_n,\ldots) =
\left\{\begin{array}{ll}
  (1^q,b) & \ \ \mbox{if} \ \ p<q;\\
 (1^p,a)&  \ \ \mbox{if} \ \ p>q;\\
  (1^p,a\vee b)& \ \ \mbox{if} \ \ p=q.\\
\end{array}\right.\\
  \end{aligned}
 $$
The resulting sequence is clearly a good sequence.\\
2. The proof is similar to that of 1.
\end{proof}
We have the following important property of the monoid $(M_L,+)$.
\begin{prop}\label{plus-dist}
Let $L$ be a BL-algebra and $\mathbf{a}, \mathbf{b}, \mathbf{c}$ be good sequences in $L$, then:
$$(\mathbf{a}\vee \mathbf{b})+ \mathbf{c}=(\mathbf{a}+\mathbf{c})\vee (\mathbf{b}+\mathbf{c})\; \ \ \text{and}\ \ \; (\mathbf{a}\wedge \mathbf{b})+ \mathbf{c}=(\mathbf{a}+\mathbf{c})\wedge (\mathbf{b}+\mathbf{c})$$
\end{prop}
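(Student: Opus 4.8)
The plan is to follow the reduction-to-chains strategy used throughout this section. Because the lattice operations $\vee$ and $\wedge$ on good sequences are taken coordinate-wise, and the sum $\mathbf{a}+\mathbf{b}$ is defined coordinate-wise from the operations $+$ and $\otimes$ of $L$, every projection $\pi_i$ of a subdirect representation $L\subseteq\prod_iC_i$ sends the good sequences $\mathbf{a},\mathbf{b},\mathbf{c}$ to good sequences $\mathbf{a}_i,\mathbf{b}_i,\mathbf{c}_i$ in $C_i$ and commutes with each of these operations, exactly as recorded in Lemma \ref{goodc} and Lemma \ref{goodass}. Hence each of the two identities holds in $M_L$ if and only if the corresponding identity holds in $M_{C_i}$ for every $i$, and it suffices to treat the case where $L$ is a BL-chain.

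The key observation in the chain case is that $(M_C,\leq)$ is totally ordered. Indeed, by Proposition \ref{goodschain} every good sequence of a BL-chain $C$ has the form $(1^p,a)$; given two such sequences $(1^p,a)$ and $(1^q,b)$, if $p\ne q$ then the one with fewer leading $1$'s is dominated coordinate-wise by the other, while if $p=q$ the comparison reduces to comparing $a$ with $b$, which is settled because $C$ is a chain. Thus any two good sequences of $C$ are comparable; this is consistent with the explicit formulas of the previous proposition, in which $\mathbf{a}\vee\mathbf{b}$ is in every case simply the larger of the two sequences.

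With $M_C$ totally ordered, both distributivity laws become immediate. Assuming without loss of generality that $\mathbf{a}\leq\mathbf{b}$, we have $\mathbf{a}\vee\mathbf{b}=\mathbf{b}$ and $\mathbf{a}\wedge\mathbf{b}=\mathbf{a}$. Monotonicity of $\otimes$ (a BL-axiom) together with the order-compatibility of $+$ shows, coordinate by coordinate, that the sum of good sequences is monotone, so $\mathbf{a}\leq\mathbf{b}$ forces $\mathbf{a}+\mathbf{c}\leq\mathbf{b}+\mathbf{c}$; therefore $(\mathbf{a}+\mathbf{c})\vee(\mathbf{b}+\mathbf{c})=\mathbf{b}+\mathbf{c}$ and $(\mathbf{a}+\mathbf{c})\wedge(\mathbf{b}+\mathbf{c})=\mathbf{a}+\mathbf{c}$. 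Comparing, both sides of the join identity equal $\mathbf{b}+\mathbf{c}$ and both sides of the meet identity equal $\mathbf{a}+\mathbf{c}$, finishing the chain case and hence the proposition. The only step needing genuine care is the reduction, but this is already furnished by Lemmas \ref{goodc} and \ref{goodass}; the real simplification is recognizing that over a chain the monoid of good sequences is totally ordered, which collapses distributivity to triviality.
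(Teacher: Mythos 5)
Your proof is correct, and while the reduction to BL-chains via Lemma \ref{goodc} is exactly the paper's first step, your treatment of the chain case is genuinely different and simpler. The paper keeps the explicit normal forms $(1^p,a)$, $(1^q,b)$, $(1^r,c)$, computes $\mathbf{a}\vee\mathbf{b}$ by cases on $p<q$, $p=q$, $p>q$ (with subcases $p<q-1$ and $p=q-1$), and in the $p=q$ case invokes the element-level distributivity $(x\vee y)+z=(x+z)\vee(y+z)$ of Lemma \ref{goodchain}(vii). You instead observe that $(M_C,\leq)$ is totally ordered --- which follows from Proposition \ref{goodschain} exactly as you argue, and which the paper itself only records later, in the proof of Lemma \ref{chain-group} --- and that the sum of good sequences is monotone in each argument because each coordinate $a_i+(a_{i-1}\otimes b_1)+\cdots+b_i$ is built from the monotone operations $\otimes$ and $+$. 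With both sides of each identity then collapsing to $\mathbf{b}+\mathbf{c}$ (resp.\ $\mathbf{a}+\mathbf{c}$) when $\mathbf{a}\leq\mathbf{b}$, no case analysis is needed. What your route buys is brevity and the avoidance of the explicit sum formula $(1^p,a)+(1^r,c)=(1^{p+r},a+c,a\otimes c)$; what the paper's route buys is that the same computation pattern is reused verbatim in Theorem \ref{lattice-o} and the examples, and it makes the role of Lemma \ref{goodchain}(vii) visible. Both are complete proofs; yours is arguably the cleaner one, since even the paper's Case 2 ultimately rests on the same ``without loss of generality one of the two is larger'' argument, just pushed down to the level of elements of $C$.
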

\begin{proof}
Again, thanks to Lemma \ref{goodc}, we may assume that $L$ is a BL-chain. In this case, there exist integers $p,q,r\geq 0$ such that $\mathbf{a}=(1^p,a)$, $\mathbf{b}=(1^q,b)$ and $\mathbf{c}=(1^r,c)$. As above, 
 $$
 \begin{aligned}
\mathbf{a}\vee \mathbf{b} =
\left\{\begin{array}{ll}
  (1^q,b) & \ \ \mbox{if} \ \ p<q;\\
 (1^p,a)&  \ \ \mbox{if} \ \ p>q;\\
  (1^p,a\vee b)& \ \ \mbox{if} \ \ p=q.\\
\end{array}\right.\\
  \end{aligned}
 $$
Now, we consider the following cases.\\
Case 1: If $p<q$, then $(\mathbf{a}\vee \mathbf{b})+ \mathbf{c}=(1^{q+r}, b+x, b\otimes x)$.\\
Subcase 1.1: If $p<q-1$, then $(\mathbf{a}+\mathbf{c})\vee (\mathbf{b}+\mathbf{c})=(1^{q+r}, b+x, b\otimes x)$ and the equation is clear.\\
Subcase 1.2: If $p=q-1$, then $(\mathbf{a}+\mathbf{c})\vee (\mathbf{b}+\mathbf{c})=(1^{q+r}, (a\otimes x)\vee(b+x), b\otimes x)=(1^{q+r}, b+x, b\otimes x)$ since $a\otimes x\leq b+x$. Therefore, the equality is again verified.\\
Case 2: If $p=q$, then $(\mathbf{a}\vee \mathbf{b})+ \mathbf{c}=(1^{p+r}, (a\vee b)+x, (a\vee b)\otimes x)=(1^{p+r}, (a+x)\vee (b+x), (a\otimes x)\vee (b\otimes x))$. The last equality is due to Lemma \ref{goodchain} (vii). The equation holds because $(1^{p+r}, (a+x)\vee (b+x), (a\otimes x)\vee (b\otimes x))=(\mathbf{a}+\mathbf{c})\vee (\mathbf{b}+\mathbf{c})$.\\
Case 3: The case $p>q$ is symmetric to case 1.\\
The proof of the second equation is similar to the above.
\end{proof}
We can consider the order on $G_L$ as follows. 
\begin{prop}
Let $\preceq$ be the relation defined on $G_L$ by $[\mathbf{a}, \mathbf{b}]\preceq [\mathbf{c}, \mathbf{d}]$ if and only if there exists $\mathbf{k}\in M_L$ such that $\mathbf{a}+\mathbf{d}+\mathbf{k}\leq \mathbf{b}+\mathbf{c}+\mathbf{k}$.\\
Then $\preceq$ is a partial order relation on $G_L$ that is translation invariant.
\end{prop}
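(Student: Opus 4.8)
The plan is to verify in turn that $\preceq$ is well-defined, reflexive, antisymmetric, transitive, and translation invariant, all worked out inside the commutative monoid $(M_L,+)$ from Proposition \ref{assoc}. The single fact that makes everything go through is that the monoid addition is compatible with the coordinatewise order $\leq$: if $\mathbf{a}\leq \mathbf{b}$ then $\mathbf{a}+\mathbf{c}\leq \mathbf{b}+\mathbf{c}$. I would extract this immediately from Proposition \ref{plus-dist}, since $\mathbf{a}\leq\mathbf{b}$ gives $\mathbf{a}\vee\mathbf{b}=\mathbf{b}$ and hence $(\mathbf{a}+\mathbf{c})\vee(\mathbf{b}+\mathbf{c})=(\mathbf{a}\vee\mathbf{b})+\mathbf{c}=\mathbf{b}+\mathbf{c}$. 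I would also use freely that $\leq$ on $M_L$ is itself a partial order (being coordinatewise the BL-order), and that $+$ on $M_L$ is commutative and associative.

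The most delicate point, and the one I would address first, is that $\preceq$ does not depend on the chosen representatives. Suppose $[\mathbf{a},\mathbf{b}]=[\mathbf{a}',\mathbf{b}']$ and $[\mathbf{c},\mathbf{d}]=[\mathbf{c}',\mathbf{d}']$, so there are $\mathbf{m},\mathbf{n}\in M_L$ with $\mathbf{a}+\mathbf{b}'+\mathbf{m}=\mathbf{b}+\mathbf{a}'+\mathbf{m}$ and $\mathbf{c}+\mathbf{d}'+\mathbf{n}=\mathbf{d}+\mathbf{c}'+\mathbf{n}$. Starting from a witness $\mathbf{k}$ for $\mathbf{a}+\mathbf{d}+\mathbf{k}\leq\mathbf{b}+\mathbf{c}+\mathbf{k}$, I would add $\mathbf{b}'+\mathbf{d}'+\mathbf{m}+\mathbf{n}$ to both sides and then substitute the two equalities to rewrite the left side as $\mathbf{a}'+\mathbf{d}'+\mathbf{k}'$ and the right side as $\mathbf{b}'+\mathbf{c}'+\mathbf{k}'$, where $\mathbf{k}'=\mathbf{b}+\mathbf{d}+\mathbf{m}+\mathbf{n}+\mathbf{k}$. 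This exhibits $[\mathbf{a}',\mathbf{b}']\preceq[\mathbf{c}',\mathbf{d}']$, and symmetry gives the converse. This bookkeeping with witnesses is the heart of the Grothendieck construction and is where I expect the only real care to be needed.

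The remaining axioms are then short. Reflexivity uses $\mathbf{k}=(0)$ together with commutativity. For antisymmetry, given witnesses $\mathbf{k},\mathbf{l}$ for the two inequalities, I would set $\mathbf{j}=\mathbf{k}+\mathbf{l}$; adding $\mathbf{l}$ to the first inequality and $\mathbf{k}$ to the second yields both $\mathbf{a}+\mathbf{d}+\mathbf{j}\leq\mathbf{b}+\mathbf{c}+\mathbf{j}$ and the reverse inequality with the same $\mathbf{j}$, whence antisymmetry of $\leq$ on $M_L$ forces $\mathbf{a}+\mathbf{d}+\mathbf{j}=\mathbf{b}+\mathbf{c}+\mathbf{j}$, i.e.\ $[\mathbf{a},\mathbf{b}]=[\mathbf{c},\mathbf{d}]$. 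For transitivity, with witnesses $\mathbf{k},\mathbf{l}$ I would take $\mathbf{j}=\mathbf{d}+\mathbf{k}+\mathbf{l}$ and chain the two suitably augmented inequalities through the common middle term $\mathbf{b}+\mathbf{c}+\mathbf{f}+\mathbf{k}+\mathbf{l}=\mathbf{c}+\mathbf{f}+\mathbf{l}+\mathbf{b}+\mathbf{k}$.

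Finally, translation invariance is immediate: from $\mathbf{a}+\mathbf{d}+\mathbf{k}\leq\mathbf{b}+\mathbf{c}+\mathbf{k}$, adding $\mathbf{e}+\mathbf{f}$ to both sides and keeping the same witness $\mathbf{k}$ gives exactly $[\mathbf{a}+\mathbf{e},\mathbf{b}+\mathbf{f}]\preceq[\mathbf{c}+\mathbf{e},\mathbf{d}+\mathbf{f}]$, which is $[\mathbf{a},\mathbf{b}]+[\mathbf{e},\mathbf{f}]\preceq[\mathbf{c},\mathbf{d}]+[\mathbf{e},\mathbf{f}]$. Throughout, compatibility of $+$ with $\leq$ is what licenses ``adding to both sides,'' so no step goes beyond the monoid structure already in hand, and the only genuinely careful argument is the representative-independence in the second paragraph.
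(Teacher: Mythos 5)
Your verification is correct and is exactly the ``routine verification depending only on the construction of the group from a commutative monoid'' that the paper invokes without writing out: well-definedness via adding the two witnessing equalities, antisymmetry and transitivity by combining witnesses, and translation invariance by adding to both sides, all licensed by the compatibility of $+$ with $\leq$ on $M_L$ (which you correctly extract from Proposition \ref{plus-dist}). No discrepancy with the paper's approach; you have simply supplied the details it omits.
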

\begin{proof}
The proof is a routine verification that depends only on the construction of the group from any commutative monoid.
\end{proof}
The partial order $\preceq$ induces a lattice structure on $G_L$ as we now prove.
\begin{thm}\label{lattice-o}
For every BL-algebra $L$, the partially ordered group $G_L$ is a lattice ordered group where,
$$[\mathbf{a},\mathbf{b}]\vee [\mathbf{c},\mathbf{d}]=[(\mathbf{a}+\mathbf{d})\vee(\mathbf{b}+\mathbf{c}), \mathbf{b}+\mathbf{d}]\ \ \; \text{and}\ \ \; [\mathbf{a},\mathbf{b}]\wedge [\mathbf{c},\mathbf{d}]=[(\mathbf{a}+\mathbf{d})\wedge(\mathbf{b}+\mathbf{c}), \mathbf{b}+\mathbf{d}]$$
\end{thm}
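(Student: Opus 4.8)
The plan is to verify directly that the two displayed expressions are respectively the supremum and the infimum of $[\mathbf{a},\mathbf{b}]$ and $[\mathbf{c},\mathbf{d}]$ with respect to $\preceq$; once every pair of elements has a join and a meet, $G_L$ is a lattice, and since we already know $\preceq$ is a translation-invariant partial order, it is an $\ell$-group. (Alternatively, by Proposition \ref{lat-group} it would suffice to exhibit $z\vee [(0),(0)]$ for each $z$ and then invoke the $\ell$-group identities, but the direct verification delivers the explicit formulas as well, and the argument for the meet is perfectly symmetric.) Write $x=[\mathbf{a},\mathbf{b}]$, $y=[\mathbf{c},\mathbf{d}]$, and let $J=[(\mathbf{a}+\mathbf{d})\vee(\mathbf{b}+\mathbf{c}),\mathbf{b}+\mathbf{d}]$ be the candidate join. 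Note that once $J$ is shown to be the necessarily unique supremum, its independence from the chosen representatives is automatic, so no separate well-definedness check is needed.

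First I would check that $J$ is an upper bound. Unwinding the definition of $\preceq$ and taking the witness $\mathbf{k}=(0)$, the relation $x\preceq J$ reduces to $\mathbf{a}+\mathbf{b}+\mathbf{d}\leq\mathbf{b}+\big((\mathbf{a}+\mathbf{d})\vee(\mathbf{b}+\mathbf{c})\big)$, which holds because $\mathbf{a}+\mathbf{d}\leq(\mathbf{a}+\mathbf{d})\vee(\mathbf{b}+\mathbf{c})$ and $+$ is monotone on $M_L$; the relation $y\preceq J$ is handled the same way using $\mathbf{b}+\mathbf{c}\leq(\mathbf{a}+\mathbf{d})\vee(\mathbf{b}+\mathbf{c})$. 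This half is routine.

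The crux is showing that $J$ is the \emph{least} upper bound, and this is where Proposition \ref{plus-dist} (distributivity of $+$ over $\vee$ in $M_L$) is essential. Let $[\mathbf{e},\mathbf{f}]$ be an arbitrary upper bound of $x$ and $y$, so there are $\mathbf{k}_1,\mathbf{k}_2\in M_L$ with $\mathbf{a}+\mathbf{f}+\mathbf{k}_1\leq\mathbf{b}+\mathbf{e}+\mathbf{k}_1$ and $\mathbf{c}+\mathbf{f}+\mathbf{k}_2\leq\mathbf{d}+\mathbf{e}+\mathbf{k}_2$. Setting $\mathbf{k}=\mathbf{k}_1+\mathbf{k}_2$ and adding $\mathbf{d}+\mathbf{k}_2$ to the first inequality and $\mathbf{b}+\mathbf{k}_1$ to the second brings both to the common right-hand side $\mathbf{b}+\mathbf{d}+\mathbf{e}+\mathbf{k}$, giving $\mathbf{a}+\mathbf{d}+\mathbf{f}+\mathbf{k}\leq\mathbf{b}+\mathbf{d}+\mathbf{e}+\mathbf{k}$ and $\mathbf{b}+\mathbf{c}+\mathbf{f}+\mathbf{k}\leq\mathbf{b}+\mathbf{d}+\mathbf{e}+\mathbf{k}$. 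Now by Proposition \ref{plus-dist}, $\big((\mathbf{a}+\mathbf{d})\vee(\mathbf{b}+\mathbf{c})\big)+\mathbf{f}+\mathbf{k}=\big((\mathbf{a}+\mathbf{d})+\mathbf{f}+\mathbf{k}\big)\vee\big((\mathbf{b}+\mathbf{c})+\mathbf{f}+\mathbf{k}\big)$, and since each term of this join is $\leq\mathbf{b}+\mathbf{d}+\mathbf{e}+\mathbf{k}$, so is the join itself; this is exactly the witnessed inequality certifying $J\preceq[\mathbf{e},\mathbf{f}]$. Hence $J=x\vee y$. The meet formula follows by the dual argument, replacing $\vee$ by $\wedge$ throughout and using the second identity of Proposition \ref{plus-dist}; the key combinatorial move --- choosing $\mathbf{k}=\mathbf{k}_1+\mathbf{k}_2$ so that the two bounding inequalities acquire a common side --- is identical, and the distributivity lemma is again what converts the two separate inequalities into a single statement about the lattice combination. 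I expect this combination step, rather than any individual computation, to be the only real subtlety.
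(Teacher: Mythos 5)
Your proof is correct and is essentially the paper's argument: the same two ingredients (the common witness $\mathbf{k}=\mathbf{k}_1+\mathbf{k}_2$ bringing both upper-bound inequalities to a shared right-hand side, followed by Proposition \ref{plus-dist} to fold them into one inequality for the join) carry all the weight in both versions. The only difference is presentational --- the paper first establishes the special case $[\mathbf{a},\mathbf{b}]\vee[(0),(0)]=[\mathbf{a}\vee\mathbf{b},\mathbf{b}]$ and then translates via Proposition \ref{lat-group} to get the general formula, whereas you verify the general formula directly.
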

\begin{proof}
To show that $G_L$ is a lattice ordered group, it is enough by Proposition \ref{lat-group} to show that for every $\mathbf{a}, \mathbf{b}\in M_L$, $[\mathbf{a},\mathbf{b}]\vee [(0),(0)]$ exists in $(G_L, \preceq)$. We claim that $[\mathbf{a},\mathbf{b}]\vee [(0),(0)]=[\mathbf{a}\vee \mathbf{b}, \mathbf{b}]$. First, that $[\mathbf{a}\vee \mathbf{b}, \mathbf{b}]$ is an upper bound of $[\mathbf{a},\mathbf{b}]$ and $[(0),(0)]$ is simply because $\mathbf{b}\leq \mathbf{a}\vee \mathbf{b}$ and $\mathbf{a}+\mathbf{b}\leq (\mathbf{a}\vee\mathbf{b})+\mathbf{b}$. On the other hand, suppose that $[\mathbf{u}, \mathbf{v}]$ is any upper bound of $[\mathbf{a},\mathbf{b}]$ and $[(0),(0)]$. Then, there exist $\mathbf{k}_1, \mathbf{k}_2\in M_L$ such that $\mathbf{a}+\mathbf{v}+\mathbf{k}_1\leq \mathbf{b}+\mathbf{u}+\mathbf{k}_1$ and $\mathbf{v}+\mathbf{k}_2\leq \mathbf{u}+\mathbf{k}_2$. Hence, $\mathbf{a}+\mathbf{v}+\mathbf{k}_1+\mathbf{k}_2\leq \mathbf{b}+\mathbf{u}+\mathbf{k}_1+\mathbf{k}_2$ and $\mathbf{b}+\mathbf{v}+\mathbf{k}_1+\mathbf{k}_2\leq \mathbf{b}+ \mathbf{u}+\mathbf{k}_1+\mathbf{k}_2$. Thus, $(\mathbf{a}+\mathbf{v}+\mathbf{k}_1+\mathbf{k}_2)\vee (\mathbf{b}+\mathbf{v}+\mathbf{k}_1+\mathbf{k}_2)\leq \mathbf{b}+ \mathbf{u}+\mathbf{k}_1+\mathbf{k}_2$. But, by Proposition \ref{plus-dist} $(\mathbf{a}+\mathbf{v}+\mathbf{k}_1+\mathbf{k}_2)\vee (\mathbf{b}+\mathbf{v}+\mathbf{k}_1+\mathbf{k}_2)=(\mathbf{a}\vee \mathbf{b})+\mathbf{v}+\mathbf{k}_1+\mathbf{k}_2)\leq \mathbf{b}+ \mathbf{u}+\mathbf{k}_1+\mathbf{k}_2$. Therefore, $[\mathbf{a}\vee \mathbf{b}, \mathbf{b}]\preceq [\mathbf{u}, \mathbf{v}]$ as needed. \\
Finally, if $[\mathbf{a},\mathbf{b}], [\mathbf{c},\mathbf{d}]\in G_L$, combining the preceding special case and the argument in the proof of Proposition \ref{lat-group} yield, 
\[
\begin{aligned}
\left[\mathbf{a},\mathbf{b}\right] \vee [\mathbf{c},\mathbf{d}]&=(([\mathbf{a},\mathbf{b}]+ [\mathbf{d},\mathbf{c}])\vee [(0),(0)])+[\mathbf{c}, \mathbf{d}]\\
&=[(\mathbf{a}+\mathbf{d})\vee (\mathbf{b}+\mathbf{c}),\mathbf{b}+\mathbf{c}]+[\mathbf{c},\mathbf{d}]\\
&=[((\mathbf{a}+\mathbf{d})\vee (\mathbf{b}+\mathbf{c}))+\mathbf{c},\mathbf{b}+\mathbf{c}+\mathbf{d}]\\
&=[((\mathbf{a}+\mathbf{d})\vee (\mathbf{b}+\mathbf{c})),\mathbf{b}+\mathbf{d}]
\end{aligned}
\]
The proof for the greatest lower bound is similar and we leave it as an exercise. 
\end{proof}
\begin{rem}
The standard proof of Theorem \ref{lattice-o} for MV-algebras relies on the fact that if $A$ is an MV-algebra and $\mathbf{a}\leq \mathbf{b}$ in $M_A$ if and only if there exists $\mathbf{c}\in M_A$ such that $\mathbf{b}=\mathbf{a}+\mathbf{c}$ \cite[Prop. 2.3.2]{C2}. However, this condition does not hold for general BL-algebras, not even for those of cancellative type. For instance, when $L=[0,1]$ is equipped with the product structure, there does not exists a good sequence $\mathbf{a}$ in $L$ such that $(.5)=(.25)+\mathbf{a}$.\\
\end{rem}
\begin{prop}\label{strong}
For every BL-algebra $L$, $u_L:=[(1),(0)]$ is a strong unit of the $\ell$-group $G_L$.
\end{prop}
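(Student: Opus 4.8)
The plan is to verify the two conditions defining a strong unit: that $u_L\in G_L^+$, and that every element of $G_L$ is dominated by some positive integer multiple of $u_L$. Positivity of $u_L$ is immediate from the definition of $\preceq$: taking $\mathbf{k}=(0)$, the inequality $(0)+(0)+(0)\le (0)+(1)+(0)$ holds, since componentwise it reduces to $0\le 1$ in $L$; hence $[(0),(0)]\preceq [(1),(0)]=u_L$.

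The computational core is to identify the multiples of $u_L$. Because addition in $G_L$ is componentwise on representatives, $n u_L=[n(1),(0)]$, where $n(1)$ denotes the $n$-fold sum $(1)+\cdots+(1)$ taken in $M_L$. I would show by induction on $n$ that $n(1)=(1^n)$, the good sequence consisting of $n$ ones followed by zeros. The inductive step $(1^n)+(1)=(1^{n+1})$ follows at once from the definition of the addition of sequences: in $c_i=a_i+(a_{i-1}\otimes b_1)+\cdots+b_i$ with $\mathbf{a}=(1^n)$ and $\mathbf{b}=(1)$, every cross term involving $b_j$ with $j\ge 2$ vanishes, the surviving term $a_{i-1}\otimes 1=a_{i-1}$ keeps the first $n+1$ entries equal to $1$ (using $1+1=1$ and $1\otimes 1=1$ in $L$), and all later entries remain $0$.

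With $n u_L=[(1^n),(0)]$ in hand, the strong-unit condition becomes elementary. Given $[\mathbf{a},\mathbf{b}]\in G_L$, the good sequence $\mathbf{a}=(a_1,\dots,a_m)$ has only finitely many nonzero terms, say $a_r=0$ for all $r>m$; since $a_i\le 1$ for every $i$, we get $\mathbf{a}\le (1^m)$ componentwise. Using the fact that $\mathbf{x},\mathbf{y}\le \mathbf{x}+\mathbf{y}$ for good sequences (a componentwise consequence of $x,y\le x+y$ in $L$), we obtain $\mathbf{a}\le (1^m)\le \mathbf{b}+(1^m)$. Taking $\mathbf{k}=(0)$, this is exactly the inequality $\mathbf{a}+(0)+\mathbf{k}\le \mathbf{b}+(1^m)+\mathbf{k}$ witnessing $[\mathbf{a},\mathbf{b}]\preceq [(1^m),(0)]=m u_L$; replacing $m$ by $\max(m,1)$ guarantees $n\ge 1$, as required.

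I expect no serious obstacle. The only step that needs genuine care is the explicit evaluation $n(1)=(1^n)$, after which the argument is just the observation that a good sequence terminates. It is worth noting that the trivial witness $\mathbf{k}=(0)$ already suffices, so none of the Grothendieck-quotient subtleties actually intervene in the verification.
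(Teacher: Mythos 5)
Your proof is correct and follows essentially the same route as the paper: identify $nu_L=[(1^n),(0)]$ and use the fact that a good sequence terminates to bound $\mathbf{a}$ by $(1^n)$. If anything, your chain $\mathbf{a}\le(1^m)\le\mathbf{b}+(1^m)$ is more careful than the paper's, which asserts the identity $\mathbf{b}+(1^n)=(1^n)$ (false in general, e.g.\ $(b)+(1)=(1,b)$) where only the inequality $\mathbf{a}\le\mathbf{b}+(1^n)$ is actually needed.
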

\begin{proof}
Let $[\mathbf{a},\mathbf{b}]\in G_L$, and let $n\geq 1$ be any integer such that $a_i=b_i=0$ for all $i>n$. Then, $n(1)=(1^n)$, $\mathbf{b}+(1^n)=(1^n)$, and $\mathbf{a}\leq (1^n)$. Thus, $[\mathbf{a},\mathbf{b}]\preceq nu_L$ as required.
\end{proof}
The Chang's $\ell$-group of the MV-center of a BL-algebra $L$ seats in a particularly nice way in $G_L$.
\begin{thm}\label{summand}
Let $L$ be a BL-algebra and let $A$ be the MV-center of $L$. Then, the Chang's $\ell$-group $G_A$ of $A$ is a direct summand of $G_L$. Moreover the complement of $G_A$ is an ideal of $G_L$.
\end{thm}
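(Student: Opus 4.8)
The plan is to exhibit $G_A$ as a retract of $G_L$ in the category of $\ell$-groups, with the \emph{double negation} map playing the role of the retraction. First I would record the instance of functoriality that the argument needs: if $f\colon L\to L'$ is a BL-homomorphism, then applying $f$ componentwise sends a good sequence to a good sequence (since $f$ preserves $\otimes$, $0$, the derived operation $+$, and the eventual-vanishing condition), respects the addition of good sequences and the relation $\sim$, and therefore induces a group homomorphism $G_f\colon G_L\to G_{L'}$ given by $[\mathbf{a},\mathbf{b}]\mapsto[f(\mathbf{a}),f(\mathbf{b})]$. Because $f$ preserves $\wedge$ and $\vee$ and the join in $G_L$ is given by the explicit formula of Theorem \ref{lattice-o} in terms of $+$ and the componentwise join of good sequences, $G_f$ preserves $\vee$ and is thus an $\ell$-group homomorphism; this is precisely the functor to $\ell$-groups announced in the introduction, restricted to the maps we use.

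Next I would produce the two homomorphisms that split one another. The inclusion $\iota\colon A\hookrightarrow L$ of the MV-center is a BL-homomorphism, and the double negation $\psi\colon L\to A$, $\psi(x)=\bar{\bar{x}}$, is a surjective BL-homomorphism that restricts to the identity on $A$. Indeed, the identities $\overline{\overline{x\otimes y}}=\bar{\bar{x}}\otimes\bar{\bar{y}}$, $\overline{\overline{x\to y}}=\bar{\bar{x}}\to\bar{\bar{y}}$, $\bar{\bar{0}}=0$ and $\bar{\bar{1}}=1$ show that $\psi$ preserves the BL-operations, while $\bar{\bar{\bar{x}}}=\bar{x}$ shows that the image of $\psi$ is exactly $A=MV(L)$ and that $\psi$ fixes $A$ pointwise. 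Hence $\psi\circ\iota=\mathrm{id}_A$, and applying the functor yields $\ell$-group homomorphisms $\iota_*\colon G_A\to G_L$ and $\psi_*\colon G_L\to G_A$ with $\psi_*\circ\iota_*=\mathrm{id}_{G_A}$.

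From $\psi_*\iota_*=\mathrm{id}$ the remainder is formal. This relation forces $\iota_*$ to be injective, and since $\psi_*$ is order preserving, $\iota_*(a)\geq 0$ gives $a=\psi_*\iota_*(a)\geq 0$; hence $\iota_*$ is an order embedding and $\iota_*(G_A)$ is an $\ell$-subgroup of $G_L$ isomorphic to $G_A$. Set $H:=\ker\psi_*$. As the kernel of an $\ell$-group homomorphism, $H$ is a convex $\ell$-subgroup, normal automatically since $G_L$ is abelian, that is, an ideal of $G_L$. The splitting identity $g=\iota_*\psi_*(g)+\bigl(g-\iota_*\psi_*(g)\bigr)$ writes every $g\in G_L$ as a sum of an element of $\iota_*(G_A)$ and an element of $H$, and $\iota_*(G_A)\cap H=0$ because $\iota_*(a)\in H$ forces $a=\psi_*\iota_*(a)=0$. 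Therefore $G_L=\iota_*(G_A)\oplus H$ with $\iota_*(G_A)\cong G_A$ and $H$ an ideal, which is the claimed decomposition.

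I expect the only genuine work to be the functoriality step, specifically verifying that $G_\psi$ and $G_\iota$ are well defined on $\sim$-classes and preserve the lattice operations; everything past the construction of the splitting pair $(\iota_*,\psi_*)$ is the routine retract argument. I would also point out explicitly that $\iota_*(G_A)$ is generally \emph{not} itself an ideal: for $L=[0,1]$ with the product structure one has $A=\{0,1\}$ and $\iota_*(G_A)=\mathbb{Z}\,u_L$, which is not convex. Thus the decomposition is a splitting in which the complement, rather than both summands, is an ideal, exactly as the statement records.
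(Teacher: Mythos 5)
Your argument is correct, and the decomposition it produces is literally the paper's: your retraction $\psi_*$ is the map $[\mathbf{a},\mathbf{b}]\mapsto[\overline{\overline{\mathbf{a}}},\overline{\overline{\mathbf{b}}}]$, your splitting $g=\iota_*\psi_*(g)+(g-\iota_*\psi_*(g))$ is exactly the paper's identity $[\mathbf{a},\mathbf{b}]=[\mathbf{a}+\overline{\overline{\mathbf{b}}},\mathbf{b}+\overline{\overline{\mathbf{a}}}]+[\overline{\overline{\mathbf{a}}},\overline{\overline{\mathbf{b}}}]$, and $\ker\psi_*$ coincides with the paper's complement $\mathfrak{S}(L)=\{[\mathbf{a},\mathbf{b}]:\overline{\overline{\mathbf{a}}}=\overline{\overline{\mathbf{b}}}\}$. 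The difference is in how the pieces are verified. The paper works by hand from the identities $\overline{\overline{\mathbf{a}+\mathbf{b}}}=\overline{\overline{\mathbf{a}}}+\overline{\overline{\mathbf{b}}}$, $\overline{\overline{\mathbf{a}\vee\mathbf{b}}}=\overline{\overline{\mathbf{a}}}\vee\overline{\overline{\mathbf{b}}}$, $\overline{\overline{\mathbf{a}\wedge\mathbf{b}}}=\overline{\overline{\mathbf{a}}}\wedge\overline{\overline{\mathbf{b}}}$, and proves convexity of $\mathfrak{S}(L)$ by a direct computation that invokes cancellativity of the monoid $M_A$. You instead appeal to functoriality of $\Xi$ (which the paper only sets up later, in Section 7; the identities in (10) and Lemma 2.6(viii) are precisely what makes $\psi(x)=\bar{\bar{x}}$ a BL-homomorphism fixing $A$), and then convexity of the complement is free, being the kernel of an $\ell$-group homomorphism. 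Your route also buys injectivity of the canonical map $G_A\to G_L$ -- a priori two pairs of good sequences in $A$ could be identified in $G_L$ by a witness $\mathbf{k}\in M_L\setminus M_A$, a point the paper glosses over -- and it sidesteps checking that the defining condition of $\mathfrak{S}(L)$ is independent of representatives; the cancellativity of $M_A$ is only needed if one wants to identify $\ker\psi_*$ with $\mathfrak{S}(L)$ explicitly, which the statement does not require. Your closing observation that $\iota_*(G_A)$ is generally not convex matches the paper's example with the product structure on $[0,1]$.
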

\begin{proof}
We shall need the following definition and notation. Given a good sequence $\mathbf{a}:=(a_1, a_2, \ldots,a_n)$ in $L$, let $\overline{\mathbf{a}}=(\overline{a_n}, \overline{a_{n-1}}, \ldots, \overline{a_1})$, in particular $\overline{\overline{\mathbf{a}}}=(\overline{\overline{a_1}}, \overline{\overline{a_2}}, \ldots, \overline{\overline{a_n}})$. It is clear from Lemma \ref{goodchain}(viii) that since $\mathbf{a}$ is a good sequence in $L$, so is $\overline{\mathbf{a}}$. Note that with these notations, $G_A=\{[\overline{\mathbf{a}},\overline{\mathbf{b}}]:\mathbf{a}, \mathbf{b}\in M_L\}$. Now, consider $\mathfrak{S}(L):=\{[\mathbf{a}, \mathbf{b}]: \overline{\overline{\mathbf{a}}}=\overline{\overline{\mathbf{b}}}\}$. We shall justify that $\mathfrak{S}(L)$ is a sub-lattice ordered group of $G_L$ and $G_L=G_A\oplus \mathfrak{S}(L)$.\\
One can easily verify the following equations given any $\mathbf{a}, \mathbf{b}$ in $M_L$.
$$
\begin{aligned}
\overline{\overline{\mathbf{a}+\mathbf{b}}}&=\overline{\overline{\mathbf{a}}}+\overline{\overline{\mathbf{b}}}\\
\overline{\overline{\mathbf{a}\vee\mathbf{b}}}&=\overline{\overline{\mathbf{a}}}\vee\overline{\overline{\mathbf{b}}}\\
\overline{\overline{\mathbf{a}\wedge\mathbf{b}}}&=\overline{\overline{\mathbf{a}}}\wedge\overline{\overline{\mathbf{b}}}
\end{aligned}
$$
A simple computation using the equations above shows that $\mathfrak{S}(L)$ is a subgroup of $G_L$. In addition, one can also verify that using the formulas in Theorem \ref{lattice-o}, if $[\mathbf{a},\mathbf{b}], [\mathbf{c},\mathbf{d}]\in \mathfrak{S}(L)$, then $[\mathbf{a},\mathbf{b}]\vee [\mathbf{c},\mathbf{d}]\in \mathfrak{S}(L)$ and $[\mathbf{a},\mathbf{b}]\wedge [\mathbf{c},\mathbf{d}]\in \mathfrak{S}(L)$.\\
 So, $\mathfrak{S}(L)$ is a sub-lattice ordered group of $G_L$. On the other hand, every element $[\mathbf{a},\mathbf{b}]$  can be decomposed as $[\mathbf{a},\mathbf{b}]=[\mathbf{a}+\overline{\overline{\mathbf{b}}},\mathbf{b}+\overline{\overline{\mathbf{a}}}]+[\overline{\overline{\mathbf{a}}},\overline{\overline{\mathbf{b}}}]$. On one hand, it clear that $[\overline{\overline{\mathbf{a}}},\overline{\overline{\mathbf{b}}}]\in G_A$ and on the other hand, it follows from the equations above and the identity $\overline{\overline{\overline{\mathbf{a}}}}= \overline{\mathbf{a}}$ that $[\mathbf{a}+\overline{\overline{\mathbf{b}}},\mathbf{b}+\overline{\overline{\mathbf{a}}}]\in \mathfrak{S}(L)$. Finally, if $[\overline{\mathbf{a}},\overline{\mathbf{b}}]\in \mathfrak{S}(L)$, then $\overline{\mathbf{a}}=\overline{\overline{\overline{\mathbf{a}}}}=\overline{\overline{\overline{\mathbf{b}}}}=\overline{\mathbf{b}}$. Therefore, $[\overline{\mathbf{a}},\overline{\mathbf{b}}]=[(0),(0)]$ and $G_A\cap \mathfrak{S}(L)=\{[(0),(0)]\}$ as required.\\
It remains to show that $\mathfrak{S}(L)$ is an ideal of $G_L$, or equivalently that $\mathfrak{S}(L)$ is convex. For this, let $[\mathbf{a},\mathbf{b}], [\mathbf{c},\mathbf{d}]\in \mathfrak{S}(L)$ and $[\mathbf{e},\mathbf{f}]\in G_L$ such that $[\mathbf{a},\mathbf{b}]\preceq [\mathbf{e},\mathbf{f}]\preceq [\mathbf{c},\mathbf{d}]$. Then there exists $\mathbf{k}_1, \mathbf{k}_2\in M_L$ such that $\mathbf{a}+\mathbf{f}+\mathbf{k}_1\leq \mathbf{b}+\mathbf{e}+\mathbf{k}_1$ and $\mathbf{e}+\mathbf{d}+\mathbf{k}_2\leq \mathbf{c}+\mathbf{f}+\mathbf{k}_2$. Hence, $\overline{\overline{\mathbf{a}}}+\overline{\overline{\mathbf{f}}}+\overline{\overline{\mathbf{k}_1}}\leq \overline{\overline{\mathbf{b}}}+\overline{\overline{\mathbf{e}}}+\overline{\overline{\mathbf{k}_1}}$ and $\overline{\overline{\mathbf{e}}}+\overline{\overline{\mathbf{d}}}+\overline{\overline{\mathbf{k}_2}}\leq \overline{\overline{\mathbf{c}}}+\overline{\overline{\mathbf{f}}}+\overline{\overline{\mathbf{k}_2}}$. Since $\overline{\overline{\mathbf{a}}}=\overline{\overline{\mathbf{b}}}$ and $\overline{\overline{\mathbf{c}}}=\overline{\overline{\mathbf{d}}}$, it follows that $\overline{\overline{\mathbf{e}}}+ \overline{\overline{\mathbf{u}}}\leq \overline{\overline{\mathbf{f}}}+ \overline{\overline{\mathbf{u}}}$ and $\overline{\overline{\mathbf{f}}}+ \overline{\overline{\mathbf{u}}}\leq \overline{\overline{\mathbf{e}}}+ \overline{\overline{\mathbf{u}}}$, where $\mathbf{u}=\mathbf{a}+\mathbf{c}+\mathbf{k}_1+\mathbf{k}_2$. Hence, $\overline{\overline{\mathbf{e}}}+ \overline{\overline{\mathbf{u}}}=\overline{\overline{\mathbf{f}}}+ \overline{\overline{\mathbf{u}}}$, and since $M_A$ is cancellative \cite[Prop. 2.1]{CM}, then $\overline{\overline{\mathbf{e}}}=\overline{\overline{\mathbf{f}}}$ and $[\mathbf{e},\mathbf{f}]\in \mathfrak{S}(L)$. Therefore, $\mathfrak{S}(L)$ is an ideal of $G_L$ as needed.
\end{proof}
It is worth pointing out that unlike $\mathfrak{S}(L)$, $G_A$ is not an ideal of $G_L$ in general. For example, if one considers the BL-chain $L=[0,1]$ with the product structure, which has the two-element Boolean algebra $\mathbf{2}$ as MV-center. Then it is clear that $[(0),(0)]\preceq [(.5),(0)]\preceq [(1),(0)]$ and $[(.5),(0)]\notin G_{\mathbf{2}}$.\\
Note that if $L$ a BL-algebra that is an MV-algebra, then $\mathfrak{S}(L)=0$, and it follows from Theorem \ref{summand} that $G_L$ coincides with the well known Chang's $\ell$-group of an MV-algebra.
\begin{cor}\label{isom}
Let $L$ be a BL-algebra and let $A$ be the MV-center of $L$, then; \\
There exists an ordered-preserving group isomorphism (o-isomorphism) from $G_L$ onto $G_A\times_{lex} \mathfrak{S}(L)$ (the lexicographic product), where $\mathfrak{S}(L)$ is the lattice ordered group above.\\
\end{cor}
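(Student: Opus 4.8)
The plan is to upgrade the internal group decomposition of Theorem \ref{summand} to an order isomorphism. Recall from that proof that $G_L=G_A\oplus\mathfrak{S}(L)$, that $\mathfrak{S}(L)$ is an ideal (hence a convex sub-$\ell$-group), and that every element splits as
\[
[\mathbf{a},\mathbf{b}]=[\mathbf{a}+\overline{\overline{\mathbf{b}}},\mathbf{b}+\overline{\overline{\mathbf{a}}}]+[\overline{\overline{\mathbf{a}}},\overline{\overline{\mathbf{b}}}].
\]
The second summand lies in $G_A$ and the first in $\mathfrak{S}(L)$, so I would take as candidate the map $\Phi([\mathbf{a},\mathbf{b}])=\bigl([\overline{\overline{\mathbf{a}}},\overline{\overline{\mathbf{b}}}],\,[\mathbf{a}+\overline{\overline{\mathbf{b}}},\mathbf{b}+\overline{\overline{\mathbf{a}}}]\bigr)\in G_A\times\mathfrak{S}(L)$. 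Its projection onto $G_A$ is $\rho([\mathbf{a},\mathbf{b}])=[\overline{\overline{\mathbf{a}}},\overline{\overline{\mathbf{b}}}]$, which by the three displayed identities for $\overline{\overline{(\cdot)}}$ in Theorem \ref{summand} together with the monotonicity of $\overline{\overline{(\cdot)}}$ is a surjective $\ell$-homomorphism with kernel $\mathfrak{S}(L)$; hence $\Phi$ is a group isomorphism and $G_L/\mathfrak{S}(L)\cong G_A$ as $\ell$-groups.

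It then remains to prove that $\Phi$ is order preserving and order reflecting when $G_A\times\mathfrak{S}(L)$ carries the lexicographic order. One half is formal: if $[\mathbf{a},\mathbf{b}]\succeq 0$ then $\rho([\mathbf{a},\mathbf{b}])\succeq 0$ since $\rho$ is an $\ell$-homomorphism, and in the case $\rho([\mathbf{a},\mathbf{b}])=0$ the element lies in $\mathfrak{S}(L)$, whose order is the restriction of $\preceq$; either way $\Phi([\mathbf{a},\mathbf{b}])$ is in the lexicographic cone. So $\Phi$ preserves order with essentially no work beyond Theorem \ref{summand}.

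The hard part will be the reverse implication, the domination property that distinguishes a lexicographic product: a strictly positive $G_A$-component must force $[\mathbf{a},\mathbf{b}]\succeq 0$ regardless of the $\mathfrak{S}(L)$-component, i.e. every element of $\mathfrak{S}(L)$ must lie below every strictly positive element of $G_A$. Unwinding the definition of $\preceq$, this asks, given a witness for $\overline{\overline{\mathbf{b}}}\leq\overline{\overline{\mathbf{a}}}$ with $\overline{\overline{\mathbf{a}}}\neq\overline{\overline{\mathbf{b}}}$ in the cancellative monoid $M_A$, for a single $\mathbf{k}\in M_L$ with $\mathbf{b}+\mathbf{k}\leq\mathbf{a}+\mathbf{k}$. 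I would attack this by descending to BL-chains via the subdirect representation (Theorem \ref{bl=s}) and Lemma \ref{goodc}: over a chain $C$ every good sequence has the form $(1^p,c)$ by Proposition \ref{goodschain}, so $G_C$ is totally ordered and splits transparently as $G_{C_0}\times_{lex}\mathfrak{S}(C)$, where $C_0$ is the bottom Wasjberg algebra of the tower in Theorem \ref{bl-chain}; there the inequality $\overline{\overline{\mathbf{b}}}\leq\overline{\overline{\mathbf{a}}}$ is read off from the number of leading $1$'s and from double negation in $C_0$, and a local $\mathbf{k}$ is easy to produce.

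The genuine obstacle I anticipate is the globalization: the same $\mathbf{k}$ must work in every chain factor at once, and the factors where $\rho$ is only weakly positive are precisely where a careless local choice breaks. I would therefore phrase the domination as a single lemma asserting that the positive cone of $G_L$ is recovered chain by chain, and prove it by combining the convexity of the ideal $\mathfrak{S}(L)$, the cancellativity of $M_A$ (\cite[Prop. 2.1]{CM}), and Proposition \ref{plus-dist}, mirroring the final cancellation step in the proof of Theorem \ref{summand}. Once domination is established the order-reflecting half is automatic, the induced suprema and infima agree with the formulas of Theorem \ref{lattice-o}, and $\Phi$ is the desired o-isomorphism onto $G_A\times_{lex}\mathfrak{S}(L)$.
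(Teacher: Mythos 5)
Your first two paragraphs track the paper's own proof: your $\Phi$ is exactly the paper's $\Theta$, its bijectivity is exactly the internal decomposition $G_L=G_A\oplus\mathfrak{S}(L)$ of Theorem \ref{summand}, and your verification that positive elements land in the lexicographic cone is, in substance, all the corollary asks for. The problem is everything after that. The ``domination property'' you isolate as the hard part --- that every element of $\mathfrak{S}(L)$ lies below every strictly positive element of $G_A$, i.e.\ that $\Phi$ is order-reflecting --- is \emph{false} in general, and no globalization lemma will rescue it. The paper's remark immediately following the corollary is the tell: $G_A\times_{lex}\mathfrak{S}(L)$ is not a lattice unless $G_A$ is an $o$-group, whereas $G_L$ is always a lattice by Theorem \ref{lattice-o}; an order-reflecting bijection would transport the lattice structure, so $\Phi$ cannot reflect order whenever $G_A$ is not totally ordered. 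Concretely, let $L=C\times C$ where $C=[0,1]$ carries the product structure, so $A=\mathbf{2}\times\mathbf{2}$ and $G_A\cong\mathbb{Z}^2$. Take $\mathbf{a}=((1,0.5))$ and $\mathbf{b}=((0,1))$. Then $\overline{\overline{\mathbf{b}}}=((0,1))<((1,1))=\overline{\overline{\mathbf{a}}}$, so the $G_A$-component of $\Phi([\mathbf{a},\mathbf{b}])$ is $(1,0)>0$ and $\Phi([\mathbf{a},\mathbf{b}])>_{lex}0$; but $[\mathbf{a},\mathbf{b}]\not\succeq[(0),(0)]$, since projecting a putative witness $\mathbf{k}$ to the second chain reduces $\mathbf{b}+\mathbf{k}\leq\mathbf{a}+\mathbf{k}$ to $(1)+(1^r,k)\leq(0.5)+(1^r,k)$ in $C$, which fails for every $r$ and every $k$. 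So the statement you set out to prove in paragraphs three and four is not a theorem.

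The repair is to read the corollary as the paper intends: ``o-isomorphism'' here means a group isomorphism that preserves order in one direction only (the subsequent remark makes clear it is not an isomorphism of ordered structures). With that reading your proof is already complete at the end of your second paragraph, once the order-preservation step is written out explicitly as in the paper: from a witness $\mathbf{a}+\mathbf{d}+\mathbf{k}\leq\mathbf{b}+\mathbf{c}+\mathbf{k}$ one applies $\overline{\overline{(\cdot)}}$ to get $[\overline{\overline{\mathbf{a}}},\overline{\overline{\mathbf{b}}}]\preceq[\overline{\overline{\mathbf{c}}},\overline{\overline{\mathbf{d}}}]$, and in the case of equality of the $G_A$-components one adds the two witnesses to compare the $\mathfrak{S}(L)$-components. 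Paragraphs three and four should simply be deleted.
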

\begin{proof}
Consider $\Theta: G_L\to G_A\times \mathfrak{S}(L)$ defined by: $$\Theta([\mathbf{a},\mathbf{b}]=([\overline{\overline{\mathbf{a}}},\overline{\overline{\mathbf{b}}}],[\mathbf{a}+\overline{\overline{\mathbf{b}}},\mathbf{b}+\overline{\overline{\mathbf{a}}}])$$
It follows from the equations in the proof of Theorem \ref{summand} that $\Theta$ is a well-defined group isomorphism. In addition, suppose $[\mathbf{a}, \mathbf{b}]\preceq [\mathbf{c}, \mathbf{d}]$, then there exists $\mathbf{k}\in M_L$ such that $\mathbf{a}+\mathbf{d}+\mathbf{k}\leq \mathbf{b}+\mathbf{c}+\mathbf{k}$. 
Hence, $\overline{\overline{\mathbf{a}}}+\overline{\overline{\mathbf{d}}}+\overline{\overline{\mathbf{k}}}\leq \overline{\overline{\mathbf{b}}}+\overline{\overline{\mathbf{c}}}+\overline{\overline{\mathbf{k}}}$, and so $[\overline{\overline{\mathbf{a}}},\overline{\overline{\mathbf{b}}}]\preceq [\overline{\overline{\mathbf{c}}},\overline{\overline{\mathbf{d}}}]$. In addition, if $[\overline{\overline{\mathbf{a}}},\overline{\overline{\mathbf{b}}}]=[\overline{\overline{\mathbf{c}}},\overline{\overline{\mathbf{d}}}]$, then there exists $\mathbf{u}\in M_L$ such that $\overline{\overline{\mathbf{a}}}+\overline{\overline{\mathbf{d}}}+\mathbf{u}=\overline{\overline{\mathbf{b}}}+\overline{\overline{\mathbf{c}}}+\mathbf{u}$. Hence, $\mathbf{a}+\overline{\overline{\mathbf{b}}}+\mathbf{d}+\overline{\overline{\mathbf{c}}}+\mathbf{k}+\mathbf{u}\leq \mathbf{b}+\overline{\overline{\mathbf{a}}}+\mathbf{c}+\overline{\overline{\mathbf{d}}}+\mathbf{k}+\mathbf{u}$, and $[\mathbf{a}+\overline{\overline{\mathbf{b}}},\mathbf{b}+\overline{\overline{\mathbf{a}}}]\preceq [\mathbf{c}+\overline{\overline{\mathbf{d}}},\mathbf{d}+\overline{\overline{\mathbf{c}}}]$. Therefore, $\Theta([\mathbf{a},\mathbf{b}]\leq_l\Theta([\mathbf{c},\mathbf{d}]$, where $\leq_{lex}$ is the lexicographic order on $G_A\times \mathfrak{S}(L)$. 
\end{proof}
\begin{rem}
The isomorphism $\Theta$ of Corollary \ref{isom} is not in general a lattice-isomorphism. A general justification of this follows from the fact that the lexicographic product of two $\ell$-groups is not an $\ell$-group, unless the first component is an $o$-group \cite[Prop. 7]{KM}.
\end{rem}
For BL-algebras of cancellative type, several aspects of the construction can be simplified as we will see next.
\begin{prop}\label{cancel}
If $L$ is a BL-algebra of cancellative type, then $(M_L,+)$ is a cancellative monoid, that is for every good sequences $\mathbf{a}, \mathbf{b}, \mathbf{c}$ in $L$ such that $\mathbf{a}+\mathbf{c}=\mathbf{b}+\mathbf{c}$, then $\mathbf{a}=\mathbf{b}$.
\end{prop}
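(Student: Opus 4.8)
The plan is to reduce the statement to the case of a single BL-chain of cancellative type, where Proposition \ref{goodschain} makes every good sequence explicit and the verification becomes a short case analysis. Since $L$ is of cancellative type, it embeds as a subdirect product $L\subseteq \prod_i C_i$ of BL-chains $C_i$ of cancellative type, with each projection $\pi_i$ a surjective BL-homomorphism. Because $\pi_i$ preserves $+$ and $\otimes$, it carries good sequences to good sequences and satisfies $(\mathbf{a}+\mathbf{c})_i=\mathbf{a}_i+\mathbf{c}_i$ (this is exactly the reasoning behind Lemma \ref{goodc}). Hence $\mathbf{a}+\mathbf{c}=\mathbf{b}+\mathbf{c}$ yields $\mathbf{a}_i+\mathbf{c}_i=\mathbf{b}_i+\mathbf{c}_i$ in $M_{C_i}$ for every $i$; if each $M_{C_i}$ is cancellative, then $\mathbf{a}_i=\mathbf{b}_i$ for all $i$, and the joint injectivity of the projections forces $\mathbf{a}=\mathbf{b}$. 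So it suffices to prove the claim when $L$ is a BL-chain $C$ of cancellative type.

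For a chain, I would write $\mathbf{a}=(1^p,a)$, $\mathbf{b}=(1^q,b)$, $\mathbf{c}=(1^r,c)$ in canonical form, meaning $a,b,c<1$ (any trailing $1$ is absorbed into the run of leading $1$'s); in this normal form two good sequences coincide iff they have the same number of leading $1$'s and equal tails. By Remark \ref{sumc},
\[
\mathbf{a}+\mathbf{c}=(1^{p+r},a+c,a\otimes c),\qquad \mathbf{b}+\mathbf{c}=(1^{q+r},b+c,b\otimes c).
\]
I would then normalize each side according to whether its middle entry equals $1$: if $a+c<1$ then $a\otimes c=0$ by Lemma \ref{goodchain}(i), while if $a+c=1$ the run of $1$'s lengthens, and in all cases $a\otimes c\le a<1$. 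When $a+c$ and $b+c$ are simultaneously equal to $1$, or simultaneously $<1$, comparing the two normal forms forces $p=q$ together with $a\otimes c=b\otimes c$ (first case) or $a+c=b+c$ with $a\otimes c=0=b\otimes c$ (second case). In either situation both equalities $c+a=c+b$ and $c\otimes a=c\otimes b$ hold, so the definition of cancellative type (applied with the common summand $c$) gives $a=b$, whence $\mathbf{a}=\mathbf{b}$.

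The main obstacle is the \emph{mismatched} case, say $a+c=1$ while $b+c<1$ (the opposite case being symmetric): matching normal forms then forces $q=p+1$ and $a\otimes c=b+c$, so a priori $\mathbf{a}\ne\mathbf{b}$ and this case must be excluded rather than resolved. I would rule it out as follows: from $c\le b+c=a\otimes c\le c$ I obtain $a\otimes c=c$ and $b+c=c$, and since $c<1$, Lemma \ref{goodchain}(iv) gives $b=0$. But then $c\otimes a=c=c\otimes 1$ and $c+a=1=c+1$, so cancellative type would force $a=1$, contradicting $a<1$. Thus the mismatched cases cannot occur, which shows $M_C$ is cancellative and, via the reduction of the first paragraph, completes the proof.
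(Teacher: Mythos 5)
Your proof is correct and follows essentially the same route as the paper's: reduce to BL-chains of cancellative type via the subdirect product, write the good sequences as $(1^p,a)$, and run a case analysis in which the matched cases yield both hypotheses of cancellative type while the mismatched case is excluded using Lemma \ref{goodchain}(i),(iv). Your version is in fact slightly more careful than the paper's, since you make the canonical form $a,b,c<1$ and the normalization of the sums explicit, which is what the paper's comparison of $p$ and $q$ tacitly relies on.
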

\begin{proof}
Since $L$ is a subdirect product of BL-chains of cancellative type, it is enough to prove the result for BL-chains of cancellative type. Let $\mathbf{a}, \mathbf{b}, \mathbf{c}$ be good sequences of a BL-chain $C$ such that $\mathbf{a}+\mathbf{c}=\mathbf{b}+\mathbf{c}$. Then by Lemma \ref{goodschain}, there exist integers $p,q,r\geq 0$ and $a,b,c\in C$ such that $\mathbf{a}=(1^p,a)$, $\mathbf{b}=(1^q,b)$, $\mathbf{c}=(1^r,c)$. So, $(1^{p+r},a+c, a\otimes c)=(1^{q+r}, b+c, b\otimes c)$. If $p=q$, then since $C$ is of cancellative type, then $a=b$ and $\mathbf{a}=\mathbf{b}$. If $p\leq q-2$, then from $(1^{p+r},a+c, a\otimes c)=(1^{q+r}, b+c, b\otimes c)$, we obtain $a\otimes c=1$. Hence, $a=c=1$, which is a contradiction. If $p=q-1$, then $c\leq b+c=a\otimes c$and $a+c=1$. Hence, $a\otimes c=c\otimes 1$ and $a+c=c+1$ and since $C$ is of cancellative type, we obtain $a=1$. Hence $b+c=c$, which implies by Lemma \ref{goodchain}(iv) that $b=0$ or $c=1$. In either case, there is a contradiction. By symmetry the case $p>q$ leads to similar contradictions.
\end{proof}
Note that for BL-algebras of cancellative type, the equivalence relation $\sim$ on $M_L\times M_L$ can be simplified as follows. $(\mathbf{a}, \mathbf{b})\sim (\mathbf{c}, \mathbf{d})$ if and only if $\mathbf{a}+\mathbf{d}=\mathbf{b}+\mathbf{c}$. That is, $[\mathbf{a}, \mathbf{b}]=[\mathbf{c}, \mathbf{d}]$ if and only if $\mathbf{a}+\mathbf{d}=\mathbf{b}+\mathbf{c}$. In addition, for such BL-algebras, the order relation $\preceq$ is simplified as $[\mathbf{a}, \mathbf{b}]\preceq [\mathbf{c}, \mathbf{d}]$ if and only if $\mathbf{a}+\mathbf{d}\leq \mathbf{b}+\mathbf{c}$. \\
\begin{prop}
For every BL-algebra $L$ of cancellative type, $(M_L,+, \leq)$ is submonoid of the positive cone of $G_L$ and the restriction of the order of $G_L$ to $M_L$ coincides with $\leq$.
\end{prop}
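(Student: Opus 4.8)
The plan is to realize $M_L$ inside $G_L$ through the canonical map $\iota\colon M_L\to G_L$ given by $\iota(\mathbf{a})=[\mathbf{a},(0)]$, and to verify that $\iota$ is an injective monoid homomorphism whose image lies in $G_L^{+}$ and that is moreover an order-embedding. Every step will lean on the simplified descriptions of $\sim$ and $\preceq$ that are available precisely because $L$ is of cancellative type, and which themselves rest on Proposition \ref{cancel}. First I would observe that $\iota$ is a homomorphism of monoids: directly from the definition of addition on $G_L$ one has $\iota(\mathbf{a}+\mathbf{b})=[\mathbf{a}+\mathbf{b},(0)]=[\mathbf{a},(0)]+[\mathbf{b},(0)]=\iota(\mathbf{a})+\iota(\mathbf{b})$, while $\iota((0))=[(0),(0)]$ is the neutral element of $G_L$.

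Next comes injectivity. Using the simplified equivalence valid for cancellative type, $[\mathbf{a},(0)]=[\mathbf{b},(0)]$ holds if and only if $\mathbf{a}+(0)=(0)+\mathbf{b}$, that is $\mathbf{a}=\mathbf{b}$. Hence $\iota$ is injective and $\iota(M_L)$ is a submonoid of $G_L$ isomorphic to $(M_L,+)$. To see that this submonoid is contained in the positive cone, I would note that every coordinate of a good sequence is an element of $L$ and therefore $\geq 0$, so $(0)\leq \mathbf{a}$ in $M_L$ for every good sequence $\mathbf{a}$; the simplified order relation then gives $[(0),(0)]\preceq [\mathbf{a},(0)]$, i.e.\ $\iota(\mathbf{a})\in G_L^{+}$.

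Finally, for the coincidence of the two orders, take $\mathbf{a},\mathbf{b}\in M_L$. By the simplified order relation, $\iota(\mathbf{a})=[\mathbf{a},(0)]\preceq [\mathbf{b},(0)]=\iota(\mathbf{b})$ if and only if $\mathbf{a}+(0)\leq (0)+\mathbf{b}$, that is $\mathbf{a}\leq \mathbf{b}$. Thus $\iota$ is an order-embedding, and identifying $M_L$ with its image $\iota(M_L)$ exhibits $(M_L,+,\leq)$ as a submonoid of the positive cone of $G_L$ on which the order induced from $G_L$ agrees with $\leq$.

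I do not expect a genuine obstacle here, since all the real content has already been absorbed into Proposition \ref{cancel} and the resulting simplifications of $\sim$ and $\preceq$. The only point requiring care is to invoke those simplified forms rather than the original definitions involving an auxiliary $\mathbf{k}\in M_L$: it is exactly cancellativity that permits discarding $\mathbf{k}$, and so the argument above applies only to BL-algebras of cancellative type.
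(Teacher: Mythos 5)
Your argument is correct and complete. The paper in fact states this proposition with no proof at all, relying implicitly on the simplified descriptions of $\sim$ and $\preceq$ for BL-algebras of cancellative type given immediately before it; your verification via the embedding $\mathbf{a}\mapsto[\mathbf{a},(0)]$ is precisely the routine check the paper intends, and you correctly identify injectivity of that map as the one step where cancellativity (i.e., Proposition \ref{cancel} and the resulting simplification of $\sim$) is genuinely needed.
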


\section{Examples}
This section is devoted to computations of the Chang's $\ell$-group of some of the most important BL-algebras. Besides MV-chains, linearly ordered product algebras and linearly ordered G\"odel algebras are the most important classes of BL-algebras. We compute their Chang's $\ell$-group next.\\
The following facts simplify the computation of the Chang's $\ell$-group for BL-chains.
\begin{rem} \label{grp-chain}
Suppose that $L$ is a BL-chain and let $[\mathbf{a}, \mathbf{b}]\in G_L$. Then by Proposition \ref{goodschain} there exists $p,q\geq 0$, $a,b\in L$ such that $\mathbf{a}=(1^p,a)$ and $\mathbf{b}=(1^q,b)$. If $\mathbf{a}=(0)$, then $[\mathbf{a}, \mathbf{b}]=[(0), \mathbf{b}]=[(1), (1^{q+1},b)]$. Similarly if $\mathbf{b}=(0)$, then $[\mathbf{a}, \mathbf{b}]=[(1^{p+1}, a), (1)]$. Therefore every element $[\mathbf{a}, \mathbf{b}]\in G_L$ has the form $[(1^p,a), (1^q,b)]$ with $a\ne 0$ and $b\ne 0$.\\
In addition, for a BL-chain $L$, the $\ell$-group $\mathfrak{S}(L)$ can be described as follows. $\mathfrak{S}(L)=\{[(a),(b)]: a,b\in L\; \text{and}\; \overline{\overline{a}}=\overline{\overline{b}}\}$.
\end{rem}
We can apply the above to compute the Chang's $\ell$-group of linearly ordered G\"odel algebras
\begin{prop}\label{godel}
The Chang's $\ell$-group of any linearly ordered G\"odel algebra is isomorphic to the $o$-group $\mathbb{Z}$.
\end{prop}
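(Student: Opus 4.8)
The plan is to reduce everything to the decomposition provided by Corollary \ref{isom}, which gives an order-preserving isomorphism $G_L\cong G_A\times_{lex}\mathfrak{S}(L)$ with $A=MV(L)$ the MV-center of $L$. Thus the two ingredients to pin down are the MV-center $A$ together with its Chang group $G_A$, and the lattice-ordered group $\mathfrak{S}(L)$. If I can show $A=\mathbf{2}$ with $G_{\mathbf{2}}\cong\mathbb{Z}$ and that $\mathfrak{S}(L)$ is trivial, the conclusion $G_L\cong\mathbb{Z}$ follows at once.

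First I would compute the MV-center of a G\"odel chain $L$. In a G\"odel chain $\otimes$ coincides with $\wedge$: from $x\otimes x=x$ and monotonicity one gets $x\otimes y=\min(x,y)$, and the residuum is $x\to y=1$ if $x\leq y$ and $x\to y=y$ otherwise. Hence $\bar{x}=x\to 0$ equals $1$ when $x=0$ and $0$ when $x\neq 0$, so $MV(L)=\{\bar{x}:x\in L\}=\{0,1\}=\mathbf{2}$. Since $\mathbf{2}$ is an MV-algebra, $G_{\mathbf{2}}$ is the classical Chang $\ell$-group of $\mathbf{2}$; as the good sequences of $\mathbf{2}$ are exactly the sequences $(1^p)$ with $p\geq 0$ by Proposition \ref{goodschain}, the monoid $M_{\mathbf{2}}$ is isomorphic to $\mathbb{N}$, and therefore $G_{\mathbf{2}}\cong\mathbb{Z}$ as an $o$-group.

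The main step is to show $\mathfrak{S}(L)=\{[(0),(0)]\}$. By Remark \ref{grp-chain}, for a BL-chain $\mathfrak{S}(L)=\{[(a),(b)]:a,b\in L,\ \overline{\overline{a}}=\overline{\overline{b}}\}$. From the computation of $\bar{x}$ above, $\overline{\overline{a}}$ equals $0$ if $a=0$ and $1$ if $a\neq 0$; thus $\overline{\overline{a}}=\overline{\overline{b}}$ forces $a,b$ to be both zero (giving the identity) or both nonzero. In the latter case I would produce an explicit witness for the Grothendieck relation: since $L$ is a chain, $c:=a\wedge b\neq 0$, and using $\otimes=\wedge$ together with the addition formula of Remark \ref{sumc} one computes
$$(a)+(c)=(a+c,\,a\wedge c)=(1,\,c)=(b+c,\,b\wedge c)=(b)+(c),$$
where $a+c=b+c=1$ because $a,b,c$ are all nonzero and hence $x\oslash y=\bar{x}\to y=0\to y=1$. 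Taking $\mathbf{k}=(c)$ this gives $(a)+(0)+\mathbf{k}=(b)+(0)+\mathbf{k}$, whence $[(a),(b)]=[(0),(0)]$. Therefore every element of $\mathfrak{S}(L)$ is the identity.

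Combining the two computations, Corollary \ref{isom} yields $G_L\cong G_A\times_{lex}\mathfrak{S}(L)\cong\mathbb{Z}\times_{lex}\{0\}\cong\mathbb{Z}$ as $o$-groups. The only delicate point is the collapse of $\mathfrak{S}(L)$: because a G\"odel chain is \emph{not} of cancellative type, one cannot argue by cancellation and must instead exhibit the witness $\mathbf{k}=(a\wedge b)$ explicitly. This is precisely where the idempotency $\otimes=\wedge$ enters, so I expect the verification that all nonzero-nonzero pairs reduce to the identity to be the crux of the argument.
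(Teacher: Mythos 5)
Your proposal is correct and follows essentially the same route as the paper: compute that the MV-center is $\mathbf{2}$ (so $G_A\cong\mathbb{Z}$), collapse $\mathfrak{S}(L)$ by exhibiting the witness $\mathbf{k}=(\min(a,b))$ using $\otimes=\wedge$, and conclude from the decomposition $G_L\cong G_A\oplus\mathfrak{S}(L)$. Your use of Corollary \ref{isom} in place of the paper's direct appeal to Theorem \ref{summand} is an immaterial difference.
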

\begin{proof}
First, note that if $L$ is a linearly ordered G\"odel algebra, then for $y<x$, $x\to y=x$ \cite[Lemma 4.2.14]{H2}. It follows that $y<x$ implies $y=x\wedge y=x\otimes (x\to y)=x\otimes y$. Thus, $x\otimes y=Min(x,y)$. Also in $L$, $x<y$ implies $x\to y=y$ \cite[Lemma 4.2.14]{H2}, in particular the MV-center of $L$ is the two-element Boolean algebra $\mathbf{2}:=\{0,1\}$. It is known and easy to see that $G_{\mathbf{2}}\cong \mathbb{Z}$. One should also note that for every $a,b\in L\setminus \{0\}$, $a+b=1$.
In addition, from the description of $\mathfrak{S}(L)$ given in Remark \ref{grp-chain}, we have $\mathfrak{S}(L)=\{[(a),(b)]: a,b>0\}$. But, for every $a,b>0$, $(a)+(\alpha)=(b)+(\alpha)$, with $\alpha=Min(a,b)$. Therefore, $[(a), (b)]=[(0),(0)]$, $\mathfrak{S}(L)=\{0\}$, and by Theorem \ref{summand} $G_L=G_A\cong \mathbb{Z}$.
\end{proof}
\begin{lem}\label{chain-group}
1. If $L$ is linearly ordered BL-algebra, then $G_L$ is an $o$-group.\\
2. If $L$ is a BL-algebra of cancellative type so that $G_L$ is an $o$-group, then $L$ is a BL-chain.
\end{lem}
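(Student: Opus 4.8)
The plan is to prove the two parts separately, since they have rather different flavors. For part~1, I would invoke Theorem \ref{summand} together with Corollary \ref{isom}: when $L$ is linearly ordered, its MV-center $A$ is an MV-chain, so by the Mundici correspondence $G_A$ is an $o$-group. By Corollary \ref{isom} there is an order-preserving group isomorphism $G_L\cong G_A\times_{lex}\mathfrak{S}(L)$. Since the first factor $G_A$ of a lexicographic product is an $o$-group, the lexicographic order is total, and hence $G_L$ is an $o$-group. The one point needing care here is that $\mathfrak{S}(L)$ itself must also be totally ordered for the lexicographic order to be total (the lex order is total iff both factors are). So I would separately argue that when $L$ is a chain, $\mathfrak{S}(L)=\{[(a),(b)]:a,b\in L,\ \overline{\overline{a}}=\overline{\overline{b}}\}$ (using Remark \ref{grp-chain}) is totally ordered; given two such elements $[(a),(b)]$ and $[(c),(d)]$, comparing them reduces via the definition of $\preceq$ and Lemma \ref{goodchain} to a comparison of elements of the chain $L$, which is automatically decidable. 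Alternatively, and perhaps more cleanly, I would argue directly: for any $[\mathbf{a},\mathbf{b}]\in G_L$ with $L$ a chain, by Proposition \ref{lat-group} it suffices to show every element is $\preceq$-comparable to $[(0),(0)]$, which comes down to comparing $\mathbf{a}$ and $\mathbf{b}$ componentwise, and componentwise comparability is immediate since $L$ is totally ordered.

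For part~2, I would argue contrapositively: assuming $L$ is of cancellative type and is \emph{not} a BL-chain, I would produce two elements of $G_L$ that are $\preceq$-incomparable, showing $G_L$ fails to be an $o$-group. Since $L$ is a subdirect product of BL-chains of cancellative type and is not itself a chain, there must be two distinct coordinates, i.e.\ two projections $\pi_1,\pi_2$ onto chains $C_1,C_2$, and elements witnessing that the order on $L$ is not total. Concretely, there exist $x,y\in L$ with $x\not\leq y$ and $y\not\leq x$; I would then consider the good sequences $(x)$ and $(y)$ and the group elements $[(x),(0)]$ and $[(y),(0)]$. Because $L$ is of cancellative type, the order $\preceq$ on $G_L$ simplifies (as noted after Proposition \ref{cancel}) to $[\mathbf{a},\mathbf{b}]\preceq[\mathbf{c},\mathbf{d}]$ iff $\mathbf{a}+\mathbf{d}\leq\mathbf{b}+\mathbf{c}$, with no auxiliary $\mathbf{k}$ needed. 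Thus $[(x),(0)]\preceq[(y),(0)]$ would mean $(x)\leq(y)$, i.e.\ $x\leq y$, and symmetrically; incomparability of $x,y$ then forces incomparability of the two group elements.

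The main obstacle I anticipate is in part~2, namely manufacturing a genuinely incomparable pair in $G_L$ from the mere failure of $L$ to be a chain, and being sure the cancellative-type hypothesis is actually used (without it the simplified description of $\preceq$ is unavailable, and one might worry that adding a clever $\mathbf{k}$ could rescue comparability). The cancellative simplification of $\preceq$ is exactly what rules this out, so the key step is to first record that simplification and then observe that the map $x\mapsto[(x),(0)]$ is an order-embedding of $L$ (as a poset) into $G_L$. Once that embedding is in hand, non-totality of $L$ transfers immediately to non-totality of $\preceq$, completing the contrapositive. For part~1, the only subtlety is the lexicographic-order totality argument, which I would handle by the direct componentwise comparison described above rather than leaning on the lex-product structure, thereby sidestepping any need to separately verify that $\mathfrak{S}(L)$ is totally ordered.
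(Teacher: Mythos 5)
Your plan follows essentially the same route as the paper: part~1 reduces to the fact that $M_L$ is totally ordered when $L$ is a chain, and part~2 uses the simplified, $\mathbf{k}$-free description of $\preceq$ available for cancellative type to transfer incomparability of $x,y\in L$ to incomparability of $[(x),(0)]$ and $[(y),(0)]$ (the paper states this direction directly rather than contrapositively, but the content is identical). Your observation that $x\mapsto[(x),(0)]$ is an order-embedding for cancellative type is exactly the point the paper exploits, and your "alternative direct" argument for part~1 is the one the paper actually gives; Corollary \ref{isom} is not needed.

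One step in part~1 is stated too loosely: you say that comparability of $\mathbf{a}$ and $\mathbf{b}$ "comes down to comparing componentwise, and componentwise comparability is immediate since $L$ is totally ordered." That each coordinate pair $a_i,b_i$ is comparable does not by itself make the sequences comparable in the product order (one could have $a_1<b_1$ but $a_2>b_2$). What saves the argument is Proposition \ref{goodschain}: good sequences over a BL-chain have the canonical form $(1^p,a)$, and any two sequences of this form \emph{are} comparable, so $M_L$ is a chain. This is precisely the ingredient the paper invokes, and your proof needs it too; with that citation inserted, the argument is complete.
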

\begin{proof}
Suppose that $L$ is a BL-chain, then elements of $M_L$ has the form $(1^p,a)$ with $p\geq 0$ integer and $a\in L$. It follows that $M_L$ is a chain, and therefore by construction of $G_L$ and $\preceq$, we conclude that $G_L$ is linearly ordered. Conversely, if $G_L$ is linearly ordered, and $a,b\in L$, then $[(a),(0)]\preceq [(b),(0)]$ or $[(b),(0)]\preceq [(a),(0)]$. Hence, $(a)\leq (b)$ or $(b)\leq (a)$, that is $a\leq b$ or $b\leq a$. Therefore, $L$ is linearly ordered as claimed.
\end{proof}
Now, we can compute the Chang's $\ell$-group of the linearly ordered product algebras, starting with $[0,1]$.
\begin{prop}\label{group-p}
Let $L=[0,1]$ with the product structure. Then $G_L$ is isomorphic to $\mathbb{Z}\times_{lex} \mathbb{R}_{+}$.
\end{prop}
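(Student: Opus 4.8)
The plan is to reduce everything to Corollary \ref{isom}, which already exhibits $G_L$ as the lexicographic product $G_A\times_{lex}\mathfrak{S}(L)$, where $A=MV(L)$. It therefore suffices to identify the two factors separately for the product structure on $[0,1]$, and then invoke Lemma \ref{chain-group}(1) to confirm that the resulting lexicographic product is genuinely an $o$-group.

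First I would record the arithmetic of the product algebra: $x\otimes y=xy$, while $x\to y=1$ for $x\le y$ and $x\to y=y/x$ otherwise. Consequently $\bar x=0$ for every $x>0$ and $\bar 0=1$, so that $\overline{\overline{x}}=1$ for $x>0$ and $\overline{\overline{0}}=0$. This immediately gives $MV(L)=\{0,1\}=\mathbf{2}$, whence $G_A=G_{\mathbf 2}\cong\mathbb Z$, exactly as recalled in the proof of Proposition \ref{godel}. The same computation shows $a+b=1$ whenever $a,b>0$: in that case $\bar a=\bar b=0$, so both slots $\bar a\to b$ and $\bar b\to a$ of the definition $(\ast)$ equal $1$. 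This fact is used throughout the rest of the argument.

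Next I would compute $\mathfrak{S}(L)$. Since $L$ is a product BL-algebra it is of cancellative type, so by Proposition \ref{cancel} the monoid $M_L$ is cancellative, and the relations $\sim$ and $\preceq$ simplify to $[\mathbf a,\mathbf b]=[\mathbf c,\mathbf d]$ iff $\mathbf a+\mathbf d=\mathbf b+\mathbf c$, with the analogous simplification for $\preceq$. Using the description $\mathfrak{S}(L)=\{[(a),(b)]:a,b\in L,\ \overline{\overline a}=\overline{\overline b}\}$ from Remark \ref{grp-chain}, the condition $\overline{\overline a}=\overline{\overline b}$ forces either $a,b>0$ or $a=b=0$, so $\mathfrak{S}(L)=\{[(a),(b)]:a,b>0\}\cup\{[(0),(0)]\}$. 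I would then define $\varphi\colon\mathfrak{S}(L)\to\mathbb R_{+}$ by $\varphi([(a),(b)])=a/b$ and $\varphi([(0),(0)])=1$, where $\mathbb R_{+}$ denotes the ordered multiplicative group of positive reals. Well-definedness and injectivity follow from the computation $(a)+(d)=(1,ad)$, which gives $[(a),(b)]=[(c),(d)]$ iff $ad=bc$ iff $a/b=c/d$; surjectivity is clear since every positive real is a quotient of two elements of $(0,1]$. For the homomorphism property I would use Remark \ref{sumc} to get $(a)+(c)=(1,ac)$ and $(b)+(d)=(1,bd)$, so that $[(a),(b)]+[(c),(d)]=[(1,ac),(1,bd)]$, and then normalize this pair of length-two sequences by cancellativity to read off that its attached ratio is $(ac)/(bd)=(a/b)(c/d)$; thus addition in $\mathfrak{S}(L)$ corresponds to multiplication in $\mathbb R_{+}$. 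Finally $[(a),(b)]\preceq[(c),(d)]$ iff $(1,ad)\le(1,bc)$ iff $ad\le bc$ iff $a/b\le c/d$, so $\varphi$ is an $o$-isomorphism.

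Combining the two computations with Corollary \ref{isom} yields $G_L\cong\mathbb Z\times_{lex}\mathbb R_{+}$, and since $\mathbb Z$ is an $o$-group this lexicographic product is again an $o$-group, consistent with Lemma \ref{chain-group}(1). I expect the main obstacle to be the homomorphism verification: the sum of two single-term classes produces good sequences of length two, such as $(1,ac)$, which must be carefully reduced using Proposition \ref{cancel} together with Lemma \ref{goodchain}(v) and the relation $a+b=1$ for positive $a,b$, before the ratio $ac/bd$ can be read off. Keeping track of the degenerate case where an entry is $0$, so that $\overline{\overline{\cdot}}$ collapses and the class lands in the identity, is the only other point requiring care.
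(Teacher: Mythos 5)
Your proposal is correct and follows essentially the same route as the paper: reduce via Corollary \ref{isom} and Lemma \ref{chain-group}(1) to identifying $G_{\mathbf 2}\cong\mathbb Z$ and $\mathfrak{S}(L)\cong\mathbb R_{+}$, the latter through the ratio map $[(a),(b)]\mapsto a/b$. The only difference is that you spell out the homomorphism and reduction-of-length-two-sequences details that the paper dismisses as routine, and these check out.
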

\begin{proof}
First, note that if $L$ is a BL-chain such that $G_L$ is an $o$-group, then the isomorphism $\Theta$ of Corollary \ref{isom} is a lattice isomorphism, and $G_L\cong G_A\times_{lex} \mathfrak{S}(L)$ (as $o$-groups).
Since $G_L$ is linearly ordered by Lemma \ref{chain-group}, and $MV(L)=\mathbf{2}$, it is enough to show that $\mathfrak{S}(L)\cong \mathbb{R}_{+}$. One should note that for every $a,b\in L\setminus \{0\}$, $a+b=1$. 
In addition, as above, $\mathfrak{S}(L)=\{[(a), (b)]: a,b>0\}$. Consider $\phi: \mathfrak{S}(L)\to \mathbb{R}_{+}$ defined by $\phi ([(a), (b)])=\dfrac{a}{b}$. 
 It is clear that $[(a), (b)]=[(x), (y)]$ if and only if $\dfrac{a}{b}=\dfrac{x}{y}$. So, $\phi$ is a well-defined injective map. For the surjection of $\phi$, let $x\in \mathbb{R}_{+}$. Then, there exists an integer $n>1$ such that $0\leq \dfrac{x}{n}<1$, and it follows that $\phi ([(\dfrac{x}{n}),(\dfrac{1}{n})])$. Finally, it is a routine verification to check that $\phi$ is an order-preserving group homomorphism.
\end{proof}
For product BL-chains, the group $\mathfrak{S}(L)$ coincides with the well-known $o$-group of a product BL-chain as treated in \cite{C3, H2} .
  \begin{thm}\cite[Thm. 4.1.8]{H2}\label{Ha}
 For every product BL-chain $(L, \wedge, \vee, \otimes, \rightarrow, 0, 1)$, there exists a unique (up to isomorphism) linearly ordered Abelian group $\mathfrak{H}(L)=(G, +_G, 0_G, \leq_G)$ such that for every $g,h\in L\setminus \{0\}$:\\
 (i) $0_G=1$;\\
 (ii) $L\setminus \{0\}$ is equal to the negative cone of $\mathfrak{H}(L)$;\\
 (iii) $g+_Gh=g\otimes h$;\\
 (iv) $g\leq_Gh$ if and only if $g\leq h$.
 \end{thm}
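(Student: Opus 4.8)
The plan is to realise $\mathfrak{H}(L)$ as the group of fractions of the multiplicative monoid of nonzero elements of $L$. First I would record that $(L\setminus\{0\},\otimes,1)$ is a cancellative commutative monoid. Commutativity, associativity and the identity $1$ are inherited from $L$; closure holds because in a product chain $\bar y=0$ for every $y\neq 0$ (from $y\wedge\bar y=0$ one gets $\min(y,\bar y)=0$), so by $x\otimes y=0\iff x\leq\bar y$ one has $x\otimes y\neq 0$ whenever $x,y\neq 0$. Cancellativity is the content of $L$ being of cancellative type (as recorded in the Example above, via \cite{afm}); one can also read it directly off the product axiom $\overline{\overline x}\to((x\otimes z\to y\otimes z)\to(x\to y))=1$, since for $x\neq 0$ this reduces to $x\otimes z\to y\otimes z\leq x\to y$, and substituting a hypothesis $a\otimes c=b\otimes c$ with $a\leq b$ forces $b\to a=1$, hence $a=b$.

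Next I would form the Grothendieck group of fractions $G$ of this monoid, whose elements are classes $[a,b]$ (thought of as $a\otimes b^{-1}$) with $[a,b]=[c,d]$ iff $a\otimes d=b\otimes c$, and set $\iota(a)=[a,1]$. Writing the operation additively with $0_G=\iota(1)$, the map $\iota$ is an injective monoid homomorphism satisfying $\iota(g\otimes h)=\iota(g)+_G\iota(h)$, which yields (i) and (iii) once $L\setminus\{0\}$ is identified with its image. To put a compatible total order on $G$ I would declare $\iota(a)-_G\iota(b)\geq_G 0_G$ iff $b\leq a$ in $L$; using cancellativity and order-compatibility of $\otimes$ this is independent of the chosen representative, and since $L$ is a chain every element of $G$ is comparable to $0_G$, so $\leq_G$ is a translation-invariant linear order.

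It then remains to show that $L\setminus\{0\}$ is precisely the negative cone, which gives (ii) and (iv). Since $a\leq 1$ for all $a\neq 0$, we have $\iota(a)\leq_G 0_G$, so $\iota(L\setminus\{0\})$ lies in the negative cone; conversely if $[a,b]\leq_G 0_G$ then $a\leq b$, and the hoop identity $b\otimes(b\to a)=b\wedge a=a$ together with $b\to a\neq 0$ (as $a\neq 0$) gives $[a,b]=\iota(b\to a)$, so the negative cone is contained in $\iota(L\setminus\{0\})$. Property (iv) is then immediate from the definition of $\leq_G$. For uniqueness, given any linearly ordered abelian group $G'$ satisfying (i)--(iv), the inclusion of $L\setminus\{0\}$ as its negative cone is a monoid homomorphism into $G'$ and hence factors through the universal group of fractions; because in a totally ordered abelian group every element is a difference of two negative-cone elements, the induced homomorphism $G\to G'$ is an order isomorphism fixing $L\setminus\{0\}$.

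I expect the main obstacle to be the identification of $L\setminus\{0\}$ with \emph{exactly} the negative cone rather than merely a subset of it: the inclusion into the negative cone is easy, but the reverse inclusion is what forces the use of the residuation identity $b\otimes(b\to a)=a$ valid for $a\leq b$, and it is this step---rather than the formal group-of-fractions machinery---that genuinely uses the residuated hoop structure of a product chain. Some care is also needed to confirm well-definedness, totality and translation invariance of $\leq_G$, though these reduce to cancellativity and to $L$ being a chain.
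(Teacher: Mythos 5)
Your proposal is correct and follows essentially the same route as the paper, which (citing H\'ajek) outlines exactly this construction: the Grothendieck group of fractions of the cancellative commutative monoid $(L\setminus\{0\},\otimes,1)$ with classes $[a,b]$, order $[a,b]\leq_G[c,d]$ iff $a\otimes d\leq b\otimes c$, and the embedding $a\mapsto[a,1]$. You in fact supply the details the paper dismisses as ``obvious'' --- notably the reverse inclusion of the negative cone via $b\otimes(b\to a)=a$ for $a\leq b$, and the uniqueness via the universal property --- so no gap remains.
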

 For the convenience of the reader, we outline the construction of $\mathfrak{H}(L)$. Since $L$ is a product BL-chain, then $(L\setminus\{0\}, \otimes, 1)$ is a cancellative monoid \cite[Lemma 5.1]{C3}. We start by the relation $\equiv$ on $L\setminus\{0\}\times L\setminus\{0\}$ defined by $(x,y)\equiv (a,b)$ if and only if $x\otimes b=a\otimes y$, which is clearly an equivalence relation. Then let $G=L\setminus\{0\}\times L\setminus\{0\}/\equiv$. If one denotes the class of $(a,b)$ by $[a,b]$ and equips $G$ with the operation:$$[a,b]+_G[c,d]=[a\otimes c, b\otimes d], \\
 -[a,b]=[b,a],\\
 0_G=[1,1]$$
 Then $\mathfrak{H}(L)=(G, +_G, 0_G, \leq_G)$ is a totally ordered Abelian group, where $[a,b]\leq_G[c,d]$ if and only if $a\otimes d\leq b\otimes c$. Note that $(L\setminus\{0\}, \otimes, 1)$ is isomorphic to the submonoid of $\mathfrak{S}(L)$ made of elements of the form $[a,1]$. Clearly under this identification the conditions (i)-(iv) of Theorem \ref{Ha} are obvious.\\
 In addition, the groups $\mathfrak{H}(L)$ and $\mathfrak{S}(L)$ are naturally isomorphic. In fact $[a,b]\mapsto [(a),(b)]$ is an order-preserving isomorphism from $\mathfrak{H}(L)$ onto $\mathfrak{S}(L)$.\\
 It is easy to see that if $L=[0,1]$ is equipped with the product structure, then $\mathfrak{H}(L)\cong (\mathbb{R}_{+}, \cdot, 1)$. Therefore, the next result is a generalization of Proposition \ref{group-p}.
 \begin{prop}
 If $L$ is a product BL-chain, then the Chang's group of $L$ is naturally isomorphic to $\mathbb{Z}\times_{lex} \mathfrak{S}(L)$.
 \end{prop}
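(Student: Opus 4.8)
The plan is to assemble the machinery already in place rather than to compute directly inside $G_L$. First I would note that a product BL-chain is in particular a BL-chain, hence linearly ordered, so Lemma \ref{chain-group}(1) yields that $G_L$ is an $o$-group. As recorded at the beginning of the proof of Proposition \ref{group-p}, whenever $L$ is a BL-chain for which $G_L$ is an $o$-group, the map $\Theta$ of Corollary \ref{isom} is not merely an order-preserving group isomorphism but an actual lattice isomorphism, so that $G_L\cong G_A\times_{lex}\mathfrak{S}(L)$ as $o$-groups, where $A=MV(L)$. The passage from ``$\Theta$ is an $o$-isomorphism'' to ``$\Theta$ is a lattice isomorphism'' is legitimate precisely because the first lexicographic factor will turn out to be totally ordered, so that the lexicographic product is a genuine $\ell$-group, in accordance with the remark following Corollary \ref{isom}. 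Thus the whole problem reduces to identifying the first factor $G_A$.

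The second step is to compute the MV-center of a product BL-chain. Since $L$ is a product BL-algebra it satisfies $x\wedge\overline{x}=0$ for all $x$; in a chain $\wedge$ is $\min$, so $\min(x,\overline{x})=0$, which forces $x=0$ or $\overline{x}=0$ for every $x$. Because $\overline{0}=1$ and $\overline{1}=0$, it follows that $MV(L)=\{\overline{x}:x\in L\}=\{0,1\}=\mathbf{2}$. Since $G_{\mathbf{2}}\cong\mathbb{Z}$, as recorded in the proof of Proposition \ref{godel}, the first factor is exactly $\mathbb{Z}$, and the displayed isomorphism becomes $G_L\cong\mathbb{Z}\times_{lex}\mathfrak{S}(L)$.

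Finally, for the word \emph{naturally}, I would invoke the explicit natural isomorphism $\mathfrak{H}(L)\cong\mathfrak{S}(L)$ given by $[a,b]\mapsto[(a),(b)]$ established just before the statement, together with the fact that $\Theta$ is itself defined by explicit formulas in the data of $L$; composing these yields a natural isomorphism with $\mathbb{Z}\times_{lex}\mathfrak{S}(L)$. I do not anticipate a genuine obstacle: every ingredient is already available, and the only point deserving a sentence of justification is the MV-center computation, after which the conclusion is immediate. The main thing to get right is therefore the identification $MV(L)=\mathbf{2}$ for product BL-chains.
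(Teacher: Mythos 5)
Your proof is correct, but it follows a genuinely different route from the paper's. The paper proves the proposition by a direct computation: using Remark \ref{grp-chain} it writes every element of $G_L$ as $[(1^p,a),(1^q,b)]$ with $a,b\neq 0$ and defines an explicit map $\varphi([(1^p,a),(1^q,b)])=(p-q,\,a-b)$ into $\mathbb{Z}\times_{lex}\mathfrak{S}(L)$ (with $a-b:=a+_G(-b)$ computed in $\mathfrak{H}(L)\cong\mathfrak{S}(L)$), then checks by hand that $\varphi$ is well defined, injective, surjective, and order-preserving. You instead assemble the structural results already proved: Lemma \ref{chain-group}(1) to get that $G_L$ is an $o$-group, the observation opening the proof of Proposition \ref{group-p} that $\Theta$ of Corollary \ref{isom} is then a lattice isomorphism onto $G_A\times_{lex}\mathfrak{S}(L)$, and the computation $MV(L)=\mathbf{2}$ (your argument via $x\wedge\overline{x}=0$ forcing $x=0$ or $\overline{x}=0$ in a chain is correct and is the one genuinely new ingredient you need; the paper only records this for $[0,1]$ with the product structure, but it holds for every product BL-chain exactly as you say). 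Your route is shorter and more conceptual; what it costs is explicitness --- the paper's $\varphi$ exhibits the isomorphism by a concrete formula, which is closer to the assertion that the isomorphism is \emph{natural}, whereas your argument delivers naturality only by unwinding $\Theta$ and the identification $\mathfrak{H}(L)\cong\mathfrak{S}(L)$, as you note at the end. Both arguments rest on the same unproved-in-detail claim that an $o$-isomorphism between $o$-groups is automatically a lattice isomorphism, so you incur no additional debt there.
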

 \begin{proof}
 Recall that every element $[\mathbf{a}, \mathbf{b}]\in G_L$ has the form $[(1^p,a), (1^q,b)]$ with $a\ne 0$ and $b\ne 0$. Now, consider the map $\varphi: \mathfrak{S}(L)\to \mathbb{Z}\times \mathfrak{S}(L)$ defined by: $\varphi ([(1^p,a), (1^q,b)])=(p-q, a-b)$ where $a-b:=a+_G(-b)$. We have, $[(1^p,a), (1^q,b)]=[(1^r,x), (1^s,y)]$ if and only if $(1^{p+s+1}, a\otimes y)=(1^{q+r+1}, b\otimes x)$ if and only if $p+s=q+r$ and $a\otimes y=b\otimes x$ if and only if $p-q=r-s$ and $a+_Gb=x+_Gy$ if and only if $p-q=r-s$ and $a-b=x-y$. Therefore, $\varphi$ is a well-defined one-to-one map that is clearly a homomorphism. For the surjectivity of $\varphi$, let $(m,x)\in \mathbb{Z}\times \mathfrak{S}(L)$. Then $(m,x)=\varphi([(1^m,x),(0)])$ if $m\geq 0$ and $0\leq_Gx$; $(m,x)=\varphi([(1^m),(-x)])$ if $m\geq 0$ and $x\leq_G0$; $(m,x)=\varphi([(x),(1^{-m})])$ if $m\leq 0$ and $0\leq_Gx$; $(m,x)=\varphi([(0),(1^{-m},-x)])$ if $m\leq 0$ and $x\leq_G0$. Finally, if it is clear that $[(1^p,a), (1^q,b)]\preceq [(1^r,x), (1^s,y)]$ if and only if $p-q< r-s$ or $p-q=r-s$ and $a-b\leq_Gx-y$. Therefore, $\varphi$ is an isomorphism of lattice ordered Abelian groups.
 \end{proof}
\section{A nearly adjoint pair of functors}
In this section, we consider the natural extension of the functor $\Xi$ (the inverse of the Mundici's functor) treated in \cite{CM} to BL-algebras.\\
Let $\mathcal{BL}$ denotes the category of BL-algebras, and $\mathcal{LU}$ denotes the category of lattice ordered groups with strong units. Recall that if $\langle G, u\rangle $ and $\langle G',u'\rangle$ are $\ell$-groups with strong units, a morphism $\phi: \langle G, u\rangle \to \langle G',u'\rangle$ is a group homomorphism $\phi:G\to G'$ such that $\phi(u)=u'$. \\
1. Now consider $\Xi:\mathcal{BL}\to \mathcal{LU}$ defined:
\begin{itemize}
\item On objects: For every BL-algebra $L$, $\Xi(L)$ is the lattice ordered group with strong unit $\langle G_L,u_L\rangle$ of Proposition \ref{strong}.
\item On morphisms:  If $X,Y$ are BL-algebras, and $f:X\to Y$ is a homomorphism of BL-algebras. Define $\Xi(f):\langle G_X, u_X\rangle \to \langle G_Y, u_Y\rangle$ by: $\Xi(f)([\mathbf{a},\mathbf{b}])=[f(\mathbf{a}),f(\mathbf{b})]$, where $f(\mathbf{a}):=(f(a_1),f(a_2),\ldots)$.
\end{itemize}
Then, one can verify that $\Xi(f)$ is a well-defined group homomorphism and $\Xi(f)(u_X)=u_Y$, that is $\Xi(f)$ is a morphism in $\mathcal{LU}$. It is a routine verification that $\Xi$ is a functor from $\mathcal{BL}$ to $\mathcal{LU}$. \\
2. On the other hand, we view the Mundici's functor $\Gamma$ as having for codomain $\mathcal{BL}$. More precisely, consider $\Gamma:\mathcal{LU}\to \mathcal{BL}$ defined:  
\begin{itemize}
\item On objects: For every $\ell$-group with strong unit $\langle G,u\rangle$, $\Gamma(\langle G,u\rangle)=([0,u], \vee, \wedge, \otimes, \to, 0, u)$; with $x\otimes y=u-(2u-x-y)\wedge u$ and $x\to y=(u-x+y)\wedge u$.
\item On morphisms: If $\langle G, u\rangle $ and $\langle G',u'\rangle$ are $\ell$-groups with strong units and $\phi: \langle G, u\rangle \to \langle G',u'\rangle$ a morphism. Define $\Gamma(\phi)$ to be the restriction of $\phi$ to $[0,u]$.
\end{itemize}
It can be proved that $([0,u], \vee, \wedge, \otimes, \to, 0, u)$ is a BL-algebra (in fact, an MV-algebra)\cite[Prop. 1.1] {CM}and $\Gamma$ is a functor from $\Gamma:\mathcal{LU}\to \mathcal{BL}$.\\
We will need the following key results.
\begin{lem}\cite[Lem. 1.5]{CM}
Let $\langle G,u\rangle$ be an $\ell$-group with strong unit, and $A=\Gamma(G,u)\subseteq G$. For every $a\in G^{+}$, there exists a unique good sequence $g(a):=(a_1,\ldots,a_n)$ in $A$ such that $a=a_1+\cdots+a_n$.
\end{lem}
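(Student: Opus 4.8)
The plan is to realize the good sequence explicitly by ``slicing'' $a$ into pieces of height $u$. I work inside the ambient $\ell$-group $G$, so that $+$ now denotes the group operation and the MV-sum on $A=[0,u]$ is recovered as $x\oplus y=(x+y)\wedge u$. Set $b_i:=a\wedge iu$ for $i\ge 0$ and define $a_i:=b_i-b_{i-1}$ for $i\ge 1$. Since $a\in G^{+}$ we have $b_0=a\wedge 0=0$, the sequence $(b_i)$ is nondecreasing, and because $u$ is a strong unit there is an $n$ with $a\le nu$, so $b_i=a$ and $a_i=0$ for all $i>n$. The telescoping identity then gives $a_1+\cdots+a_n=b_n-b_0=a$.

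For existence it remains to check that each $a_i$ lies in $A$ and that $(a_1,a_2,\ldots)$ is a good sequence. Nonnegativity $a_i\ge 0$ is immediate from $b_{i-1}\le b_i$. To see $a_i\le u$, I would use the translation-invariant distributive law $(x\wedge y)+z=(x+z)\wedge(y+z)$ valid in any $\ell$-group: it gives $b_{i-1}+u=(a+u)\wedge iu$, and since $a\le a+u$ we get $b_i=a\wedge iu\le(a+u)\wedge iu=b_{i-1}+u$, that is $a_i\le u$. For the good-sequence condition $a_i\oplus a_{i+1}=a_i$, i.e. $(a_i+a_{i+1})\wedge u=a_i$, I would compute $a_i+a_{i+1}=b_{i+1}-b_{i-1}$ and, after adding $b_{i-1}$ to both sides via the same distributive law, reduce the claim to $b_{i+1}\wedge(u+b_{i-1})=b_i$. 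Expanding $b_{i+1}=a\wedge(i+1)u$ and $u+b_{i-1}=(u+a)\wedge iu$ and absorbing the redundant meetands (using $iu\le(i+1)u$ and $a\le u+a$) collapses the left-hand side to $a\wedge iu=b_i$, as needed.

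For uniqueness, suppose $(a_1,\ldots,a_n)$ is any good sequence in $A$ with $a=a_1+\cdots+a_n$ in $G$. The strategy is to prove that its partial sums are forced: $s_i:=a_1+\cdots+a_i=a\wedge iu$ for every $i$, which immediately yields $a_i=b_i-b_{i-1}$ and hence uniqueness. The inequality $s_i\le a\wedge iu$ is easy, since $s_i\le iu$ (each $a_j\le u$) and $s_i\le a$ (the remaining $a_j\ge 0$). For the reverse inequality I would write $a\wedge iu=s_i+\bigl((a-s_i)\wedge(iu-s_i)\bigr)$ with $a-s_i=\sum_{j>i}a_j\ge 0$ and $iu-s_i=\sum_{k\le i}a_k^{*}\ge 0$, where $a_k^{*}=u-a_k$, and show that the correction term vanishes.

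The main obstacle, and the heart of the argument, is this last disjointness claim $\bigl(\sum_{j>i}a_j\bigr)\wedge\bigl(\sum_{k\le i}a_k^{*}\bigr)=0$. I would first translate the good-sequence relation $a_j\oplus a_{j+1}=a_j$ into the $\ell$-group statements $a_{j+1}\wedge a_j^{*}=0$ and $a_{j+1}\le a_j$ (so the $a_j$ are nonincreasing, whence $a_k^{*}\le a_i^{*}$ for $k\le i$). From $a_{i+1}\wedge a_i^{*}=0$ together with $a_j\le a_{i+1}$ for $j>i$ and $a_k^{*}\le a_i^{*}$ for $k\le i$, monotonicity of the meet against $0$ gives the pairwise disjointness $a_j\wedge a_k^{*}=0$ for all $j>i$ and $k\le i$. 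Finally I would invoke the standard $\ell$-group fact that the set of nonnegative elements disjoint from a fixed element is closed under addition; summing first over $k$ and then over $j$ promotes the pairwise disjointness to $\bigl(\sum_{j>i}a_j\bigr)\wedge\bigl(\sum_{k\le i}a_k^{*}\bigr)=0$. This closes the gap $a\wedge iu\le s_i$ and finishes uniqueness. The only point demanding care throughout is keeping the MV-operations on $A$ and the group operations on $G$ cleanly separated.
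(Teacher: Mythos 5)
Your proof is correct, and it is essentially the classical argument: the paper itself gives no proof of this lemma (it is quoted verbatim from [CM, Lem.~1.5]), and your slicing construction $a_i=(a\wedge iu)-(a\wedge(i-1)u)$ together with the identification of the partial sums of any good sequence as $a\wedge iu$ is exactly the proof in the cited source. All the $\ell$-group facts you invoke (translation invariance of $\wedge$, the Riesz-type inequality giving $a_{j+1}\le a_j$ from $a_{j+1}\wedge(u-a_j)=0$, and closure of the disjoint complement under addition of positive elements) are standard and used correctly.
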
\cite[Cor. 3.5]{CM}
\begin{prop}
For every $\ell$-group with strong unit $\langle G,u\rangle$, define the map $\psi_{\langle G,u\rangle}: \langle G,u\rangle\to \Xi\Gamma(G,u)$ by $\psi_{\langle G,u\rangle}(a)=[g(a^+),g(a^-)]$, using the notations of the preceding Lemma.\\
Then $\psi_{\langle G,u\rangle}$ is an isomorphism in $\mathcal{LU}$.
\end{prop}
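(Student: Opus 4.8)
The plan is to reduce everything to the decomposition map of the preceding Lemma. Write $A=\Gamma(G,u)=[0,u]$; since this is an MV-algebra \cite[Prop. 1.1]{CM}, the monoid $M_A$ is cancellative \cite[Prop. 2.1]{CM}, so in $G_A$ one has $[\mathbf{x},\mathbf{y}]=[\mathbf{z},\mathbf{w}]$ iff $\mathbf{x}+\mathbf{w}=\mathbf{y}+\mathbf{z}$, and $[\mathbf{x},\mathbf{y}]\preceq[\mathbf{z},\mathbf{w}]$ iff $\mathbf{x}+\mathbf{w}\leq \mathbf{y}+\mathbf{z}$. The central object is the map $g\colon G^{+}\to M_A$ of the preceding Lemma. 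First I would record that $g$ is an isomorphism of ordered monoids: it is a bijection by the existence and uniqueness in that Lemma, and it satisfies $g(c+d)=g(c)+g(d)$ and $c\leq d\Leftrightarrow g(c)\leq g(d)$ for all $c,d\in G^{+}$ (the classical facts from \cite{CM}). Once $g$ is known to be an ordered-monoid isomorphism, $\psi$ is simply the map it induces between the Grothendieck group $G$ of $G^{+}$ and the Grothendieck group $G_A$ of $M_A$, and the rest is bookkeeping.

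Well-definedness is immediate since $a^{+},a^{-}\in G^{+}$. For the homomorphism property I would compute $\psi(a)+\psi(b)=[g(a^{+})+g(b^{+}),\,g(a^{-})+g(b^{-})]=[g(a^{+}+b^{+}),\,g(a^{-}+b^{-})]$ using additivity of $g$. By the cancellative description of equality in $G_A$ together with the injectivity of $g$, this equals $\psi(a+b)=[g((a+b)^{+}),\,g((a+b)^{-})]$ precisely when $a^{+}+b^{+}+(a+b)^{-}=a^{-}+b^{-}+(a+b)^{+}$ in $G$; but this is a rearrangement of the identity $(a+b)^{+}-(a+b)^{-}=a+b=(a^{+}-a^{-})+(b^{+}-b^{-})$, so it holds.

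Bijectivity follows the same pattern. If $\psi(a)=\psi(b)$, then cancellativity and injectivity of $g$ give $a^{+}+b^{-}=a^{-}+b^{+}$, that is $a=b$; so $\psi$ is injective. For surjectivity, given $[\mathbf{c},\mathbf{d}]\in G_A$ I would take $c,d\in G^{+}$ with $g(c)=\mathbf{c}$ and $g(d)=\mathbf{d}$ (surjectivity of $g$) and check that $\psi(c-d)=[\mathbf{c},\mathbf{d}]$, which again reduces to the tautology $c-d=c-d$. For the order, the cancellative description of $\preceq$ together with the fact that $g$ both preserves and reflects order yields $\psi(a)\preceq\psi(b)\Leftrightarrow g(a^{+}+b^{-})\leq g(a^{-}+b^{+})\Leftrightarrow a^{+}+b^{-}\leq a^{-}+b^{+}\Leftrightarrow a\leq b$; hence both $\psi$ and $\psi^{-1}$ are order preserving, so the bijective group homomorphism $\psi$ is an isomorphism of $\ell$-groups. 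Finally $\psi(u)=[g(u^{+}),g(u^{-})]=[g(u),g(0)]=[(u),(0)]=u_A$ since $u\in G^{+}$, so $\psi$ carries the strong unit to the strong unit and is therefore an isomorphism in $\mathcal{LU}$.

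The only nonformal ingredient, and hence the main obstacle, is the claim that the decomposition map $g$ is additive, $g(c+d)=g(c)+g(d)$; everything else is a mechanical consequence of Grothendieck-group arithmetic and the cancellativity of $M_A$. This additivity is the heart of the classical Mundici correspondence, and I would import it from \cite{CM}, noting that it applies verbatim here because $A=\Gamma(G,u)$ is an honest MV-algebra, so that the functor $\Xi$ restricted to $A$ coincides with the classical inverse Mundici functor.
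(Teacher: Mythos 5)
Your argument is correct and is essentially the same as the paper's: the paper supplies no proof of this proposition at all, simply citing \cite[Cor.~3.5]{CM}, and your reconstruction --- reducing everything to the bijectivity, additivity and order-faithfulness of the decomposition map $g$ on $G^{+}$ together with cancellativity of $M_A$ --- is exactly how that corollary is obtained in \cite{CM}. The only step worth flagging is that dropping the witness $\mathbf{k}$ from the descriptions of $=$ and $\preceq$ in $G_A$ uses, beyond cancellativity, the MV-algebra fact that $\mathbf{x}\leq\mathbf{y}$ in $M_A$ iff $\mathbf{y}=\mathbf{x}+\mathbf{c}$ for some $\mathbf{c}$ \cite[Prop.~2.3.2]{C2}; with that imported alongside the additivity of $g$, every step you list goes through.
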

Next, we describe the best relationship between $\Gamma$ and $\Xi$. For basic terminologies, notations, and facts about functor adjunctions, one should consult for example \cite[Ch. 4]{Sa}\\
First, the functor $\Xi\Gamma$ is naturally equivalent to $1_{\mathcal{LU}}$ \cite[Thm. 3.6]{CM}.\\
In addition, we show that there is a natural transformation $\eta: 1_{\mathcal{BL}}\to \Gamma\Xi$ such that for every morphism $f:L\to \Gamma (G,u)$, there exists a morphism $g:\Xi L\to \langle G,u\rangle $ such that $\Gamma(g)\eta_L=f$.\\
Consider $\eta: 1_{\mathcal{BL}}\to \Gamma\Xi$ described as follows: for every BL-algebra $L$, $\eta_L:L\to \Gamma \Xi L$ is defined by $\eta_L(a)=[(\bar{\bar{a}}),(0)]$. Then $\eta_L$ is well-defined since for every $a\in L$, it is clear that $[(\bar{\bar{a}}),(0)]\in [0,u_L]$. To see that each $\eta_L$ is a homomorphism in $\mathcal{BL}$, we can observe that every morphism $h:MV(L)\to M$ in $\mathcal{MV}$ extends uniquely to a morphism $\bar{f}:L\to M$ in $\mathcal{BL}$. More precisely, $\bar{f}$ is defined by $\bar{f}(a)=f(\bar{\bar{a}})$. Now recall \cite[Thm. 2.12]{CM} that for very MV-algebra $A$, the map $\varphi_A:A\to [0,u_A]$ defined by $\varphi(a)=[(a),(0)]$ is an isomorphism. Then $\eta_L=\iota_A\circ \overline{\varphi_A}$ where $A=MV(L)$ and $\iota_A$ is the inclusion $[0,u_A]\hookrightarrow G_A\hookrightarrow G_L$.
It is straightforward to verify that $\eta$ is a natural transformation.\\
On the other hand, let $f:L\to \Gamma (G,u)$ be a morphism in $\mathcal{BL}$, then $\psi_{\langle G,u\rangle}^{-1}\Xi(f)$ is a morphism from $\Xi L\to \langle G, u\rangle $ in $\mathcal{LU}$. Setting $g=\psi_{\langle G,u\rangle}^{-1}\Xi(f)$, it is readily verified that $\Gamma(g)\eta_L=f$. The only thing missing to obtain an adjunction is the uniqueness of $g$. Unfortunately, $g$ is not unique in general. One might think there could have been a more judicious choice for $\eta$, or on the order of functors. But as we can see in the example below, there is no possible way to get an adjoint pair from these two functors.\\
We will use the following lemma.
\begin{lem} \label{hom}
Let $\mathbf{G}=\langle G,u\rangle $ and $A=\Gamma \mathbf{G}$ be an MV-algebra. Let $L$ be a BL-algebra whose MV-center is the two element Boolean algebra.\\
1. Let $f:A\to L$ be a morphism in $\mathcal{BL}$. Then for every $a\in A$, if $2a\leq u$, then $f(a)=0$ and if $u\leq 2a$, then $f(a)=1$.\\
2. There is a unique morphism $g:L\to A$.
\end{lem}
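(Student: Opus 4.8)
The plan is to exploit the hypothesis that $MV(L)=\mathbf{2}$ to force any BL-homomorphism into or out of an MV-algebra to be essentially two-valued. For part (1), I would first observe that $f$ lands in $MV(L)$: since $A=\Gamma(G,u)$ is an MV-algebra, every $a\in A$ satisfies $a=\bar{\bar a}$, so $f(a)=f(\bar{\bar a})=\overline{\overline{f(a)}}$, and hence $f(a)\in MV(L)=\{0,1\}$. Next I would read off the MV-operations of $A=[0,u]$ from the group, namely $\bar a=u-a$ and $a\otimes a=(2a-u)\vee 0$. If $2a\le u$, then $a\otimes a=0$, so $f(a)\otimes f(a)=f(0)=0$; since $f(a)\in\{0,1\}$ and $1\otimes 1=1$, this forces $f(a)=0$. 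If $u\le 2a$, then $2\bar a=2u-2a\le u$, so applying the first case to $\bar a$ gives $\overline{f(a)}=f(\bar a)=0$, and as $f(a)\in\{0,1\}$ with $\bar 0=1$, this forces $f(a)=1$.

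For the uniqueness in part (2), I would run the same double-negation device in the reverse direction, after isolating two consequences of $MV(L)=\mathbf{2}$: for every $x\in L$, $\bar{\bar x}\in\{0,1\}$ with $\bar{\bar x}=0$ exactly when $x=0$ (using $x\le\bar{\bar x}$), and for every $x\ne 0$, $\bar x=\bar{\bar{\bar x}}=\bar 1=0$. Given any homomorphism $g:L\to A$, the fact that $A$ is an MV-algebra yields $g(x)=\overline{\overline{g(x)}}=g(\bar{\bar x})$; since $\bar{\bar x}\in\{0,1\}$ this determines $g(x)=g(0)=0$ for $x=0$ and $g(x)=g(1)=u$ for $x\ne 0$. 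Thus a homomorphism, if it exists, is unique.

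For existence I would check that the forced map, $g(0)=0$ and $g(x)=u$ for $x\ne 0$, is a BL-homomorphism. Because $\wedge$ and $\vee$ are term-definable from $\otimes$ and $\to$, it suffices to verify preservation of $\otimes$, $\to$, and the constants. The step I expect to be the main obstacle is preservation of $\otimes$ on nonzero arguments, i.e. that $x,y\ne 0$ implies $x\otimes y\ne 0$; this is exactly where $MV(L)=\mathbf{2}$ is indispensable. I would argue by contradiction: $x\otimes y=0$ gives $x\le\bar y$, hence $1=\bar{\bar x}\le\bar{\bar{\bar y}}=\bar y$, forcing $y=0$. Preservation of $\to$ then reduces to routine cases, the only substantive one being $x\ne 0$ and $y=0$, where $g(x\to y)=g(\bar x)=g(0)=0=u\to 0$ by $\bar x=0$; the remaining cases follow from $g(0)=0$, $g(1)=u$, and $y\le x\to y$. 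This completes the verification and the lemma.
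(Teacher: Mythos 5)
Your proof is correct, and both halves rest on the same two observations as the paper's: the image of an MV-algebra under a BL-morphism is an MV-subalgebra, hence lies in $MV(L)=\{0,1\}$; and a morphism out of $L$ into an MV-algebra factors through $x\mapsto\bar{\bar{x}}$. The local arguments differ slightly. In part (1) the paper deduces $f(a)=0$ from $f(u-a)=\overline{f(a)}\ne f(a)$ together with monotonicity applied to $a\le u-a$, whereas you read off $a\otimes a=0$ from $2a\le u$ and use $0\otimes 0=0$, $1\otimes 1=1$ on the two possible values of $f(a)$; both are equally short and valid. In part (2) the paper invokes, without proof, the general bijection $\mbox{Hom}(L,A)\simeq \mbox{Hom}(MV(L),A)$ given by $h(x)=g(\bar{\bar{x}})$ (which rests on the double-negation identities (10)) and leaves implicit that $\mbox{Hom}(\mathbf{2},A)$ is a singleton; you prove uniqueness by the same double-negation device and then verify existence of the forced map directly. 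Your explicit check that $x,y\ne 0$ implies $x\otimes y\ne 0$ when $MV(L)=\mathbf{2}$ is exactly the content that the paper's general statement absorbs, so your version is more self-contained at the cost of a short case analysis.
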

\begin{proof}
1. Since $A$ is an MV-algebra, so is $f(A)$. Thus $f(A)=\{0,1\}$. It follows that, for every $a\in A$, $f(a)\ne f(u-a)$ as $f(u-a)=\overline{f(a)}$. Now, if $2a\leq u$, then $a\leq u-a$, so $f(a)<f(u-a)$, from which we obtain $f(a)=0$. The case $1\leq 2a$ is argued similarly. \\
2. More generally, for any MV-algebra $A$ and any BL-algebra $L$, $Hom(L,A)\simeq Hom(MV(L),A)$. In fact every morphism $g:MV(L)\to A$ extends uniquely to a morphism $h:L\to A$, where $h(x)=g(\overline{\overline{x}})$.

\end{proof}
\begin{ex}
Let $L=[0,1]$ with the product structure and $\mathbf{G}=\langle \mathbb{Z}\times_{lex}\mathbb{R}_{+},(1,1)\rangle$. We know from Proposition \ref{group-p} that $\Xi L=\mathbf{G}$. We claim that there is no bijection between $Hom(\Xi L, \mathbf{G})$ and $Hom(L, \Gamma\mathbf{G})$ and there is no bijection bewteen $Hom(\Gamma \mathbf{G}, L)$ and $Hom(\mathbf{G}, \Xi L)$. In fact, we have the following information about each of the four sets.\\
(i) $Hom(L, \Gamma\mathbf{G})$ is a singleton by Lemma \ref{hom}(2).\\
(ii) $Hom(\Xi L, \mathbf{G})$ has at least two elements: the identity map, and $f:\Xi L\to \mathbf{G}$ defined by $f(m,x)=(m,1)$.\\
(iii) $Hom(\Gamma \mathbf{G}, L)$ is a singleton. First, note that $\Gamma \mathbf{G}=[(0,1);(1,1)]=\{0\}\times [1,\infty) \biguplus \{1\}\times (0,1]$. On the other hand, suppose that $f:\Gamma \mathbf{G}\to L$ is a morphism. Then by Lemma \ref{hom}(1), $f(\{0\}\times [1,\infty))=\{0\}$ and $f(\{1\}\times (0,1])=\{1\}$. In fact, one can easily verify that the map $f:\Gamma \mathbf{G}\to L$ defined by:
$$f(m,x) =
\left\{\begin{array}{ll}
  0 & ,\ \ \mbox{if} \ \ (m,x)\in \{0\}\times [1,\infty)\\
 1 &  ,\ \ \mbox{if}\ \ (m,x)\in \{1\}\times (0,1]
\end{array}\right.$$
is a morphism.\\
(iv) $Hom(\mathbf{G}, \Xi L)=Hom(\Xi L, \mathbf{G})$, which by case (ii) has at least two elements.
\end{ex}
\section{Conclusion and final Remarks}
The main goal of this work was to construct a group associated to any BL-algebra, that generalizes the well-known Chang's $\ell$-group of an MV-algebra. This was achieved using mostly the same techniques used by the authors of \cite{CM}, but several proofs had to be reinvented completely in the BL-algebras setting. One can summarize most of the work by the functor $\Xi$ from the category $\mathcal{BL}$ of BL-algebras to the category $\mathcal{LU}$ of Abelian $\ell$-groups with strong units. There are at least two aspects of the study of BL-algebras that could be enhanced with the present work. On one hand, the failure of $\Xi$ to be a functor offers a new measure of the understanding of the gap between MV-algebras and BL-algebras. On the other hand, the functor $\Xi$ can be used to investigate classification problems of BL-algebras, just like the homology (and homotopy) are used to classify topological spaces. For example, we saw in Proposition \ref{godel} that the Chang's $\ell$-group of any linearly ordered G\"odel algebra is isomorphic to the $o$-group $\mathbb{Z}$. One could ask if this characterizes completely the G\"odel BL-chains. We plan to use the fucntor $\Xi$ to investigate isomorphism problems of that type and more.

\end{document}